\def \sgn{{\rm sgn} }
\newtheorem{theorem}{Theorem}
\newtheorem{assumption}{Assumption}
 \newtheorem{example}{Example}
\newtheorem{remark}{Remark}
\newtheorem{definition}{Definition}
\newtheorem{proposition}{Proposition}
\newtheorem{corollary}{Corollary}
\begin{document}
\title{\bf \Large{DC algorithms for a class of sparse group $\ell_0$ regularized optimization problems}}
\author{Wenjing Li\footnotemark[1], \quad Wei Bian \footnotemark[2], \quad Kim-Chuan Toh\footnotemark[3]}

\date{}
\maketitle

\renewcommand{\thefootnote}{\fnsymbol{footnote}}
\footnotetext[1]{Department of Mathematics, National University of Singapore, 10 Lower Kent Ridge Road, Singapore ({\tt liwenjingsx@163. com}). The research of this author was supported by the China Scholarship Council while visiting the National University of Singapore.}
\footnotetext[2]{Corresponding author. School of Mathematics, Harbin Institute of Technology, Harbin 150001, China ({\tt bianweilvse520 @163.com}). The research of this author is supported in part by the National Natural Science Foundation of China under Grant 11871178 and Grant 61773136.}
\footnotetext[3]{Department of Mathematics and Institute of Operations Research and Analytics, National University of Singapore, 10 Lower Kent Ridge Road, Singapore ({\tt mattohkc@nus.edu.sg}). The research of this author is supported by the Ministry of Education, Singapore, under its Academic Research Fund Tier 3 grant call (MOE-2019-T3-1-010).}
\renewcommand{\thefootnote}{\arabic{footnote}}

\begin{abstract}
In this paper, we consider a class of sparse group $\ell_0$ regularized optimization problems. Firstly, we give a continuous relaxation model of the considered problem and define a class of stationary points of the relaxation problem. Then, we establish the equivalence of these two problems in the sense of global minimizers, and prove that the defined stationary point is equivalent to the local minimizer of the considered sparse group $\ell_0$ regularized problem with a desirable bound from its global minimizers. Further, based on the difference-of-convex (DC) structure of the relaxation problem, we design two DC algorithms to solve the relaxation problem. We prove that any accumulation point of the iterates generated by them is a local minimizer with a desirable bound for the considered sparse group $\ell_0$ problem. In particular, all accumulation points have a common support set and their zero entries can be attained within finite iterations. Moreover, we give the global convergence analysis of the proposed algorithms. Finally, we perform some numerical experiments to show the efficiency of the proposed algorithms.
\end{abstract}

\medskip
\noindent
{\bf Keywords:} sparse group $\ell_0$ regularization, continuous relaxation, DC algorithm, global convergence
\\[5pt]
{\bf AMS subject classification:} 90C46, 90C30, 65K05

\section{Introduction}
Over the last decade, sparse optimization problems have attracted a great deal of attention in science and engineering, such as variable selection, image restoration and wireless communication. The main purpose of these problems is to seek a sparsest solution of an underdetermined linear or nonlinear system of equations. A typical example is to recover a sparse signal $x\in\mathbb{R}^n$ from observation $b\in\mathbb{R}^m$ by considering the linear system $y=Ax+\epsilon$ with sensing matrix $A\in\mathbb{R}^{m\times n}$ and observation error $\epsilon\in\mathbb{R}^m$. Define $\mathcal{I}:\mathbb{R}\rightarrow\mathbb{R}$ by $\mathcal{I}(t)=1$ if $t\neq0$ and $\mathcal{I}(t)=0$ otherwise. The sparsity of vector $x$ on $\mathbb{R}^n$ is usually provided by its $\ell_0$-norm, denoted by $\|\cdot\|_0$, and defined by \begin{equation*}
	\|x\|_0=\sum_{i=1}^{n}\mathcal{I}(x_i).
\end{equation*} A vector $x\in\mathbb{R}^n$ is said to be sparse if $\|x\|_0\ll n$. $\ell_0$-norm plays a crucial role in sparse optimization problems as it can directly penalize the number of nonzero elements and 
promote the accurate identification of important predictors \cite{Nikolova2016Relationship}. However, the discontinuity of $\ell_0$-norm makes the nonconvex optimization problems involving $\ell_0$-norm highly challenging \cite{LeThi2015}. The $\ell_0$ regularized optimization problem usually takes this form $\min_{x\in\mathbb{R}^n}f(x)+\lambda\|x\|_0$,
where $f$ is a given loss function and $\lambda$ is a hyperparameter characterizing the trade-off between 
the loss defined by $f$ and the sparsity of $x$. In view of different application backgrounds, the function $f$ has a variety of possible expressions \cite{LeThi2015}. However, such problems are well known to be NP-hard in general \cite{Natarajan1995}.

Group-sparsity is an important class of structured sparsity and is referred to as block-sparsity in compressed sensing \cite{Duarte2011}. Group-sparsity is imperative in many cases and can improve the performance of a larger family of regression problems \cite{Eldar2009,Jenatton2011,Stojnic2009}. When the data has a certain group sparse structure and its variables are also sparse, we are naturally interested in selecting both important groups and important variables in the selected groups. In order to expand the application fields of $\ell_0$ regularized optimization problems, we study the following more general $\ell_0$ sparse group optimization problems with box constraints in this paper:
\begin{equation}\label{g0p}
\min_{x\in\Omega}~F_0(x):=f(x)+\lambda_1\|x\|_0+\lambda_2\sum_{l=1}^{L}w_l\mathcal{I}(\|x_{(l)}\|_p),
\end{equation}
where $\Omega=[\underline{u},\bar{u}]$ with $-\underline{u},\bar{u}\in\overline{\mathbb{R}^n_+}$ and $\underline{u}<\bar{u}$, $f:\mathbb{R}^n\rightarrow\mathbb{R}$ is convex, $\lambda_1>0$, $\lambda_2\geq0$, $p=1$ or $2$, $w_l\geq0$ and $x_{(l)}\in\mathbb{R}^{|G_l|}$ is the restriction of $x$ to the index set $G_l\subseteq\{1,\ldots,n\}$ for $l\in\{1,\ldots,L\}$. Without loss of generality, we suppose that $\bigcup_{l=1}^LG_l=\{1,\ldots,n\}$. Note that $\|x\|_0=\sum_{l=1}^{n}\mathcal{I}(x_i)$ and $\|(\|x_{(1)}\|_p,\ldots,\|x_{(L)}\|_p)^\top\|_0=\sum_{l=1}^{L}\mathcal{I}(\|x_{(l)}\|_p)$, which is known as $\ell_{p,0}$ norm. In problem (\ref{g0p}), the setting of group-sparsity involves some possibly overlapping groups. The box constraints have also been considered in \cite{Bian2020,Chen2012,Yap2016} and shown to be beneficial to the recovery of some images than without such constraints \cite{Chen2012}. When $p=2$ and $f(x)=\|Ax-b\|^2_2$ with $A\in\mathbb{R}^{m\times n}$ and $b\in\mathbb{R}^m$, problem (\ref{g0p}) is the $\ell_0$ counterpart of the sparse group Lasso problem presented in \cite{Simon2013}, which has been widely studied and applied to different fields \cite{Eldar2009,Zhang2020MP,Zhou2017}. Problem (\ref{g0p}) with $\lambda_2=0$ has been extensively studied in \cite{Bian2020,LeThi2015,Soubies2015,Soubies2017}. When $f$ is the squared-error loss, problem (\ref{g0p}) with $p=2$ and $\Omega=\overline{\mathbb{R}^n_+}$ was applied to the diffusion magnetic resonance imaging problems in \cite{Yap2016}, where the non-monotone iterative hard thresholding algorithm was proposed to solve (\ref{g0p}) and proved to be convergent to a local minimizer of (\ref{g0p}). Moreover, when $f$ is the negative log-likelihood and $\Omega$ is convex, problem (\ref{g0p}) was studied for the estimation of multiple covariance matrices in \cite{Phan2017DC}, where two DC algorithms were proposed to solve a relaxation problem of (\ref{g0p}) with $p=1$ and $p=2$, respectively. In \cite{Pan2021}, Pan and Chen considered the constrained group sparse optimization with the objective function defined by the $\ell_{2,0}$ function $\sum_{l=1}^{L}\mathcal{I}(\|x_{(l)}\|_2)$ and established the equivalence of their penalty models to the relaxation models in the sense of global minimizers. More interesting results on group $\ell_0$ regularized or constrained optimization problems can be found in \cite{Beck2019MP,Phan2019DC,Phan2017cof}.

There exist some relaxation methods proposed to solve the $\ell_0$ regularized models. $\ell_1$-norm is the most commonly used convex relaxation of $\ell_0$-norm. Due to the wide variety of algorithms available for convex optimization problems, $\ell_1$ regularized problems have been studied in many applications \cite{Lv2013,Wright2009}. With the advent of the Big Data era, some highly efficient algorithms were proposed for solving large-scale $\ell_1$ regularized problems \cite{Li2018SP,Lin2019SP,Zhang2020MP}. On the other hand, $\ell_1$-norm often leads to over-relaxation and produces a biased estimator \cite{Candes2008,Fan2001}. For further improvement, some researchers designed continuous but nonconvex relaxations for the $\ell_0$-norm, such as the $\ell_p$-norm ($0<p<1$) \cite{Foucart2009}, capped-$\ell_1$ penalty \cite{Peleg2008}, smoothly clipped absolute deviation (SCAD) penalty \cite{Fan2001}, minimax concave penalty (MCP) \cite{Zhang2010}, continuous exact $\ell_0$ (CEL0) penalty \cite{Soubies2015}. Most of these relaxations can be recast into Difference of Convex (DC) functions \cite{Ahn2017}, which refers to the difference of two convex functions. These DC structured relaxation problems belong to DC minimization (which refers to the problem of minimizing DC functions). For the $\ell_0$ regularization, the capped-$\ell_1$ relaxation was considered in \cite{Bian2020,LeThi2015} and has been shown to be the tightest DC relaxation for the $\ell_0$-norm \cite{LeThi2015}. Accordingly, we use the capped-$\ell_1$ function $\theta(t)=\min\big\{{|t|}/{\nu},1\big\}$ with a given $\nu>0$ to relax $\mathcal{I}(t)$. Define
\begin{equation*}
\bar{f}(x):=f(x)+\frac{\lambda_1}{\nu}\|x\|_1+\frac{\lambda_2}{\nu}\sum_{l=1}^{L}w_l\|x_{(l)}\|_p~\mbox{and}~\theta(t)=|t|/\nu-\underset{i=1,2,3}\max\{\theta_i(t)\},
\end{equation*}where $\theta_1(t)=0$, $\theta_2(t)=t/\nu-1$, $\theta_3(t)=-t/\nu-1$. Then, we consider the following DC minimization as the relaxation of problem (\ref{g0p}):
\begin{equation}\label{g0ps}
\begin{split}
\min_{x\in\Omega}~F(x):=\bar{f}(x)-\lambda_1\sum_{j=1}^{n}\underset{i=1,2,3}\max\{\theta_i(x_j)\}-\lambda_2\sum_{l=1}^{L}w_l~\underset{i=1,2}\max\{\theta_i(\|x_{(l)}\|_p)\}.
\end{split}
\end{equation}
For the (group) $\ell_0$ regularized optimization problems, there exist some equivalent DC relaxation models in the sense of global minimizers \cite{Bian2020,LeThi2015,Phan2019DC,Soubies2015}. When $\lambda_2=0$, Bian and Chen \cite{Bian2020} proved the equivalence between a class of strong local minimizers of (\ref{g0p}) with box constraints and weak d-stationary points (lifted stationary points) of (\ref{g0ps}) defined based on \cite{Pang2017}. In this paper, we will consider problem (\ref{g0p}) with $\lambda_2\neq0$, which has interesting applications in signal processing, gene expression and analysis, and neuroimaging. On the other hand, for problem (\ref{g0p}) with $w_l=1$ and $G_{i}\cap G_{j}=\emptyset,1\leq i<j\leq L$, the capped-$\ell_1$ relaxation model (\ref{g0ps}) was also considered in \cite{Phan2017DC}, but the equivalence to (\ref{g0p}) has not been proved. So the another purpose of this paper is to show the equivalence between problem (\ref{g0p}) and its relaxation problem (\ref{g0ps}) in the sense of global minimizers.

A natural way to solve a DC minimization problem is by using DC algorithm. DC algorithms have been studied extensively for more than three decades \cite{LeThi2018MP}. For the general DC minimization $\min_{x\in\mathbb{R}^n}h(x)-g(x)$ with convex functions $h$ and $g$, the classical DC algorithm generates the next iterate by solving the convex optimization problem $x^{k+1}\in{\rm{argmin}}_{x\in\mathbb{R}^n}\{h(x)-\langle v^k,x\rangle\}$ for some $v^k\in\partial g(x^k)$. Recently, DC algorithm has been further developed for improving the quality of solutions and the rate of convergence \cite{Lu2019,Lu2019MP,Pang2017,Wen2018}. Most existing DC algorithms were proved to be subsequential convergent to a critical point of DC minimization for the case that $g$ is nonsmooth \cite{Francisco2020,Gotoh2018,Wen2018}. When the subtracted function is defined by $g=\max_{1\leq i\leq I}\psi_i(x)$ with convex and continuously differentiable $\psi_i$, by exploiting the structure of the subtracted function in DC minimization, Pang, Razaviyayn, and Alvarado \cite{Pang2017} proposed an enhanced DC algorithm to solve DC minimization with subsequential convergence to a d-stationary point of the considered problem. Further, Lu, Zhou and Sun \cite{Lu2019,Lu2019MP} combined the enhanced DC algorithm with some possible accelerated methods to design some DC algorithms with subsequential convergence to the d-stationary points. In problem (\ref{g0ps}), the subtracted part is the maximum of some convex functions. But based on the nondifferentiablity of $\ell_1$-norm and $\ell_2$-norm, the DC algorithms in \cite{Lu2019,Lu2019MP,Pang2017} cannot be used to solve the problem (\ref{g0ps}) directly. 
Considering the special structure of the subtracted function in (\ref{g0ps}) and inspired by the ideas in \cite{Lu2019,Lu2019MP,Pang2017}, we will explore the structure of the relaxation function to design two DC algorithms and get a stationary point satisfying a stronger optimality condition than the critical points for problem (\ref{g0ps}). In addition, though some DC algorithms are proposed to solve the relaxation problems of group $\ell_0$ regularized problems \cite{Phan2019DC,Phan2017DC}, the relationship between the proposed DC algorithms and local minimizers of the group $\ell_0$ regularized problems is not rigorously explained. In this paper, we will analyze this relationship.

In terms of global convergence analysis, there exists only a few theoretical results for the $\ell_0$ regularized optimization problems \cite{Attouch2013,Bian2020,Lu2014,Zhou2020}. Recently, for problem (\ref{g0p}) with $\lambda_2=0$, Bian and Chen \cite{Bian2020} proved the global convergence of the proposed algorithm and its convergence rate of $o(k^{-\tau})$ with $\tau\in(0,\frac{1}{2})$ on the objective function values. Then, Zhou, Pan and Xiu \cite{Zhou2020} developed a Newton-type method with global and quadratic convergence when $f$ is twice continuously differentiable and locally strongly convex around an accumulation point. Moreover, the global convergence analysis of DC algorithms mainly relies on the KL assumptions \cite{Liu2018DC,Lu2019MP,Wen2018}. Therefore, another main purpose of this paper is to propose two algorithms with global convergence and a faster convergence rate for problem (\ref{g0p}) under a proper KL assumption. In particular, we will show that the proposed KL assumption is naturally satisfied for some common loss functions in sparse regression problems.

To sum up, the main contributions of this paper are as follows.
\begin{itemize}
\item Define a class of stationary points for relaxation problem (\ref{g0ps}), which satisfies a stronger optimality condition than weak d-stationary points and critical points of (\ref{g0ps}). For problem (\ref{g0ps}) with $\lambda_2=0$, the defined stationary point is equivalent to the weak d-stationary point in \cite{Pang2017}.
\item Generalize the definition of strong local minimizer in \cite{Bian2020} from problem (\ref{g0p}) with $\lambda_2=0$ to problem (\ref{g0p}), which satisfies a desirable property of its global minimizers. Prove the equivalence between problem (\ref{g0p}) and its relaxation problem (\ref{g0ps}) in the sense of global minimizers, and the equivalence between the defined stationary point of (\ref{g0ps}) and the strong local minimizer of (\ref{g0p}) under a weaker restriction on $\nu$ than that in \cite{Bian2020}.
\item Design two DC algorithms to obtain the strong local minimizers of (\ref{g0p}). Prove that all accumulation points of the iterates generated by the proposed algorithms have a common support set and a unified lower bound for the nonzero entries, and their zero entries can be attained within finite iterations.
\item Prove the global convergence and convergence rate of the iterates generated by the proposed algorithms under some proper assumption on the loss function, where the R-linear convergence is appropriate for (\ref{g0p}) with some common loss functions in linear, logistic and Poisson regression.
\end{itemize}

We organize the remaining part of this paper as follows. In Section \ref{section2}, we define a stationary point of relaxation problem (\ref{g0ps}) and a strong local minimizer of primal problem (\ref{g0p}), and analyze their equivalence. In Section \ref{section3}, we propose two DC algorithms to solve problem (\ref{g0ps}) and prove that all accumulation points of the proposed algorithms are strong local minimizers of problem (\ref{g0p}), have a common support set and finite iteration convergence on zero entries. In Section \ref{section4}, we analyze the global convergence and convergence rates of the proposed algorithms. Finally, some numerical examples are given in Section \ref{section6} to verify the theoretical results and show the good performance of the proposed algorithms in solving problem (\ref{g0p}).

\textbf{Notations:} $\overline{\mathbb{R}^n_+}:=[0,\infty]^n$. For $d\in\{1,2,\ldots\}$, $[d]:=\{1,\ldots,d\}$. For $x\in\mathbb{R}^n$, $\|x\|:=\|x\|_2$ denotes the Euclidean norm, $\mathcal{A}(x):=\{j\in[n]:x_j=0\}$, $\mathcal{A}^c(x):=\{j\in[n]:x_j\neq0\}$, ${B}_{\delta}(x)$ denotes the open ball in $\mathbb{R}^n$ centered at $x$ with radius $\delta>0$. Denote $\textbf{k}=(k,k,\ldots,k)\in\mathbb{R}^n$. For $x\in\mathbb{R}^n$ and $\Pi\subseteq[n]$, $|\Pi|$ denotes the number of elements in $\Pi$,
$x_{\Pi}:=(x_{i_1},x_{i_2},\ldots,x_{i_{|\Pi|}})$ with $i_1,i_2,\ldots,i_{|\Pi|}\in\Pi$ and $i_1<i_2<\ldots<i_{|\Pi|}$. For $S\subseteq\mathbb{R}^n$, $\delta_S(x)=0$ if $x\in S$ and $\delta_S(x)=\infty$ otherwise. For any $t\in\mathbb{R}$, $\lfloor t\rfloor_+:=\max\{t,0\}$ and $\lfloor t\rfloor$ denotes the largest nonnegative integer not exceeding $t$ when $t\geq0$. For a convex function $h:\mathbb{R}^n\rightarrow(-\infty,\infty]$, ${\rm{dom}}h=\{x\in\mathbb{R}^n:h(x)<\infty\}$, $\partial h$ denotes the subdifferential of $h$ \cite{Rockafellar1997}, and the proximal operator of $h$, denoted by ${\rm{prox}}_h$, is the mapping from $\mathbb{R}^n$ to $\mathbb{R}^n$ defined by ${\rm{prox}}_h(z)={\rm{argmin}}_{x\in\mathbb{R}^n}\{h(x)+\frac{1}{2}\|x-z\|^2\}$.
\section{Relationships between (\ref{g0p}) and (\ref{g0ps})}\label{section2}
In this section, we firstly define a class of stationary points of the relaxation problem (\ref{g0ps}) and analyze its lower bound property for the nonzero entries. Based on this property, we prove the equivalence between problem (\ref{g0p}) and its relaxation problem (\ref{g0ps}) in the sense of global minimizers and optimal values. Then, based on this equivalence, we define a class of strong local minimizers for problem (\ref{g0p}). Finally, we prove the equivalence between the defined stationary point of (\ref{g0ps}) and the strong local minimizer of (\ref{g0p}).

In order to define a beneficial stationary point of problem (\ref{g0ps}) and build up the relationships between problems (\ref{g0p}) and (\ref{g0ps}), we make the following assumption throughout this paper.
\begin{assumption}\label{gsassum}
(i) $f$ is global Lipschitz continuous on $\Omega$.\\
(ii) $\nu$ in (\ref{g0ps}) satisfies $\nu<\min\{\frac{\lambda_1}{L_f},\vartheta\}$, where $L_f$ is a constant satisfying $L_f\geq\sup\{|\xi_j|:\xi\in\partial f(x), x\in\Omega,j\in[n]\}$ and $\vartheta:=\min\{|\underline{u}_i|,\bar{u}_j:\underline{u}_i\neq0,\bar{u}_j\neq 0,i,j\in[n]\}$.
\end{assumption}

Note that $\nu$ in Assumption \ref{gsassum} is not affected by $\lambda_2$ in (\ref{g0p}). Moreover, $L_f$ in Assumption \ref{gsassum} can be less than the Lipschitz modulus of $f$ on $\Omega$, which implies a weaker restriction on $\nu$ than Assumption 2 in \cite{Bian2020} for (\ref{g0p}) with $\lambda_2=0$.

Firstly, we define a lower bound property for $x\in\mathbb{R}^n$.
\begin{definition}\label{lowerbd}
A vector $x\in\mathbb{R}^n$ is said to have the $\nu$-lower bound property if one has $|x_j|\geq \nu$ when $x_j\neq 0$, for any $j\in[n]$.
\end{definition}

Recall the definitions of three types of stationary points, as mentioned in \cite{Pang2017},  for the DC relaxation problem (\ref{g0ps}). In order to properly formulate the stationary points of problem (\ref{g0ps}), we define some necessary notations.

For $x\in\mathbb{R}^n$, define 
\begin{eqnarray*}
\bar{I}(x)&=&\{I\in\{1,2,3\}^n:{I_j}\in{{{\rm{arg}}\max}_{i\in\{1,2,3\}}}\{\theta_i(x_j)\},j\in[n]\}, 
\\[0pt]
\bar{J}(x)&=&\{J\in\{1,2\}^L:{J_l}\in{{{\rm{arg}}\max}_{i\in\{1,2\}}}\{\theta_i(\|x_{(l)}\|_p)\},l\in[L]\},
\\[0pt]
\Theta_{I,J}(x)&=&\lambda_1\sum_{j=1}^{n}\theta_{I_j}(x_j)+\lambda_2\sum_{l=1}^{L}w_l\theta_{J_l}(\|x_{(l)}\|_p) 
\quad\mbox{with}\quad I\in\bar{I}(x), J\in\bar{J}(x).
\end{eqnarray*} 
Combining with the above notations, we show the following definitions.
\begin{enumerate}
\item[(a)] $x\in\Omega$ is called a critical point of problem (\ref{g0ps}), if $x$ satisfies that
\begin{equation*}\label{critstat}
\begin{split}
\bm{0}\in&\partial\bar{f}(x)-\partial\Big(\lambda_1\sum_{j=1}^{n}\underset{i=1,2,3}\max\theta_i(x_j)+\lambda_2\sum_{l=1}^{L}w_l\underset{i=1,2}\max\theta_i(\|x_{(l)}\|_p)\Big)+N_{\Omega}(x).
\end{split}
\end{equation*}
	
\item[(b)] $x\in\Omega$ is called a weak d(directional)-stationary point of problem (\ref{g0ps}), if there exist $I\in\bar{I}(x)$ and $J\in\bar{J}(x)$ such that $\partial\Theta_{I,J}(x)\subseteq\partial\bar{f}(x)+N_{\Omega}(x)$.
	
\item[(c)] $x\in\Omega$ is called a d-stationary point of problem (\ref{g0ps}), if $x$ satisfies
\begin{equation*}\label{dstat}
\partial\Theta_{I,J}(x)\subseteq\partial\bar{f}(x)+N_{\Omega}(x),\;\;\forall I\in\bar{I}(x), \forall J\in\bar{J}(x).
\end{equation*}
\end{enumerate}

Next, we define the special index vectors in $\bar{I}(x)$ and $\bar{J}(x)$ as follows.
\begin{eqnarray*}
&I^x&~\mbox{is the vector in}\;\; \{1,2,3\}^n\mbox{ satisfying }I^x_j=\max\{{{{\rm{arg}}\max}_{i\in\{1,2,3\}}}\{\theta_i(x_j)\}\big\},j\in[n],
\\[0pt]
&J^x&~\mbox{is the vector in}\;\; \{1,2\}^L\mbox{ satisfying }J^x_l=\max\big\{{{{\rm{arg}}\max}_{i\in\{1,2\}}}\{\theta_i(\|x_{(l)}\|_p)\}\big\},l\in[L].
\end{eqnarray*}
Note that $I^x$ and $J^x$ in $\Theta_{I^x,J^x}$ are regarded as fixed index vectors, not as variables, and they correspond to element and group sparsities, respectively. In order to keep the derivative information of $\mathcal{I}$ at $\nu$ and $-\nu$, we choose the outer pieces of $\theta$, as the definitions of $I^x$ and $J^x$. The inner pieces of $\theta$ are also applicable to the theoretical analysis of this section, provided that $|x_j|\geq\nu$ in Definition \ref{lowerbd} is replaced by $|x_j|>\nu$.

Combining the above definitions and the effect of problem (\ref{g0ps}) for solving (\ref{g0p}), we define a class of stationary points for problem (\ref{g0ps}).
\begin{definition}\label{gssdef}
We call $x\in\Omega$ an sw(strong weak)-d-stationary point of problem (\ref{g0ps}), if $x$ satisfies the property that
\begin{equation}\label{swdstat}
\lambda_1\sum_{j=1}^{n}\nabla\theta_{I^x_j}(x_j)+\lambda_2\sum_{l=1}^{L}w_l\partial\theta_{J^x_l}(\|x_{(l)}\|_p)\subseteq\partial\bar{f}(x)+N_{\Omega}(x).
\end{equation}
\end{definition}

Let $\mathcal{S}_c$, $\mathcal{S}_d$, $\mathcal{S}_{wd}$ and $\mathcal{S}_{swd}$ denote the sets of critical points, d-stationary points, weak d-stationary points, and the defined sw-d-stationary points of problem (\ref{g0ps}), respectively. We can easily find that 
they satisfy the following inclusions \begin{equation*}
\mathcal{S}_d\subseteq\mathcal{S}_{swd}\subseteq\mathcal{S}_{wd}\subseteq\mathcal{S}_c.
\end{equation*}It is known that any local minimizer of problem (\ref{g0ps}) is a d-stationary point of it, which is of course an sw-d-stationary point of it. By \cite[Proposition 2.2]{Bian2020}, the sw-d-stationary point in Definition \ref{gssdef} is equivalent to the weak d-stationary point for (\ref{g0ps}) with $\lambda_2=0$. However, the weak d-stationary point is not sufficient to be an sw-d-stationary point of (\ref{g0ps}) when $\lambda_2>0$. For example, $(\frac{1}{4},0)$ is a weak d-stationary point of problem (\ref{g0ps}) with $\nu=\frac{1}{4}$ for the problem $\min_{x:=(x_1,x_2)^\top\in[0,1]^2}|x_1-\frac{1}{4}|-\frac{3}{2}x_1+x_2^2+\|x\|_0+\frac{1}{8}\mathcal{I}(\|x\|_p),p=1,2$, but it is not an sw-d-stationary point of it. Moreover, any sw-d-stationary point $x$ of (\ref{g0ps}) is a d-stationary point of (\ref{g0ps}) if $\|x_{(l)}\|_p\neq\nu,\forall l\in[L]$ when $\lambda_2>0$, and $|x_j|\neq\nu,\forall j\in[n]$.

In what follows, we build up the relationships of problem (\ref{g0p}) and (\ref{g0ps}), where the first step is to prove $\nu$-lower bound property for sw-d-stationary points of (\ref{g0ps}).
\begin{proposition}\label{gslbdp}
Any sw-d-stationary point of (\ref{g0ps}) has $\nu$-lower bound property, and for any $l\in[L]$, $\|x_{(l)}\|_p\geq \nu$ if $x_{(l)}\neq \bm{0}$.
\end{proposition}
\begin{proof}
Let $x\in\Omega$ be an arbitrary sw-d-stationary point of (\ref{g0ps}). If the statement in this proposition does not hold, then there exists a $j\in [n]$ such that $|x_{j}|\in(0,\nu)$. It follows from the definition of $\theta$ and Assumption \ref{gsassum} that $I^x_{j}={1}$ and $N_{\Omega_{j}}(x_{j})=\{0\}$. This implies that
\begin{equation}\label{gslbd}
0\in[\partial f(x)]_{j}+\frac{\lambda_1}{\nu}\sgn(x_{j})+\lambda_2\sum_{{l}\in\bar{\mathcal{L}}_{j}}\frac{ w_{{l}}}{\nu}\nabla_{x_{j}}\|x_{({l})}\|_p,
\end{equation}
where $\bar{\mathcal{L}}_{j}:=\{l: j\in G_{l},\|x_{(l)}\|_p<\nu\}$. Since $\big|\nabla_{x_{j}}|x_{j}|\big|=1$ and $\sum_{{l}\in\bar{\mathcal{L}}_{j}}\frac{ w_{{l}}}{\nu}\nabla_{x_{j}}\|x_{({l})}\|_p$ has the same sign as $x_{j}$, by Assumption \ref{gsassum}, one has that the relationship of (\ref{gslbd}) does not hold, which leads to a contradiction. Therefore, for any $j\in[n]$, one has $x_j=0$ if $|x_j|<\nu$, which means that $x$ has $\nu$-lower bound property. Then, for any $l\in[L]$, $\|x_{(l)}\|_p\geq \nu$ if $x_{(l)}\neq \bm{0}$.
\end{proof}

Based on Proposition \ref{gslbdp},  we analyze the equivalence of problem (\ref{g0p}) and problem (\ref{g0ps}) in the sense of global minimizers and optimal values. The proof idea is similar to that for Theorem 2.4 in \cite{Bian2020}.
\begin{proposition}\label{gsglb=}
The optimal solution sets and optimal values of sparse group optimization problem (\ref{g0p}) and its continuous relaxation problem (\ref{g0ps}) are same, respectively.
\end{proposition}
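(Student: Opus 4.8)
The plan is to compare $F$ and $F_0$ pointwise on $\Omega$ and then to show, by means of a truncation map, that the region where $F$ strictly dominates $F_0$ cannot meet the set of global minimizers of either problem; the bridge between the two is Assumption \ref{gsassum}(ii). First I would record two elementary facts. Since $\theta(t)=\min\{|t|/\nu,1\}\le\mathcal I(t)$ for every $t\in\mathbb R$ and $F(x)=f(x)+\lambda_1\sum_{j}\theta(x_j)+\lambda_2\sum_{l}w_l\theta(\|x_{(l)}\|_p)$, we obtain $F(x)\le F_0(x)$ for all $x\in\Omega$, hence $\inf_\Omega F\le\inf_\Omega F_0$. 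Moreover, if $x\in\Omega$ has the $\nu$-lower bound property of Definition \ref{lowerbd}, then every nonzero coordinate $x_j$ satisfies $\theta(x_j)=1=\mathcal I(x_j)$, and, since $\|x_{(l)}\|_p\ge\nu$ whenever $x_{(l)}\ne\bm0$, also $\theta(\|x_{(l)}\|_p)=1=\mathcal I(\|x_{(l)}\|_p)$; thus $F(x)=F_0(x)$ on every such $x$.

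The crux is a truncation estimate. Given $x\in\Omega$, set $T(x)=\{j\in[n]:0<|x_j|<\nu\}$ and let $\tilde x$ coincide with $x$ off $T(x)$ and vanish on $T(x)$; since $0\in[l_j,u_j]$ (because $-l,u\in[0,\infty]^n$) we have $\tilde x\in\Omega$, and $\tilde x$ has the $\nu$-lower bound property. I claim $F(\tilde x)\le F(x)$, with strict inequality whenever $T(x)\ne\emptyset$. Indeed, in passing from $x$ to $\tilde x$ the coordinate term decreases by exactly $\tfrac{\lambda_1}{\nu}\sum_{j\in T(x)}|x_j|$ (each such $\theta(x_j)=|x_j|/\nu$ becomes $0$); each group term $w_l\theta(\|x_{(l)}\|_p)$ does not increase, since $\|\tilde x_{(l)}\|_p\le\|x_{(l)}\|_p$ and $\theta$ is nondecreasing in its modulus; and $f(\tilde x)-f(x)\le L_f\|\tilde x-x\|_1=L_f\sum_{j\in T(x)}|x_j|$, because $f$ is convex with subgradient components bounded in absolute value by $L_f$ on $\Omega$. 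Combining, $F(\tilde x)-F(x)\le(L_f-\tfrac{\lambda_1}{\nu})\sum_{j\in T(x)}|x_j|\le0$, and the inequality is strict when $T(x)\ne\emptyset$ by Assumption \ref{gsassum}(ii), which gives $\nu<\lambda_1/L_f$.

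With these in hand the conclusion follows. From the truncation step, $F_0(\tilde x)=F(\tilde x)\le F(x)$ for every $x\in\Omega$, so $\inf_\Omega F_0\le\inf_\Omega F$; together with the first paragraph this yields $\inf_\Omega F=\inf_\Omega F_0$. If $x^*$ is a global minimizer of $F_0$, then $F(x^*)\le F_0(x^*)=\inf_\Omega F_0=\inf_\Omega F$, forcing $F(x^*)=\inf_\Omega F$, i.e.\ $x^*$ is a global minimizer of $F$. Conversely, if $\bar x$ is a global minimizer of $F$, then its minimality together with $F(\tilde{\bar x})\le F(\bar x)$ forces $T(\bar x)=\emptyset$ (otherwise the inequality would be strict), so $\bar x$ has the $\nu$-lower bound property and $F_0(\bar x)=F(\bar x)=\inf_\Omega F=\inf_\Omega F_0$, i.e.\ $\bar x$ is a global minimizer of $F_0$. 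This is consistent with Proposition \ref{gslbdp}, since a global minimizer of $F$ is in particular an sw-d-stationary point and hence automatically enjoys the $\nu$-lower bound property.

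The main obstacle is the truncation estimate: one must check that zeroing the small coordinates lets $f$ grow by at most $L_f$ times the $\ell_1$-mass removed — which is precisely where the coordinatewise subgradient bound (rather than mere Lipschitz continuity) and the constraint $\nu<\lambda_1/L_f$ enter — while simultaneously verifying that the group penalties can only shrink under the truncation. Once that is secured, the remaining steps are the bookkeeping of Theorem 2.4 in \cite{Bian2020}, the extra group terms being harmless because $\theta(\|\cdot\|_p)$ is monotone along the truncation.
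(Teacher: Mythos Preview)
Your proposal is correct and follows essentially the same route the paper intends: the paper gives no detailed proof of Proposition \ref{gsglb=}, simply pointing to Theorem 2.4 in \cite{Bian2020}, and your truncation argument is precisely the adaptation of that proof to the present setting, with the additional group terms handled via the monotonicity of $\theta(\|\cdot\|_p)$ under zeroing small coordinates. The one point worth flagging is that your bound $f(\tilde x)-f(x)\le L_f\sum_{j\in T(x)}|x_j|$ is justified by taking $\xi\in\partial f(\tilde x)$ (not $\partial f(x)$) and using convexity together with $\tilde x\in\Omega$; you have this right, but the phrasing ``subgradient components bounded \ldots\ on $\Omega$'' could be made more explicit about which point the subgradient is taken at.
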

\begin{proof}
For a given global minimizer $\bar{x}$ of (\ref{g0p}), if $\bar{x}$ is not a global minimizer of (\ref{g0ps}), then there exists a global minimizer $\hat{x}$  of (\ref{g0ps}) such that 
\begin{equation}\label{gsglobal}
F(\hat{x})< F(\bar{x})\leq F_0(\bar{x})\leq F_0(\hat{x}),
\end{equation}
where the second inequality follows from the fact that $\theta(t) \leq \mathcal{I}(t),\forall t\geq 0$. Since $\hat{x}$ is also an sw-d-stationary point of \eqref{g0ps}, by Proposition \ref{gslbdp}, we have that $F_0(\hat{x})=F(\hat{x})$, which leads to a contradiction to \eqref{gsglobal}.
	
Suppose $\bar{x}$ is a given global minimizer of (\ref{g0ps}). Then $\bar{x}$ is certainly an sw-d-stationary point of (\ref{g0ps}). By Proposition \ref{gslbdp}, one has that $F_0(\bar{x})=F(\bar{x})\leq F({x})\leq F_0({x}),\forall x\in\Omega$, where the last inequality follows again from the fact that $\theta(t) \leq\mathcal{I}(t),\forall t\geq 0$. Thus we have shown that $\bar{x}$ must be a global minimizer of (\ref{g0p}).
	
By the definition of $\theta$ and Proposition \ref{gslbdp}, one has that optimal values of (\ref{g0p}) and (\ref{g0ps}) are same.
\end{proof}

Since any global minimizer of (\ref{g0ps}) is an sw-d-stationary point of (\ref{g0ps}), one has that any global minimizer of (\ref{g0ps}) owns $\nu$-lower bound property by Proposition \ref{gslbdp}. Further, it follows from Proposition \ref{gsglb=} that any global minimizer of (\ref{g0p}) has $\nu$-lower bound property. Then, we have the following necessary condition for the global minimizers of problem (\ref{g0p}).
\begin{corollary}\label{gl-p}
Any global minimizer of (\ref{g0p}) has $\nu$-lower bound property.
\end{corollary}

Next, we give an example to show that the $\nu$-lower bound property is not a necessary condition for the local minimizers of (\ref{g0p}).
\begin{example}\label{g0p-exp1}
Consider the specific problem of (\ref{g0p}) as follows,
\begin{equation}\label{g0p-exp}
\min_{x:=(x_1,x_2)^\top\in[0,1]^2}~\big|x_1-x_2-\frac{1}{2}\big|+\|x\|_0+\mathcal{I}(\|x\|_p),~p=1,2.
\end{equation}
For (\ref{g0p-exp}), $\nu$ in Assumption \ref{gsassum} can be chosen in $(0,1)$. Denote the set of global minimizers, local minimizers and sw-d-stationary points by $\mathcal{GM},\mathcal{LM}$ and $\mathcal{SS}$, respectively. It follows that $\mathcal{GM}=\{(0,0)^\top\}$, $\mathcal{LM}=\{x\in[0,1]^2:x_1-x_2=\frac{1}{2}\}\cup\{(0,0)^\top\}$ and $\mathcal{SS}=\{x\in[\nu,1]^2:x_1-x_2=\frac{1}{2}\}\cup\{(x_1,0)^\top:x_1=0~\mbox{or}~\frac{1}{2}~\mbox{if}~\nu\leq\frac{1}{2}~\mbox{and}~x_1=0~\mbox{if}~\nu>\frac{1}{2}\}$. It can be seen that some local minimizers of (\ref{g0p-exp}) do not have $\nu$-lower bound property, and $\mathcal{GM}\subseteq\mathcal{SS}\subsetneq\mathcal{LM}$. In particular, if $\nu\in(\frac{1}{2},1)$, we have $\mathcal{GM}=\mathcal{SS}$ for (\ref{g0p-exp}).
\end{example}

Since the $\nu$-lower bound property is a necessary condition for global minimizers of (\ref{g0p}), but not for its local minimizers, we define the following $\nu$-strong local minimizers of (\ref{g0p}).
\begin{definition}\label{nustronglocal}
We call $x$ a $\nu$-strong local minimizer of problem (\ref{g0p}), if $x$ is a local minimizer of problem (\ref{g0p}) and owns $\nu$-lower bound property.
\end{definition}

Based on the $\nu$-lower bound property in Definition \ref{nustronglocal}, the set of $\nu$-strong local minimizers of (\ref{g0p}) gets smaller as the parameter $\nu$ gets bigger. Considering (\ref{g0p}) with $\lambda_2=0$, the weak restriction on $\nu$ in Assumption \ref{gsassum} than \cite[Assumption 2]{Bian2020}, may cause that the defined $\nu$-strong local minimizer is sufficient but not necessary to be a strong local minimizer in \cite{Bian2020}.

Next, we derive a necessary and sufficient condition for the local minimizers of (\ref{g0p}).
\begin{proposition}\label{g0plocalcondition}
$\bar{x}\in\Omega$ is a local minimizer of problem (\ref{g0p}) if and only if $\bar{x}$ is a global minimizer of $f$ on $\bar\Omega:=\{x\in\Omega:x_j=0,\forall j\in\mathcal{A}(\bar{x})\}$, which is equivalent to
\begin{equation}\label{2exp}
\bm{0}\in [\partial f(\bar{x})+N_{\Omega}(\bar{x})]_{\mathcal{A}^c(\bar{x})}.
\end{equation}
\end{proposition}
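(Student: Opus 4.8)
The plan is to prove the two equivalences separately: first that $\bar x$ is a local minimizer of \eqref{g0p} if and only if $\bar x$ minimizes $f$ over $\bar\Omega$, and then that this in turn is characterized by the first-order condition \eqref{2exp}. The key observation driving the first equivalence is that on a sufficiently small ball $B_\delta(\bar x)$, the discrete terms $\lambda_1\|x\|_0+\lambda_2\sum_l w_l\mathcal{I}(\|x_{(l)}\|_p)$ behave nicely: for $j\in\mathcal{A}^c(\bar x)$ we have $x_j\neq 0$ for all $x\in B_\delta(\bar x)$ once $\delta$ is small, so $\mathcal{I}(x_j)=1$ is locally constant, and similarly $\mathcal{I}(\|x_{(l)}\|_p)=1$ is locally constant on any group $G_l$ meeting $\mathcal{A}^c(\bar x)$. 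Meanwhile for $j\in\mathcal{A}(\bar x)$, perturbing $x_j$ away from $0$ can only \emph{increase} the $\ell_0$ and group terms (since $\mathcal{I}\ge 0$ and $\mathcal{I}(0)=0$). Hence $F_0(x)\ge f(x)+\text{const}$ on $B_\delta(\bar x)$, with equality when $x\in\bar\Omega$, where the constant equals $F_0(\bar x)-f(\bar x)$.

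First I would fix this $\delta>0$ small enough that (a) $x_j$ keeps its sign on $B_\delta(\bar x)$ for every $j\in\mathcal{A}^c(\bar x)$, and (b) $x_{(l)}\neq\bm 0$ on $B_\delta(\bar x)$ for every $l$ with $G_l\cap\mathcal{A}^c(\bar x)\neq\emptyset$. For the ``if'' direction: assume $\bar x$ minimizes $f$ over $\bar\Omega$. For $x\in B_\delta(\bar x)\cap\Omega$, write $F_0(x)=f(x)+\lambda_1\|x\|_0+\lambda_2\sum_l w_l\mathcal{I}(\|x_{(l)}\|_p)$; the discrete part is $\ge \lambda_1|\mathcal{A}^c(\bar x)|+\lambda_2\sum_{l:G_l\cap\mathcal{A}^c(\bar x)\neq\emptyset}w_l$, which is exactly its value at $\bar x$. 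Let $\tilde x$ be the projection of $x$ onto $\bar\Omega$ obtained by zeroing out the coordinates in $\mathcal{A}(\bar x)$; then $\tilde x\in B_\delta(\bar x)\cap\bar\Omega$ and, since $f$ is convex hence continuous, by shrinking $\delta$ we also get $f(\tilde x)\le f(x)+\text{(small)}$ — actually the cleaner route is to use the minimality of $\bar x$ over $\bar\Omega$ directly together with convexity: $f(x)\ge f(\bar x)+\langle\xi,x-\bar x\rangle$ for $\xi\in\partial f(\bar x)$, and then use \eqref{2exp} to kill the inner product. This suggests proving the two equivalences in a different order.

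So my preferred ordering is: (Step 1) show $\bar x$ minimizes $f$ over $\bar\Omega$ $\iff$ \eqref{2exp}; (Step 2) show $\bar x$ is a local minimizer of \eqref{g0p} $\iff$ $\bar x$ minimizes $f$ over $\bar\Omega$. For Step 1, note $\bar\Omega=\{x\in\Omega:x_{\mathcal{A}(\bar x)}=0\}$ is a convex set containing $\bar x$ in the relative interior with respect to the free coordinates $\mathcal{A}^c(\bar x)$; the standard convex optimality condition gives $\bm 0\in[\partial f(\bar x)]_{\mathcal{A}^c(\bar x)}+[N_\Omega(\bar x)]_{\mathcal{A}^c(\bar x)}$, using that $N_{\bar\Omega}(\bar x)$ restricted to the free block equals $[N_\Omega(\bar x)]_{\mathcal{A}^c(\bar x)}$ and the $\mathcal{A}(\bar x)$-block of the normal cone is all of $\mathbb{R}^{|\mathcal{A}(\bar x)|}$. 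For Step 2 ``$\Leftarrow$'': for $x\in B_\delta(\bar x)\cap\Omega$ with projection $\tilde x$ onto $\bar\Omega$, the discrete part of $F_0(x)$ is $\ge$ that of $F_0(\tilde x)=$ that of $F_0(\bar x)$, and $f(x)\ge f(\tilde x)$? — no, that's false in general, so instead bound $f(x)\ge f(\bar x)+\langle\xi,x-\bar x\rangle=f(\bar x)+\langle\xi_{\mathcal{A}^c(\bar x)},x_{\mathcal{A}^c(\bar x)}-\bar x_{\mathcal{A}^c(\bar x)}\rangle+\langle\xi_{\mathcal{A}(\bar x)},x_{\mathcal{A}(\bar x)}\rangle$; the first inner product is handled by \eqref{2exp} and convexity of $\Omega$, but the term $\langle\xi_{\mathcal{A}(\bar x)},x_{\mathcal{A}(\bar x)}\rangle$ can be negative. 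This is the main obstacle: a negative linear term in the zeroed coordinates could in principle overcome the $\lambda_1$ jump. The resolution is that on those coordinates the jump is a fixed positive amount $\lambda_1$ per newly-activated coordinate, while $|\langle\xi_{\mathcal{A}(\bar x)},x_{\mathcal{A}(\bar x)}\rangle|\le L_f\|x_{\mathcal{A}(\bar x)}\|_1\to 0$ as $\delta\to 0$ (using Assumption \ref{gsassum}(i) and the bound $L_f$), so for $\delta$ small enough any $x$ with $x_{\mathcal{A}(\bar x)}\neq 0$ still has $F_0(x)>F_0(\bar x)$, while for $x\in\bar\Omega$ we already have $F_0(x)\ge F_0(\bar x)$ from Step 1. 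For Step 2 ``$\Rightarrow$'': if $\bar x$ is a local minimizer of \eqref{g0p} then in particular $F_0(x)\ge F_0(\bar x)$ for $x\in B_\delta(\bar x)\cap\bar\Omega$, and on $\bar\Omega$ the discrete part is constant, so $f(x)\ge f(\bar x)$ there, i.e.\ $\bar x$ is a local — hence, by convexity of $f$ and $\bar\Omega$, global — minimizer of $f$ on $\bar\Omega$. I expect the whole argument to be short once the ordering is chosen correctly; the only delicate point is the quantitative comparison of the $O(\delta)$ linear error against the $\Theta(1)$ combinatorial jump on $\mathcal{A}(\bar x)$.
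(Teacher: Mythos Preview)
Your proposal is correct and follows essentially the same three-part structure as the paper's proof: local minimality of $F_0$ restricted to $\bar\Omega\cap B_\delta(\bar x)$ reduces to local (hence global) minimality of $f$ on $\bar\Omega$; off $\bar\Omega$ the discrete jump of at least $\lambda_1$ beats the small change in $f$; and the convex first-order condition on $\bar\Omega$ gives \eqref{2exp} via the box structure of $\Omega$. The only minor difference is that for the ``$\Leftarrow$'' direction you invoke the subgradient inequality together with \eqref{2exp} and the $L_f$ bound to control $f(x)-f(\bar x)$ coordinate-wise, whereas the paper simply uses continuity of $f$ to get $f(\bar x)\le f(x)+\lambda_1$ on a small ball---both routes yield the same $O(\delta)$-versus-$\lambda_1$ comparison you identified.
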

\begin{proof}
For a given vector $\bar{x}\in\mathbb{R}^n$, there exists a $\delta>0$ such that \begin{equation}\label{AC-rela}
\mathcal{A}^c(\bar{x})\subseteq\mathcal{A}^c(x),\forall x\in B_{\delta}(\bar{x}),
\end{equation}
which implies
\begin{equation}\label{im==00}
\|\bar{x}\|_0=\|x\|_0~{\rm{and}}~\sum_{l=1}^{L}w_l\mathcal{I}(\|\bar{x}_{(l)}\|_p)=\sum_{l=1}^{L}w_l\mathcal{I}(\|{x}_{(l)}\|_p),\forall x\in\bar\Omega\cap B_{\delta}(\bar{x}).
\end{equation}
(i) Let $\bar{x}$ be a local minimizer of (\ref{g0p}). Then there exists a $\delta'\in(0,\delta]$ such that $F_0(\bar{x})\leq F_0(x),\forall x\in B_{\delta'}(\bar{x})$. By (\ref{im==00}), we have $f(\bar{x})\leq f({x}),\forall x\in \bar\Omega\cap B_{\delta'}(\bar{x}).$ Then $\bar{x}$ is a local minimizer of $f$ on $\bar{\Omega}$. Since $f$ is convex, we have that $\bar{x}$ is a global minimizer of $f$ on $\bar{\Omega}$.\\
(ii) If $\bar{x}$ is a global minimizer of $f$ on $\bar{\Omega}$, then $f(\bar{x})\leq f({x}),\forall x\in \bar\Omega$. In this case, we also have (\ref{im==00}). Then, we deduce that \begin{equation}\label{mequ1}
F_0(\bar{x})\leq F_0(x),\forall x\in \bar\Omega\cap B_{\delta}(\bar{x}).
\end{equation}On the other hand, by the continuity of $f$, there exists a $\delta''\in(0,\delta]$ such that $f(\bar{x})\leq f({x})+\lambda_1,\forall x\in \Omega\cap B_{\delta''}(\bar{x})$. For any $x\in(\Omega\setminus \bar{\Omega})\cap B_{\delta''}(\bar{x})$, we have $\|\bar{x}\|_0+1\leq\|x\|_0$ by (\ref{AC-rela}) and hence $\lambda_1\|\bar{x}\|_0+\lambda_2\sum_{l=1}^{L}w_l\mathcal{I}(\|\bar{x}_{(l)}\|_p)+\lambda_1\leq\lambda_1\|x\|_0+\lambda_2\sum_{l=1}^{L}w_l\mathcal{I}(\|{x}_{(l)}\|_p)$. Then, we have
\begin{equation}\label{mequ2}
F_0(\bar{x})\leq f(x)+\lambda_1+\lambda_1\|\bar{x}\|_0+\lambda_2\sum_{l=1}^{L}w_l\mathcal{I}(\|\bar{x}_{(l)}\|_p)\leq F_0(x),\forall x\in(\Omega\setminus \bar{\Omega})\cap B_{\delta''}(\bar{x}).
\end{equation}
Therefore, combining (\ref{mequ1}) and (\ref{mequ2}), $\bar{x}$ is a global minimizer of (\ref{g0p}) on $\Omega\cap B_{\delta''}(\bar{x})$, which means that $\bar{x}$ is a local minimizer of (\ref{g0p}).\\
(iii) By the convexity of $f$, $\bar{x}\in\Omega$ is a global minimizer of $f$ on $\bar{\Omega}$ is equivalent to that $\bm{0}\in\partial f(\bar{x})+N_{\bar{\Omega}}(\bar{x})$. Based on $[N_{\bar{\Omega}}(\bar{x})]_{\mathcal{A}(\bar{x})}=\mathbb{R}^{|\mathcal{A}(\bar{x})|}$ and $[N_{\bar{\Omega}}(\bar{x})]_{\mathcal{A}^c(\bar{x})}=[N_{\Omega}(\bar{x})]_{\mathcal{A}^c(\bar{x})}$, we obtain that $\bm{0}\in\partial f(\bar{x})+N_{\bar{\Omega}}(\bar{x})$ is equivalent to (\ref{2exp}).
\end{proof}

Further, we analyze the relationship between $\nu$-strong local minimizers of problem (\ref{g0p}) and sw-d-stationary points of relaxation problem (\ref{g0ps}).
\begin{proposition}\label{local-stat}
$\bar{x}$ is an sw-d-stationary point of (\ref{g0ps}) if and only if it is a $\nu$-strong local minimizer of (\ref{g0p}). Moreover, the corresponding objective function values are same.
\end{proposition}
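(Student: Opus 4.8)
The plan is to establish the two implications separately, using Proposition~\ref{g0plocalcondition} as the common bridge: both an sw-d-stationary point of (\ref{g0ps}) and a $\nu$-strong local minimizer of (\ref{g0p}) should turn out to be precisely a point that has the $\nu$-lower bound property \emph{and} satisfies $\bm{0}\in[\partial f(\bar x)]_{\mathcal{A}^c(\bar x)}+[N_{\Omega}(\bar x)]_{\mathcal{A}^c(\bar x)}$. Throughout I would first record the elementary identities for the special index vectors at a point $\bar x$ with the $\nu$-lower bound property: for $j\in\mathcal{A}(\bar x)$ one has $I^{\bar x}_j=1$, so $\nabla\theta_{I^{\bar x}_j}(\bar x_j)$ is the zero vector; for $j\in\mathcal{A}^c(\bar x)$ the property forces $|\bar x_j|\geq\nu$, so $I^{\bar x}_j\in\{2,3\}$ and $\nabla\theta_{I^{\bar x}_j}(\bar x_j)$ is the vector whose $j$-th entry is $\frac1\nu\sgn(\bar x_j)$ and whose other entries vanish; similarly $\partial\theta_{J^{\bar x}_l}(\|\bar x_{(l)}\|_p)=\{\bm 0\}$ when $\bar x_{(l)}=\bm 0$ and equals $\frac1\nu\partial\|\bar x_{(l)}\|_p$ (the subdifferential in $x$ of $x\mapsto\|x_{(l)}\|_p$, supported on $G_l$) when $\bar x_{(l)}\neq\bm 0$, since then $J^{\bar x}_l=2$. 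I would also invoke the Moreau--Rockafellar sum rule to write $\partial\bar f(\bar x)=\partial f(\bar x)+\frac{\lambda_1}{\nu}\partial\|\bar x\|_1+\frac{\lambda_2}{\nu}\sum_{l}w_l\,\partial\|\bar x_{(l)}\|_p$.

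For the implication ``sw-d-stationary $\Rightarrow$ $\nu$-strong local minimizer'': Proposition~\ref{gslbdp} already yields the $\nu$-lower bound property, so it remains to check the optimality condition of Proposition~\ref{g0plocalcondition}. In (\ref{swdstat}) I would select the particular element $v$ of the left-hand set obtained by choosing, for each $l$ with $\bar x_{(l)}\neq\bm 0$, the subgradient of $\|\cdot_{(l)}\|_p$ whose entries vanish on $\mathcal{A}(\bar x)$ (for $p=2$ this subdifferential is already a singleton, so there is nothing to choose). A direct computation then shows that, coordinatewise on $\mathcal{A}^c(\bar x)$, this $v$ coincides exactly with the $\ell_1$- and group-$\ell_p$-subgradient contributions that appear in $\partial\bar f(\bar x)$; hence in any decomposition witnessing $v\in\partial\bar f(\bar x)+N_{\Omega}(\bar x)$ those terms cancel on $\mathcal{A}^c(\bar x)$, leaving $\bm 0\in[\partial f(\bar x)]_{\mathcal{A}^c(\bar x)}+[N_{\Omega}(\bar x)]_{\mathcal{A}^c(\bar x)}$. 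By Proposition~\ref{g0plocalcondition} and Definition~\ref{nustronglocal}, $\bar x$ is a $\nu$-strong local minimizer of (\ref{g0p}).

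For the converse: by Definition~\ref{nustronglocal} and Proposition~\ref{g0plocalcondition} there exist $\xi\in\partial f(\bar x)$ and $n'\in N_{\Omega}(\bar x)$ with $\xi_j+n'_j=0$ for all $j\in\mathcal{A}^c(\bar x)$, and $\bar x$ has the $\nu$-lower bound property. Given an arbitrary $v$ in the left-hand set of (\ref{swdstat}), written as $\lambda_1\sum_j\nabla\theta_{I^{\bar x}_j}(\bar x_j)+\lambda_2\sum_l w_l\zeta^l$ with $\zeta^l\in\partial\theta_{J^{\bar x}_l}(\|\bar x_{(l)}\|_p)$, I would exhibit a decomposition $v=\xi+\eta+\frac{\lambda_2}{\nu}\sum_l w_l(\nu\zeta^l)+n$ of the required form: the group terms cancel because $\nu\zeta^l\in\partial\|\bar x_{(l)}\|_p$ in every case (the zero vector always lies in the subdifferential of a norm); on $\mathcal{A}^c(\bar x)$ one takes $\eta_j=\frac{\lambda_1}{\nu}\sgn(\bar x_j)$ and $n_j=n'_j$; and on $\mathcal{A}(\bar x)$ one takes $n_j=0$ and $\eta_j=-\xi_j$, which is admissible because Assumption~\ref{gsassum}(ii) gives $|\xi_j|\leq L_f<\lambda_1/\nu$, so $-\xi_j\in[-\lambda_1/\nu,\lambda_1/\nu]=\frac{\lambda_1}{\nu}\partial|\bar x_j|$. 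This shows $v\in\partial\bar f(\bar x)+N_{\Omega}(\bar x)$, i.e., (\ref{swdstat}) holds. Finally, for the equality of objective values, in either case $\bar x$ has the $\nu$-lower bound property, so every $\bar x_j$ and every $\|\bar x_{(l)}\|_p$ is either $0$ or $\geq\nu$; since $\theta(t)=|t|/\nu-\max_i\theta_i(t)=\min\{|t|/\nu,1\}$, this gives $\theta(\bar x_j)=\mathcal{I}(\bar x_j)$ and $\theta(\|\bar x_{(l)}\|_p)=\mathcal{I}(\|\bar x_{(l)}\|_p)$, whence $F(\bar x)=f(\bar x)+\lambda_1\sum_j\theta(\bar x_j)+\lambda_2\sum_l w_l\theta(\|\bar x_{(l)}\|_p)=F_0(\bar x)$.

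The step I expect to be the main obstacle is the coordinate-level bookkeeping in the converse direction, in particular verifying that on the zero coordinates the $f$-subgradient entry $\xi_j$ can be absorbed entirely into the subgradient of $\frac{\lambda_1}{\nu}\|\cdot\|_1$ (this is exactly where the requirement $\nu<\lambda_1/L_f$ of Assumption~\ref{gsassum}(ii) is used) while simultaneously keeping the assembled $n$ in $N_{\Omega}(\bar x)$, and handling the non-smoothness of $\|\cdot_{(l)}\|_p$ for $p=1$ at points where $\bar x_{(l)}\neq\bm 0$ yet some components of $\bar x_{(l)}$ vanish. It is also important to fix, once and for all, the reading of $\nabla\theta_{I^{\bar x}_j}(\cdot)$ and $\partial\theta_{J^{\bar x}_l}(\|\cdot_{(l)}\|_p)$ as the (sub)gradients in $x$ of the corresponding composite maps.
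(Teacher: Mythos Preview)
Your proposal is correct and follows essentially the same route as the paper's proof: both directions are reduced, via Proposition~\ref{g0plocalcondition}, to the characterization ``$\nu$-lower bound property together with $\bm 0\in[\partial f(\bar x)]_{\mathcal{A}^c(\bar x)}+[N_\Omega(\bar x)]_{\mathcal{A}^c(\bar x)}$'', and the key use of Assumption~\ref{gsassum}(ii) on the zero coordinates is exactly the same.

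One small methodological difference is worth noting. In the converse direction the paper verifies the inclusion (\ref{swdstat}) separately on the coordinate blocks $\mathcal{A}^c(\bar x)$ and $\mathcal{A}(\bar x)$, whereas you take an arbitrary $v$ in the left-hand set and exhibit an explicit single decomposition $v=\xi+\eta+\frac{\lambda_2}{\nu}\sum_l w_l(\nu\zeta^l)+n$ with one fixed $\xi\in\partial f(\bar x)$. Your version is slightly more careful: since $\partial f(\bar x)$ need not split as a product over $\mathcal{A}^c(\bar x)$ and $\mathcal{A}(\bar x)$, the paper's two blockwise inclusions do not by themselves yield (\ref{swdstat}) without the extra observation that the same $\xi$ works on both blocks---which is precisely what your construction makes explicit. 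Your handling of the group terms (matching $\nu\zeta^l\in\partial\|\bar x_{(l)}\|_p$ in all cases) and of the $p=1$ nondifferentiability is also correct. In the forward direction, your selection of a specific $v$ with group subgradients vanishing on $\mathcal{A}(\bar x)$ is unnecessary (any $v$ would do, since only the $\mathcal{A}^c$ coordinates matter and the norm subdifferentials there are singletons), but it does no harm.
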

\begin{proof}
First, let $\bar{x}$ be an sw-d-stationary point of (\ref{g0ps}). By Proposition \ref{gslbdp}, $\bar{x}$ has the $\nu$-lower bound property. By Definition \ref{gssdef}, the definitions of $I^x$ and $J^x$, and the $\nu$-lower bound property of $\bar{x}$, one has (\ref{2exp}), which implies $\bar{x}$ is a local minimizer of (\ref{g0p}) by Proposition \ref{g0plocalcondition}. Therefore, $\bar{x}$ is a $\nu$-strong local minimizer of (\ref{g0p}).
	
Second, let $\bar{x}$ be a $\nu$-strong local minimizer of (\ref{g0p}). By the $\nu$-lower bound property of $\bar{x}$, we have $\big\{\big[\lambda_1\sum_{j=1}^{n}\nabla\theta_{I^{\bar{x}}_j}(\bar{x}_j)\big]_{\mathcal{A}^c(\bar{x})}\big\}=\big[\frac{\lambda_1}{\nu}\partial\|\bar{x}\|_1\big]_{\mathcal{A}^c(\bar{x})}$. Adding them to both sides of (\ref{2exp}), respectively, we obtain that there exists a $\bar\zeta\in\partial{f}(\bar{x})$ such that
\begin{equation}\label{inclcase1}
\Big[\lambda_1\sum_{j=1}^{n}\nabla\theta_{I^{\bar{x}}_j}(\bar{x}_j)\Big]_{\mathcal{A}^c(\bar{x})}\in\Big[\bar\zeta+\frac{\lambda_1}{\nu}\partial\|\bar{x}\|_1+N_{\Omega}(\bar{x})\Big]_{\mathcal{A}^c(\bar{x})}.
\end{equation}
Based on Assumption \ref{gsassum} and $\bm{0}\in N_{\Omega}(\bar{x})$, we deduce that $\bm{0}\in[\bar\zeta+\frac{\lambda_1}{\nu}\partial\|\bar{x}\|_1+N_{\Omega}(\bar{x})]_{\mathcal{A}(\bar{x})}$.
Further, since $\big[\lambda_1\sum_{j=1}^{n}\nabla\theta_{I^{\bar{x}}_j}(\bar{x}_j)\big]_{\mathcal{A}(\bar{x})}=\bm{0}$, we have $\Big[\lambda_1\sum_{j=1}^{n}\nabla\theta_{I^{\bar{x}}_j}(\bar{x}_j)\Big]_{\mathcal{A}(\bar{x})}\in\Big[\bar\zeta+\frac{\lambda_1}{\nu}\partial\|\bar{x}\|_1+N_{\Omega}(\bar{x})\Big]_{\mathcal{A}(\bar{x})}$.
Then, by (\ref{inclcase1}), $\partial\|\bar{x}\|_1=[\partial\|\bar{x}\|_1]_{\mathcal{A}(\bar{x})}\times[\partial\|\bar{x}\|_1]_{\mathcal{A}^c(\bar{x})}$ and $N_{\Omega}(\bar{x})=[N_{\Omega}(\bar{x})]_{\mathcal{A}(\bar{x})}\times[N_{\Omega}(\bar{x})]_{\mathcal{A}^c(\bar{x})}$, we deduce $\lambda_1\sum_{j=1}^{n}\nabla\theta_{I^{\bar{x}}_j}(\bar{x}_j)\in\bar\zeta+\frac{\lambda_1}{\nu}\partial\|\bar{x}\|_1+N_{\Omega}(\bar{x})$. Since $\bar\zeta\in\partial{f}(\bar{x})$, we obtain
\begin{equation}\label{lllzz}
\lambda_1\sum_{j=1}^{n}\nabla\theta_{I^{\bar{x}}_j}(\bar{x}_j)\in\partial {f}(\bar{x})+\frac{\lambda_1}{\nu}\partial\|\bar{x}\|_1+N_{\Omega}(\bar{x}).
\end{equation}
Moreover, by the $\nu$-lower bound property of $\bar{x}$, we have $\lambda_2\sum_{l=1}^{L}w_l\partial\theta_{J^x_l}(\|\bar{x}_{(l)}\|_p)\subseteq\frac{\lambda_2}{\nu}\sum_{l=1}^{L}w_l\partial\|\bar{x}_{(l)}\|_p$. Adding them to both sides of (\ref{lllzz}), respectively, we obtain (\ref{swdstat}) with $x$ replaced by $\bar{x}$. Therefore, $\bar{x}$ is an sw-d-stationary point of (\ref{g0ps}).
	
Based on Proposition \ref{gslbdp}, one has that $F_0(\bar{x})=F(\bar{x})$, when $\bar{x}$ is an sw-d-stationary point of (\ref{g0ps}) or a $\nu$-strong local minimizer of (\ref{g0p}).
\end{proof}

\begin{remark}
In this section, the $\nu$-lower bound property of sw-d-stationary point of (\ref{g0ps}) and the equivalence theories are based on the following two necessary properties of the relaxation function $\theta$ for $\mathcal{I}$:
	
(i) $\theta(0)=0$ and $\theta(t)=1$ when $|t|\geq\nu$;
	
(ii) $|\theta'(t)|$ has a uniform positive lower bound on $(-\nu,0)\cup(0,\nu)$.
	
Capped-$\ell_1$ function is a simple relaxation function of $\mathcal{I}$ satisfying both properties (i) and (ii). But none of the relaxation functions $\ell_p(0<p\leq 1)$, SCAD and MCP satisfy both properties (i) and (ii) at the same time. Some equivalences in the sense of global minimizers between (\ref{g0p}) with $\lambda_2=0$ and its relaxation problems can be found in \cite{Bian2020,LeThi2015,Soubies2017} and the references therein.
\end{remark}

In conclusion, the relationships between sparse group $\ell_0$ regularized problem (\ref{g0p}) and its DC relaxation problem (\ref{g0ps}) under Assumption \ref{gsassum} are shown in Fig.\ref{figure0}. The ``property'' in Fig.\ref{figure0} means that point $x$ satisfies $\|x_{(l)}\|_p\neq\nu,\forall l\in[L]$ if $\lambda_2>0$, and $|x_j|\neq\nu,\forall j\in[n]$.
\begin{figure}
\centering
\includegraphics[width=6in]{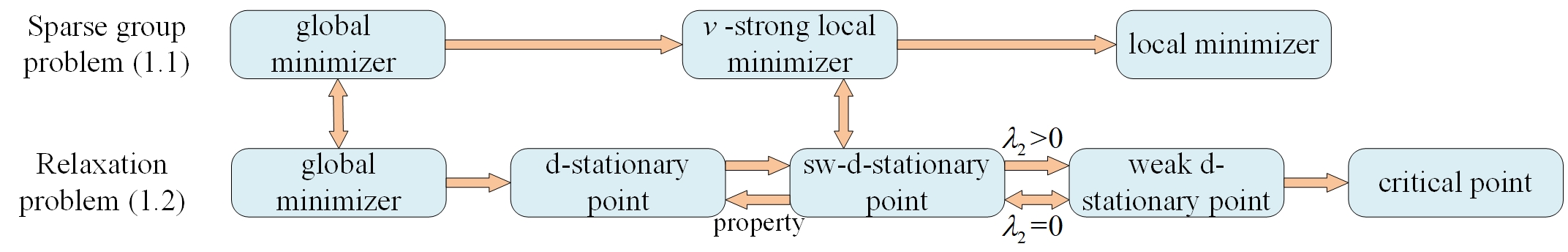}
\caption{Relationships between sparse group problem (\ref{g0p}) and relaxation problem (\ref{g0ps})}\label{figure0}
\end{figure}
\section{DC Algorithms}\label{section3}
In this section, according to the formulation (\ref{swdstat}) in Definition \ref{gssdef}, we construct two kinds of DC algorithms to solve the DC relaxation problem (\ref{g0ps}). Different from the existing DC algorithms for finding critical points, we prove that any accumulation point of the proposed DC algorithms is an sw-d-stationary point of problem (\ref{g0ps}), which is a $\nu$-strong local minimizer of problem (\ref{g0p}). In particular, for the proposed algorithms, we prove that all accumulation points of the generated iterates have the same location of zero entries and the corresponding entries of the generated iterates converge to zero within finite iterations.

First, we make the following assumption throughout the subsequent content.
\begin{assumption}\label{as-h}
(i) $f$ is level-bounded on $\Omega$.\\
(ii) $f:=f_n+f_s$, where $f_n$ is convex but not necessarily differentiable, $f_s$ is differentiable convex and its gradient is Lipschitz continuous with Lipschitz modulus $L_s>0$.\\
(iii) $L_f$ in Assumption \ref{gsassum} is not less than $\sup\{|\xi_j|:\xi\in\partial f_n(x)+\partial f_s(y),x,y\in\Omega,j\in[n]\}$.\\
(iv) For any $\mu>0$, the proximal operator of $f_n(x)+\frac{\lambda_1}{\mu}\|x\|_1+\frac{\lambda_2}{\mu}\sum_{l=1}^{L}w_l\|x_{(l)}\|_p+\delta_\Omega(x)$ can be evaluated.
\end{assumption}

There are many examples satisfying Assumption \ref{as-h}, for example, $f_n(x)=\|x\|_1$, $f_s(x)=\|Ax-b\|^2$ with $A\in\mathbb{R}^{m\times n}$ and $b\in\mathbb{R}^m$, $\Omega=[0,1]^n$ and $p=1$. More applicable examples and calculation of proximal operators can be found in Remark \ref{alg1-sp}.

Next, we give the following necessary definitions throughout this paper.

Given $K\in\mathbb{N}$ and sequence $\{\bar{\mu}_k\}$ with $\bar{\mu}_k>0,k=0,1,\ldots,K-1$, define
\begin{equation}\label{mukexp}
\mu_k=\left\{
\begin{aligned}
&\bar\mu_k,&&k=0,1,\ldots,K-1,\\
&\nu,&&k=K,K+1\ldots.
\end{aligned}
\right.
\end{equation}For $\mu>0$ and $t\in\mathbb{R}$, let $\theta_1(t;\mu)=0$, $\theta_2(t;\mu)=t/\mu-1$, $\theta_3(t;\mu)=-t/\mu-1$. Clearly, $\theta_i(t;\nu)=\theta_i(t),i=1,2,3$. For $I\in\{1,2,3\}^n$, $J\in\{1,2\}^L$, $x\in\mathbb{R}^n$ and $\mu>0$, define $\Theta_{I,J}(x;\mu):=\lambda_1\sum_{j=1}^n\theta_{I_j}(x_j;\mu)+\lambda_2\sum_{l=1}^{L}w_l\theta_{J_l}(\|x_{(l)}\|_p;\mu)$. Clearly, $\Theta_{I,J}(x;\nu)=\Theta_{I,J}(x)$. For $I\in\{1,2,3\}^n$, $J\in\{1,2\}^L$, $x,y,z\in\mathbb{R}^n$, $\mu>0$ and $\xi\in\partial_z \Theta_{I,J}(z;\mu)$, define $\bar{f}_n(x;\mu):=f_n(x)+\frac{\lambda_1}{\mu}\|x\|_1+\frac{\lambda_2}{\mu}\sum_{l=1}^{L}w_l\|x_{(l)}\|_p$ and
\begin{equation*}
{w}_{I,J,\xi}(x;\mu,y,z):=\bar{f}_n(x;\mu)+f_s(y)+\langle\nabla f_s(y),x-y\rangle-\Theta_{I,J}(z;\mu)-\langle\xi,x-z\rangle.
\end{equation*}

Before presenting the designed algorithms, we analyze some necessary properties, which will play an important role in the convergence analysis of the proposed algorithms. Based on Proposition 2 in \cite{Lu2019}, combining the optimality condition and the upper semicontinuity for the subdifferential of convex functions, we have the following proposition.

\begin{proposition}\label{coro1}
Let $I\in\{1,2,3\}^n$ and $J\in\{1,2\}^L$, $\{y^k\}$ and $\{z^k\}$ be two sequences of vectors in $\Omega$ converging to some $y^*$ and $z^*$, respectively, $\{\xi^k\}$ be a sequence of vectors in $\partial_x \Theta_{I,J}(z^k)$ converging to some $\xi^*$ and $\{\alpha_k\}$ be a sequence of positive scalars converging to some $\alpha_*>0$. Suppose that the sequence $\{x^k\}$ is given by
\begin{equation*}\label{limit1}
x^{k}=\underset{x\in\Omega}{\rm{argmin}}\{w_{I,J,\xi^k}(x;\nu,y^k,z^k)+\frac{\alpha_k}{2}\|x-y^k\|^2\}.
\end{equation*}
Then $\{x^k\}$ converges to $\underset{x\in\Omega}{\rm{argmin}}\{w_{I,J,\xi^*}(x;\nu,y^*,z^*)+\frac{\alpha_*}{2}\|x-y^*\|^2\}$.
\end{proposition}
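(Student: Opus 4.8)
The plan is to exploit the strong convexity of the objective defining $x^k$, which forces a quantitative stability estimate, and then pass to the limit using the continuity of all the ingredients together with the upper semicontinuity of $\partial\Theta_{I,J}$. Concretely, write $x^* := \operatorname{argmin}_{x\in\Omega}\{w_{I,J,\xi^*}(x;\nu,y^*,z^*)+\frac{\alpha_*}{2}\|x-y^*\|^2\}$; note this minimizer is unique since $\bar f_n(\cdot;\nu)$ is convex, the linear terms are affine, and $\frac{\alpha_*}{2}\|x-y^*\|^2$ is strongly convex, so the whole objective is $\alpha_*$-strongly convex on the convex set $\Omega$. Likewise each $x^k$ is the unique minimizer of an $\alpha_k$-strongly convex problem. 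First I would record the first-order optimality condition for $x^k$: there exists $\eta^k\in\partial\bar f_n(x^k;\nu)$ and $\zeta^k\in N_\Omega(x^k)$ with
\begin{equation*}
\eta^k + \nabla f_s(y^k) - \xi^k + \alpha_k(x^k-y^k) + \zeta^k = \bm{0},
\end{equation*}
and the analogous condition at $x^*$.

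The key step is to show $\{x^k\}$ is bounded and that every accumulation point equals $x^*$. For boundedness, I would use the optimality of $x^k$ against the feasible competitor $x^*$ (or any fixed point of $\Omega$): the $\alpha_k$-strong convexity inequality
\begin{equation*}
w_{I,J,\xi^k}(x^*;\nu,y^k,z^k)+\tfrac{\alpha_k}{2}\|x^*-y^k\|^2 \ge w_{I,J,\xi^k}(x^k;\nu,y^k,z^k)+\tfrac{\alpha_k}{2}\|x^k-y^k\|^2 + \tfrac{\alpha_k}{2}\|x^k-x^*\|^2,
\end{equation*}
combined with the convergence (hence boundedness) of $y^k,z^k,\xi^k,\alpha_k$ and the fact that $\alpha_k$ is bounded below by a positive constant for large $k$, yields a uniform bound on $\|x^k-x^*\|$. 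Then, along any convergent subsequence $x^{k_i}\to\bar x$, I would pass to the limit in the optimality condition displayed above: $\nabla f_s$ is continuous, $\alpha_k\to\alpha_*$, $y^k\to y^*$, $\xi^k\to\xi^*$, and the graphs of $\partial\bar f_n(\cdot;\nu)$ and $N_\Omega(\cdot)$ are closed (outer semicontinuous) since both are subdifferentials of convex functions and $\bar f_n$ is finite-valued continuous. This gives a limiting inclusion showing $\bar x$ satisfies the optimality condition for the $(\xi^*,y^*,z^*,\alpha_*)$-problem; by uniqueness of that minimizer, $\bar x = x^*$. Since $\{x^k\}$ is bounded and all its subsequential limits equal $x^*$, the whole sequence converges to $x^*$.

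A cleaner alternative, which I would actually prefer to write up, avoids subsequence extraction entirely: apply the strong-convexity inequality twice — once using $x^*$ as a competitor in the problem defining $x^k$, and once using $x^k$ as a competitor in the problem defining $x^*$ — then add the two inequalities. After cancellation, the cross terms that survive involve the differences $\nabla f_s(y^k)-\nabla f_s(y^*)$, $\xi^k-\xi^*$, $\alpha_k-\alpha_*$, and $y^k-y^*$, multiplied against the bounded quantity $x^k-x^*$; one obtains an estimate of the shape
\begin{equation*}
\alpha_* \|x^k-x^*\|^2 \le \big(\|\nabla f_s(y^k)-\nabla f_s(y^*)\| + \|\xi^k-\xi^*\| + |\alpha_k-\alpha_*|\cdot C + \alpha_* L_s\|y^k-y^*\|\big)\,\|x^k-x^*\| + o(1),
\end{equation*}
whence $\|x^k-x^*\|\to 0$. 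The main obstacle is handling the $\mu$-dependence carefully and the possible nonsmoothness of $f_n$ and of the $\ell_1$/$\ell_p$ terms inside $\bar f_n$: these are nondifferentiable, so I cannot simply differentiate, and must instead work with the subdifferential inclusion and its outer semicontinuity, or — in the two-competitor approach — rely only on the convexity inequality for $\bar f_n(\cdot;\nu)$, which is clean because the nonsmooth term is identical in both problems (same $\mu=\nu$) and therefore cancels exactly when the two inequalities are added. I would also explicitly invoke, as the excerpt permits, the cited Proposition 2 in \cite{Lu2019} to streamline the outer-semicontinuity bookkeeping for $\partial_x\Theta_{I,J}(z^k)\ni\xi^k$, so that the argument reduces to the strongly convex stability estimate above.
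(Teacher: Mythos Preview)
Your proposal is correct. The paper itself does not spell out a proof; it merely states that the result follows ``based on Proposition~2 in \cite{Lu2019}, combining the optimality condition and the upper semicontinuity for the subdifferential of convex functions.'' Your first approach---boundedness from strong convexity, then passing to the limit in the first-order optimality inclusion using closedness of the graphs of $\partial\bar f_n(\cdot;\nu)$ and $N_\Omega(\cdot)$---is precisely this sketch fleshed out, and you even cite the same auxiliary result from \cite{Lu2019}.

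Your second (``preferred'') route, adding the two strong-convexity inequalities with $x^*$ and $x^k$ as mutual competitors, is a genuinely different and more elementary argument that the paper does not use. Its advantage is that it bypasses subsequence extraction and outer-semicontinuity entirely, and your observation that the nonsmooth part $\bar f_n(\cdot;\nu)$ is common to both problems and cancels upon addition is exactly what makes it clean here. The residual quadratic pieces $\frac{\alpha_k}{2}(\|x^*-y^k\|^2-\|x^k-y^k\|^2)$ and the analogous $\alpha_*$ term do not vanish but factor as $\langle x^*-x^k,\,\cdot\,\rangle$ against bounded vectors, so your displayed estimate goes through after a small amount of bookkeeping. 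Either approach suffices; the first mirrors the paper, the second is self-contained.
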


For the $k$-th iteration, we design the subproblems in the proposed algorithms corresponding to the following model with $\mu=\mu_k$,
\begin{equation}\label{g0psa}
\min_{x\in\Omega}~F(x;\mu):=\bar{f}(x;\mu)-\lambda_1\sum_{j=1}^{n}\underset{i=1,2,3}\max\{\theta_i(x_j;\mu)\}-\lambda_2\sum_{l=1}^{L}w_l~\underset{i=1,2}\max\{\theta_i(\|x_{(l)}\|_p;\mu)\}.
\end{equation}
Since $F(x;\nu)=F(x)$, based on the definition (\ref{mukexp}) of $\mu_k$, $F(x;\mu_k)=F(x)$ as $k\geq K$. Thus, problem (\ref{g0psa}) is just problem (\ref{g0ps}) when $k\geq K$. The good performance of the proposed algorithms with the updated $\mu_k$ instead of the fixed parameter $\nu$ will be further explained in Subsection \ref{munes}. Similar to the idea in \cite{Bian2020}, there is only one subproblem in every iteration of the proposed algorithms, which has fewer calculation than the DC algorithms in \cite{Lu2019,Lu2019MP,Pang2017}. 

Based on the above ideas, we design two kinds of specific DC algorithms to solve problem (\ref{g0ps}) with convergence to an sw-d-stationary point in Definition \ref{gssdef}.

For simplicity, we use $I^k$ and $J^k$ to denote $I^{x^k}$ and $J^{x^k}$ throughout this paper, where $\{x^k\}$ is the iterates generated by the proposed algorithms and $\theta_i(\cdot)$ in their definitions is replaced by $\theta_i(\cdot;\mu_k)$.
\subsection{A DC algorithm with line search}
In this part, we propose a DC algorithm with line search to solve problem (\ref{g0ps}). 
\begin{algorithm}
\caption{DC algorithm with line search}\label{g0palg1}
{\bf Initialization:}
Choose $x^0\in\Omega$, $\rho>1$, $c\in(0,L_s]$, $0<\underline{\alpha}\leq\bar{\alpha}$ and integer $N\geq 0$. Set $\{\mu_k\}$ be defined as in (\ref{mukexp}) and $k=0$.
	
{\bf Step 1:} Choose $\alpha_{k}^B\in[\underline{\alpha},\bar{\alpha}]$.
	
{\bf Step 2:} For $m=1,2,\ldots$:
	
\hspace*{0.5in} (2a) Let $\alpha_k=\alpha_{k}^B\rho^{m-1}$.
	
\hspace*{0.5in} (2b) Take $\xi^k\in\partial_x \Theta_{I^k,J^k}(x^k;\mu_k)$ and compute $x^{k}(\alpha_k)$ by
\begin{equation}\label{alg12}
x^{k}(\alpha_k)=\underset{x\in\Omega}{\rm{argmin}}\Big\{w_{I^k,J^k,\xi^k}(x;\mu_k,x^k,x^k)+\frac{\alpha_k}{2}\|x-x^k\|^2\Big\}.
\end{equation}
	
\hspace*{0.5in} If $x^{k}(\alpha_k)$ satisfies
\begin{equation}\label{alg11}
F(x^{k}(\alpha_k);\mu_k)\leq\max_{\lfloor k-N\rfloor_+\leq j\leq k}\{F(x^j;\mu_k)\}-\frac{c}{2}\|x^{k}(\alpha_k)-x^k\|^2,
\end{equation}
	
\hspace*{0.5in} set $x^{k+1}=x^{k}(\alpha_k)$ and $\bar{\alpha}_k=\alpha_k$. Then go to Step 3.
	
{\bf Step 3:} Update $k\leftarrow k+1$ and return to Step 1.
\end{algorithm}

For Algorithm \ref{g0palg1}, $\{F(x^k;\mu_k)\}$ is monotone when $N=0$ and is likely to be nonmonotone when $N>0$.

\begin{remark}\label{alg1-sp}
The subproblem (\ref{alg12}) in Algorithm \ref{g0palg1} is equivalent to the following form with $\zeta^k=x^k-\frac{1}{\alpha_k}\big(\nabla f_s(x^k)-\xi^k\big)$,
\begin{equation}\label{alg1-sb}
x^{k}(\alpha_k)={\rm{prox}}_{\frac{1}{\alpha_k}\big(\bar{f}_n(\cdotp;\mu_k)+\delta_\Omega(\cdot)\big)}(\zeta^k).
\end{equation}Next, we consider the calculation of (\ref{alg1-sb}) in the case of $f=f_s+\lambda\|\cdot\|_1$ with $\lambda\geq0$.
	
(i) Suppose $p=1$ and $\Omega=[l,u]$ with $-l,u\in\overline{\mathbb{R}^n_+}$. Let $\mathcal{L}_{j}:=\{l\in[L]:j\in G_{l}\}$. Then, $\bar{f}_n(x;\mu_k)=\sum_{j=1}^n\tilde\lambda_j|x_j|$ with $\tilde\lambda_j=(\lambda\mu_k+\lambda_1+\lambda_2\sum_{l\in\mathcal{L}_{j}}w_l)/\mu_k$. Based on the variable separation of $\bar{f}_n(\cdotp;\mu_k)$, ${\rm{prox}}_{\frac{1}{\alpha_k}\bar{f}_n(\cdotp;\mu_k)}(\zeta^k)$ is a variant of proximal operator of $\ell_1$ norm and $x^{k}(\alpha_k)$ in (\ref{alg1-sb}) is the projection of ${\rm{prox}}_{\frac{1}{\alpha_k}\bar{f}_n(\cdotp;\mu_k)}(\zeta^k)$ on $\Omega$.
	
(ii) If $p=2$, $\Omega=\mathbb{R}^n$ and $G_{i}\cap G_{j}=\emptyset,1\leq i<j\leq L$, we have $\bar{f}_n(x;\mu_k)=(\lambda+\frac{\lambda_1}{\mu_k})\|x\|_1+\frac{\lambda_2}{\mu_k}\sum_{l=1}^{L}w_l\|x_{(l)}\|$ and $x^{k}(\alpha_k)$ in (\ref{alg1-sb}) is ${\rm{prox}}_{\frac{1}{\alpha_k}\bar{f}_n(\cdotp;\mu_k)}(\zeta^k)$.
	
The corresponding closed-form solutions of ${\rm{prox}}_{\frac{1}{\alpha_k}\bar{f}_n(\cdotp;\mu_k)}(\zeta^k)$ can be found in \cite{Zhang2020MP}.
\end{remark}

Firstly, we show that for each outer loop, its associated inner loops must terminate in finite number of iterations.
\begin{proposition}\label{innerloops}
There exists a positive integer $M$ such that Step 2 of Algorithm \ref{g0palg1} terminates at some $\alpha_k\leq\max\{\bar{\alpha},\rho L_s\}$ in $M$ iterations for any $k\geq0$.
\end{proposition}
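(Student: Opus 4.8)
The plan is to show that the line search in Step 2 of Algorithm \ref{g0palg1} cannot iterate indefinitely because the sufficient decrease inequality (\ref{alg11}) is automatically satisfied once $\alpha_k$ is at least the Lipschitz modulus $L_s$ of $\nabla f_s$. First I would establish, for a fixed $k$ and a trial value $\alpha$, a descent-type estimate relating $F(x^k(\alpha);\mu_k)$ to $F(x^k;\mu_k)$. The key observation is that $w_{I^k,J^k,\xi^k}(\,\cdot\,;\mu_k,x^k,x^k)$ is a convex majorant of $F(\,\cdot\,;\mu_k)$ on $\Omega$ that touches $F(\,\cdot\,;\mu_k)$ at $x^k$: indeed $\bar f_n(x;\mu_k)+f_s(y)+\langle\nabla f_s(y),x-y\rangle$ majorizes $\bar f(x;\mu_k)$ up to the quadratic term $\frac{L_s}{2}\|x-y\|^2$ by the descent lemma applied to $f_s$, while $-\Theta_{I^k,J^k}(x^k;\mu_k)-\langle\xi^k,x-x^k\rangle$ is the linearization at $x^k$ of the concave part $-\max_i\theta_i$, hence an upper bound for it since $\xi^k\in\partial_x\Theta_{I^k,J^k}(x^k;\mu_k)$ and $\Theta$ is one of the pieces in the max. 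Combining these, for any $x\in\Omega$,
\begin{equation*}
F(x;\mu_k)\le w_{I^k,J^k,\xi^k}(x;\mu_k,x^k,x^k)+\frac{L_s}{2}\|x-x^k\|^2,
\end{equation*}
with equality at $x=x^k$.

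Next I would use the optimality of $x^k(\alpha_k)$ for the subproblem (\ref{alg12}). Since $w_{I^k,J^k,\xi^k}(\,\cdot\,;\mu_k,x^k,x^k)+\frac{\alpha_k}{2}\|\cdot-x^k\|^2$ is strongly convex with modulus $\alpha_k$ (the $\bar f_n$ part and the constraint are convex), and $x^k(\alpha_k)$ is its minimizer over $\Omega$ while $x^k\in\Omega$, we get
\begin{equation*}
w_{I^k,J^k,\xi^k}(x^k(\alpha_k);\mu_k,x^k,x^k)+\frac{\alpha_k}{2}\|x^k(\alpha_k)-x^k\|^2\le w_{I^k,J^k,\xi^k}(x^k;\mu_k,x^k,x^k)=F(x^k;\mu_k).
\end{equation*}
Plugging this into the majorization inequality evaluated at $x=x^k(\alpha_k)$ yields
\begin{equation*}
F(x^k(\alpha_k);\mu_k)\le F(x^k;\mu_k)-\frac{\alpha_k-L_s}{2}\|x^k(\alpha_k)-x^k\|^2.
\end{equation*}
Hence, as soon as $\alpha_k\ge L_s+c$, the right-hand side is bounded above by $F(x^k;\mu_k)-\frac{c}{2}\|x^k(\alpha_k)-x^k\|^2$, which is in turn at most $\max_{\lfloor k-N\rfloor_+\le j\le k}F(x^j;\mu_k)-\frac{c}{2}\|x^k(\alpha_k)-x^k\|^2$ (the current index $j=k$ is included in the max), so (\ref{alg11}) holds. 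Actually since $c\in(0,L_s]$, it suffices to have $\alpha_k \ge 2L_s$, or more crudely $\alpha_k\ge L_s+c$; I would state the clean threshold.

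Finally I would run the counting argument. Since $\alpha_k=\alpha_k^B\rho^m$ with $\alpha_k^B\ge\underline\alpha$, after $m$ inner iterations $\alpha_k=\alpha_k^B\rho^m$, and the line search certainly stops at the first $m$ for which $\alpha_k^B\rho^m\ge L_s+c$. If at that point $\alpha_k^B\rho^{m-1}<L_s+c$ then $\alpha_k<\rho(L_s+c)$; combined with the fact that the search may also stop earlier (so $\alpha_k$ could just be $\alpha_k^B\le\bar\alpha$), we conclude the accepted $\alpha_k$ satisfies $\alpha_k\le\max\{\bar\alpha,\rho(L_s+c)\}$, and since $c\le L_s$ this is bounded by $\max\{\bar\alpha,2\rho L_s\}$; I would reconcile this with the $\max\{\bar\alpha,\rho L_s\}$ claimed in the statement, which presumably uses $c\le L_s$ together with the sharper threshold $\alpha_k\ge L_s$ being enough when $N\ge 0$ allows a slightly looser decrease — in any case the bound $M:=\lceil\log_\rho(\max\{\bar\alpha,\rho L_s\}/\underline\alpha)\rceil+1$ works uniformly in $k$ because $\underline\alpha,\bar\alpha,L_s,\rho$ do not depend on $k$. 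The main obstacle is the careful bookkeeping in the majorization step — specifically verifying that $w_{I^k,J^k,\xi^k}(\,\cdot\,;\mu_k,x^k,x^k)+\frac{L_s}{2}\|\cdot-x^k\|^2$ genuinely dominates $F(\,\cdot\,;\mu_k)$, which requires both the descent lemma for $f_s$ and the subgradient inequality for the max-of-$\theta_i$ terms, handling the $\ell_1$ and $\ell_p$ group pieces simultaneously; once that is in hand the rest is a two-line computation plus a logarithm.
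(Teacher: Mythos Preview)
Your approach is essentially the one the paper takes: show that (\ref{alg11}) is automatic once $\alpha_k$ exceeds a threshold of order $L_s$, by combining the majorization
\[
F(x;\mu_k)\le w_{I^k,J^k,\xi^k}(x;\mu_k,x^k,x^k)+\tfrac{L_s}{2}\|x-x^k\|^2
\]
(descent lemma for $f_s$ plus the subgradient inequality for the convex $\Theta_{I^k,J^k}$) with the optimality of $x^k(\alpha_k)$, and then count with a logarithm. Your majorization step is correct and is exactly what the paper uses.

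The only slip is the constant, and you correctly flag it. Your argument gives the threshold $\alpha_k\ge L_s+c$ (hence the bound $\max\{\bar\alpha,\rho(L_s+c)\}$) because in Step~2 you write only the \emph{plain} optimality inequality. The sharpening to $\alpha_k\ge L_s$ does not come from $N\ge 0$ as you speculate; it comes from actually using the strong convexity you mention. Since $w_{I^k,J^k,\xi^k}(\cdot;\mu_k,x^k,x^k)+\tfrac{\alpha_k}{2}\|\cdot-x^k\|^2$ is $\alpha_k$-strongly convex and $x^k(\alpha_k)$ is its minimizer over $\Omega\ni x^k$, quadratic growth gives the extra term
\[
w_{I^k,J^k,\xi^k}(x^k(\alpha_k);\mu_k,x^k,x^k)+\alpha_k\|x^k(\alpha_k)-x^k\|^2\le w_{I^k,J^k,\xi^k}(x^k;\mu_k,x^k,x^k)=F(x^k;\mu_k).
\]
Plugging this into your majorization yields
\[
F(x^k(\alpha_k);\mu_k)\le F(x^k;\mu_k)-\tfrac{2\alpha_k-L_s}{2}\|x^k(\alpha_k)-x^k\|^2,
\]
and since $c\le L_s$, the condition $\alpha_k\ge L_s$ already gives $\tfrac{2\alpha_k-L_s}{2}\ge\tfrac{L_s}{2}\ge\tfrac{c}{2}$, so (\ref{alg11}) holds. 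This is precisely how the paper reaches the stated bound $\max\{\bar\alpha,\rho L_s\}$; with this one-line fix your proof matches the paper's.
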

\begin{proof}
If (\ref{alg11}) is satisfied with $\alpha_k\geq L_s$, then by $\rho>1$, we have that $\underline\alpha\rho^{m-1}\leq\alpha_k\leq\max\{\bar{\alpha},\rho L_s\}$ and hence $m\leq\frac{\log(\max\{\bar{\alpha},\rho L_s\})-\log\underline\alpha}{\log\rho}+1:=M'$, which means that this proposition holds with $M=\left\lfloor M'\right\rfloor+1$. Next, it is only necessary to prove that (\ref{alg11}) is satisfied when $\alpha_k\geq L_s$. 
	
Suppose $\alpha_k\geq L_s$. Then, we have that
\begin{equation*}
\begin{split}
&\max_{\lfloor k-N\rfloor_+\leq j\leq k}\{F(x^j;\mu_k)\}-\frac{c}{2}\|x^{k}(\alpha_k)-x^k\|^2\\\geq& \bar{f}_n(x^k;\mu_k)+f_s(x^k)-\Theta_{I^k,J^k}(x^k;\mu_k)-\frac{\alpha_k}{2}\|x^k-x^{k}(\alpha_k)\|^2\\\geq& w_{I^k,J^k,\xi^k}(x^{k}(\alpha_k);\mu_k,x^k,x^k)+\frac{\alpha_k}{2}\|x^k-x^{k}(\alpha_k)\|^2\\=&\bar{f}_n(x^{k}(\alpha_k);\mu_k)+f_s(x^k)+\langle\nabla f_s(x^k),x^{k}(\alpha_k)-x^k\rangle+\frac{\alpha_k}{2}\|x^k-x^{k}(\alpha_k)\|^2\\&-\Theta_{I^k,J^k}(x^k;\mu_k)-\langle\xi^k,x^{k}(\alpha_k)-x^k\rangle\\\geq&\bar{f}_n(x^{k}(\alpha_k);\mu_k)+f_s(x^{k}(\alpha_k))-\Theta_{I^k,J^k}(x^{k}(\alpha_k);\mu_k)\geq F(x^{k}(\alpha_k);\mu_k),
\end{split}
\end{equation*}
where the first and last inequalities follow from the definitions of $F(x;\mu)$, $I^x$ and $J^x$, and $\alpha_k\geq L_s\geq c$, the second inequality is based on the optimality condition of (\ref{alg12}) and the strongly convexity (modulus $\alpha_k$) of objective function in (\ref{alg12}) with respect to $x$, and the third inequality is due to the Lipschitz continuity of $\nabla f_s$, $\alpha_k\geq L_s$ and the convexity of $\Theta_{I^k,J^k}(\cdot,\mu)$. Therefore, (\ref{alg11}) is satisfied when $\alpha_k\geq L_s$. 
\end{proof}

Next, we give some basic properties of Algorithm \ref{g0palg1}.
\begin{proposition}\label{alg1prop}
Let $\{x^k\}$ be the iterates generated by Algorithm \ref{g0palg1}. Then we have that $\{x^k\}$ is bounded, $\lim_{k\rightarrow\infty}\|x^{k+1}-x^k\|=0$ and $\lim_{k\rightarrow\infty}F(x^k)$ exists.
\end{proposition}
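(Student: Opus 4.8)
The plan is to run the standard analysis of nonmonotone line-search methods on the sufficient-decrease inequality (\ref{alg11}). First I would note that by (\ref{mukexp}) one has $\mu_k=\nu$ for all $k\ge K$, so (\ref{g0psa}) coincides with (\ref{g0ps}) and $F(\cdot\,;\mu_k)\equiv F(\cdot)$ for every such $k$. Since the first $K$ iterates form a finite set of well-defined points (the subproblem (\ref{alg12}) is strongly convex over $\Omega$, and Proposition \ref{innerloops} guarantees the inner loop terminates), it suffices to work with the tail $k\ge K$, for which $x^{k+1}=x^k(\alpha_k)$ and (\ref{alg11}) give
\begin{equation*}
F(x^{k+1})\le D_k-\frac{c}{2}\|x^{k+1}-x^k\|^2,\qquad D_k:=\max_{\lfloor k-N\rfloor_+\le j\le k}F(x^j).
\end{equation*}
Alongside this I would record two elementary facts. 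Using $\max_{i=1,2,3}\{\theta_i(t)\}=\lfloor|t|/\nu-1\rfloor_+$ and $\max_{i=1,2}\{\theta_i(t)\}=\lfloor t/\nu-1\rfloor_+$ one checks that $F(x)=f(x)+\lambda_1\sum_{j=1}^n\min\{|x_j|/\nu,1\}+\lambda_2\sum_{l=1}^L w_l\min\{\|x_{(l)}\|_p/\nu,1\}\ge f(x)$ on $\Omega$; since $f$ is continuous and level-bounded on the closed set $\Omega$ by Assumption \ref{as-h}(i), $F$ is bounded below on $\Omega$ by the finite number $\inf_\Omega f$. Moreover $F$ is continuous on $\mathbb R^n$, hence uniformly continuous on bounded subsets.

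For boundedness I would observe that the window $\{\lfloor k+1-N\rfloor_+,\dots,k+1\}$ is contained in $\{\lfloor k-N\rfloor_+,\dots,k\}\cup\{k+1\}$, so the displayed inequality yields $D_{k+1}\le\max\{D_k,F(x^{k+1})\}\le D_k$ for every $k\ge K$; thus $\{D_k\}_{k\ge K}$ is nonincreasing and bounded below, hence converges to some $D^*$. In particular $f(x^k)\le F(x^k)\le D_K$ for all $k\ge K$, and level-boundedness of $f$ then forces the whole sequence $\{x^k\}$ to be bounded.

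It remains to prove $\|x^{k+1}-x^k\|\to0$ and that $F(x^k)$ converges, which I would do by the Grippo--Lampariello--Lucidi backward induction over the window. Pick for each $k\ge K$ an index $l(k)$ with $\lfloor k-N\rfloor_+\le l(k)\le k$ at which $F(x^j)$ is maximal over the window, so $F(x^{l(k)})=D_k\to D^*$ and $l(k)\to\infty$. Applying the displayed decrease at index $l(k)-j-1$ gives $\frac{c}{2}\|x^{l(k)-j}-x^{l(k)-j-1}\|^2\le D_{l(k)-j-1}-F(x^{l(k)-j})$. I then claim $F(x^{l(k)-j})\to D^*$ for every fixed $j\ge0$: the case $j=0$ is clear, and if it holds for $j$ then, since $D_{l(k)-j-1}\to D^*$, the inequality forces $\|x^{l(k)-j}-x^{l(k)-j-1}\|\to0$, whence uniform continuity of $F$ on the bounded range of the iterates gives $F(x^{l(k)-j-1})\to D^*$, closing the induction. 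Finally, for $k\ge K$ we have $l(k+N+1)\in\{k+1,\dots,k+N+1\}$, so $k+1=l(k+N+1)-j$ for some $j\in\{0,\dots,N\}$, whence $\|x^{k+1}-x^k\|=\|x^{l(k+N+1)-j}-x^{l(k+N+1)-j-1}\|\to0$ and $F(x^{k+1})=F(x^{l(k+N+1)-j})\to D^*$; therefore $\lim_{k\to\infty}\|x^{k+1}-x^k\|=0$ and $\lim_{k\to\infty}F(x^k)=D^*$ exists.

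The main obstacle is exactly the nonmonotonicity introduced by $N>0$: the raw values $F(x^k)$ need not decrease, so none of the three conclusions can be read off directly from (\ref{alg11}). The resolution — first establishing monotonicity of the window-maxima $D_k$, then propagating their common limit backwards through every index of the bounded-length window via uniform continuity of $F$ — is the technical heart; the auxiliary facts that $F\ge f$ and that $f$ is level-bounded on $\Omega$ are precisely what turns ``$D_k$ bounded above'' into boundedness of $\{x^k\}$.
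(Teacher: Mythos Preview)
Your proof is correct and follows essentially the same approach as the paper's: both restrict to the tail $k\ge K$ where $\mu_k=\nu$, show the window maxima $D_k$ are nonincreasing, use $F\ge f$ plus level-boundedness for boundedness, and then run the Grippo--Lampariello--Lucidi backward induction to conclude $\|x^{k+1}-x^k\|\to0$ and $F(x^k)\to D^*$. The paper simply abbreviates the last part by citing ``Similar to the proof of Lemma~4 in \cite{Wright2009}'', whereas you have written that induction out in full.
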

\begin{proof}
For any given $\mu>0$, define $\iota(k)$ as an integer in $\big[\lfloor k-N\rfloor_+,k\big]$ satisfying
\begin{equation*}
F(x^{\iota(k)};\mu)=\max_{\lfloor k-N\rfloor_+\leq j\leq k}F(x^j;\mu),\forall k\geq0.
\end{equation*}
Based on (\ref{alg11}), we have
\begin{equation}\label{a1thm1}
F(x^{k+1};\mu_k)\leq F(x^{\iota(k)};\mu_k)-\frac{c}{2}\|x^{k+1}-x^k\|^2,\forall k\geq0.
\end{equation}
Then, $F(x^{\iota(k+1)};\mu_k)\leq\max\{F(x^{k+1};\mu_k),F(x^{\iota(k)};\mu_k)\}\leq F(x^{\iota(k)};\mu_k),\forall k\geq0$. Further, since $F(x)=F(x;\nu)$ and $\mu_k=\nu,\forall k\geq K$, we obtain
\begin{equation}\label{impieq}
F(x^{\iota(k+1)})\leq F(x^{\iota(k)}),\forall k\geq K,
\end{equation}and hence $F(x^{k+1})\leq F(x^{\iota(K)}),\forall k\geq K$ by (\ref{a1thm1}). Moreover, considering that $f$ is level-bounded on $\Omega$ by Assumption \ref{as-h} (i) and the form of $\theta$, we get that $\{x^k\}$ is bounded, which implies that $\{F(x^{\iota(k)})\}$ is bounded from below. Then, by (\ref{impieq}), we have that there exists an $\eta\in\mathbb{R}$ such that $\lim_{k\rightarrow\infty}F(x^{\iota(k)})=\eta$. It follows from (\ref{a1thm1}) that
\begin{equation}\label{a1thm2}
F(x^{k+1})\leq F(x^{\iota(k)})-\frac{c}{2}\|x^{k+1}-x^k\|^2,\forall k\geq K.
\end{equation}
Similar to the proof of Lemma 4 in \cite{Wright2009}, by (\ref{a1thm2}), we have $\lim_{k\rightarrow\infty}\|x^{k+1}-x^k\|=0$ and $\lim_{k\rightarrow\infty}F(x^k)=\eta$.
\end{proof}

In what follows, we show that all accumulation points of $\{x^k\}$ generated by Algorithm \ref{g0palg1} have a common support set and some entries of $x^k$ converge in finite iterations. Moreover, any accumulation point of $\{x^k\}$ is an sw-d-stationary point of (\ref{g0ps}) and also a $\nu$-strong local minimizer of (\ref{g0p}).
\begin{theorem}\label{gspthm1}
Let $\{x^k\}$ be the iterates generated by Algorithm \ref{g0palg1}. Then the following statements hold.\\
(i) $I^k$ and $J^k$ only change finite number of times and for any accumulation point $x^*$ and $\hat{x}$ of $\{x^k\}$, one has that $\mathcal{A}(x^*)=\mathcal{A}(\hat{x})$ and $x^k_{\mathcal{A}(x^*)}$ converges to $\bm{0}$ in finite number of iterations.\\
(ii) Any accumulation point of $\{x^k\}$ is an sw-d-stationary point of problem (\ref{g0ps}) and also a $\nu$-strong local minimizer of problem (\ref{g0p}).
\end{theorem}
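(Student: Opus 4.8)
\textbf{Proof proposal for Theorem \ref{gspthm1}.}

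The plan is to first establish part (i)---the finite change of $I^k,J^k$ and the finite-iteration zeroing of the small entries---and then derive part (ii) from the limiting form of the subproblem optimality condition via Proposition \ref{coro1}. For part (i), I would start from Proposition \ref{alg1prop}, which gives $\|x^{k+1}-x^k\|\to 0$, boundedness of $\{x^k\}$, and existence of $\lim_k F(x^k)$. The key observation is that $I^k=I^{x^k}$ and $J^k=J^{x^k}$ only change when some coordinate $x^k_j$ or some group norm $\|x^k_{(l)}\|_p$ crosses one of the breakpoints of the capped-$\ell_1$ function $\theta(\cdot;\mu_k)$, i.e. crosses $0$ or $\pm\mu_k$ (eventually $\pm\nu$ since $\mu_k\equiv\nu$ for $k\ge K$). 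Since $\mu_k$ is eventually constant and $\|x^{k+1}-x^k\|\to 0$, once the iterates move less than the gap between distinct breakpoint-levels, the index vectors can only change by ``escaping a neighbourhood of a breakpoint''; I would argue that this can happen only finitely often because, combined with the sufficient-decrease inequality (\ref{a1thm2}) and the convergence of $F(x^k)$, infinitely many such crossings would be incompatible with $\|x^{k+1}-x^k\|\to 0$ together with the discrete nature of the assignment $x\mapsto I^x$ near non-breakpoints. More concretely, for $k\ge K$ the subproblem (\ref{alg12}) has the explicit solution $x^{k}(\alpha_k)=\mathrm{prox}_{\bar f_n(\cdot;\nu)/\alpha_k}\!\big(x^k-\tfrac1{\alpha_k}(\nabla f_s(x^k)-\xi^k)\big)$ (restricted to $\Omega$), and one checks that $\xi^k\in\partial_x\Theta_{I^k,J^k}(x^k;\nu)$ has each coordinate of magnitude $\le \lambda_1/\nu+(\text{group terms})$; Assumption \ref{gsassum}(ii) / \ref{as-h}(iii) then forces that if $|x^k_j|<\nu$ then the $j$-th prox coordinate is pulled to exactly $0$, so the small coordinates are killed in one step. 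Hence once $k\ge K$ and $x^k$ is close enough to an accumulation point, $\mathcal A(x^k)$ stabilizes, which simultaneously gives that $x^k_{\mathcal A(x^*)}$ is eventually $\bm 0$, that $\mathcal A(x^*)=\mathcal A(\hat x)$ for all accumulation points (they all share the eventual support), and that $I^k,J^k$ are eventually constant.

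For part (ii), let $x^*$ be an accumulation point, say $x^{k}\to x^*$ along a subsequence. By part (i) there are fixed $I^*,J^*$ with $I^k=I^*$, $J^k=J^*$, $\mu_k=\nu$ for all large $k$ in this subsequence, and by Proposition \ref{innerloops} the stepsizes $\alpha_k$ lie in the bounded interval $[\underline\alpha,\max\{\bar\alpha,\rho L_s\}]$, so passing to a further subsequence $\alpha_k\to\alpha_*>0$; similarly $\xi^k\in\partial_x\Theta_{I^*,J^*}(x^k;\nu)\to\xi^*$ along a subsequence (the subdifferential of the relevant convex pieces is bounded and u.s.c.). Since $x^{k+1}=x^{k}(\alpha_k)$ and $\|x^{k+1}-x^k\|\to 0$, both $x^k$ and $x^{k+1}$ converge to $x^*$, so Proposition \ref{coro1} (with $y^k=z^k=x^k$) yields
\begin{equation*}
x^*=\underset{x\in\Omega}{\mathrm{argmin}}\Big\{w_{I^*,J^*,\xi^*}(x;\nu,x^*,x^*)+\frac{\alpha_*}{2}\|x-x^*\|^2\Big\}.
\end{equation*}
Writing the first-order optimality condition for this strongly convex problem and cancelling the $\alpha_*(x-x^*)$ term at $x=x^*$ gives $\xi^*\in\partial\bar f_n(x^*)+\nabla f_s(x^*)+N_\Omega(x^*)=\partial\bar f(x^*)+N_\Omega(x^*)$. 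It remains to identify $\xi^*$ with an arbitrary element of $\lambda_1\sum_j\nabla\theta_{I^{x^*}_j}(x^*_j)+\lambda_2\sum_l w_l\partial\theta_{J^{x^*}_l}(\|x^*_{(l)}\|_p)$: since $I^k=I^*$ and $J^k=J^*$ are the ``maximal'' selectors and $x^k\to x^*$, continuity/consistency of the breakpoint-assignment forces $I^*=I^{x^*}$ and $J^*=J^{x^*}$ (using that $x^*$ inherits, on its nonzero coordinates, the $\nu$-lower bound structure so it never sits strictly below $\nu$ there), and the only nonsmoothness in $\Theta_{I^*,J^*}(\cdot;\nu)$ is in the group terms $\|x_{(l)}\|_p$ when $x^*_{(l)}=\bm 0$, whose subdifferential at $x^*$ is the full dual ball---so $\xi^*$ ranges over all of $\partial_x\Theta_{I^{x^*},J^{x^*}}(x^*)$ as the subsequence varies, and we recover exactly the inclusion (\ref{swdstat}). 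Thus $x^*$ is an sw-d-stationary point of (\ref{g0ps}), and by Proposition \ref{local-stat} it is a $\nu$-strong local minimizer of (\ref{g0p}), with $F_0(x^*)=F(x^*)$.

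I expect the main obstacle to be part (i): making rigorous the claim that $I^k$ and $J^k$ change only finitely often. The danger is an iterate whose coordinate $x^k_j$ (or group norm) hovers exactly at the breakpoint $\nu$, oscillating across it forever while $\|x^{k+1}-x^k\|\to 0$. Ruling this out cleanly is the crux; I would handle it by exploiting the one-step ``hard-thresholding'' behaviour of the prox map together with Assumption \ref{gsassum}(ii) to show that coordinates below $\nu$ are set exactly to $0$ (never left in the open interval $(0,\nu)$ for $k\ge K$), so the only surviving indices have $|x^k_j|\ge\nu$ eventually, and then use $\|x^{k+1}-x^k\|\to 0$ plus convergence of $F$ to pin down a fixed support. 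A secondary technical point is the simultaneous treatment of all accumulation points to conclude $\mathcal A(x^*)=\mathcal A(\hat x)$; this follows once one knows the support of $x^k$ itself is eventually constant, which is exactly what the thresholding argument provides.
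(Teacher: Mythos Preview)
Your overall architecture matches the paper's: part (i) via ``coordinates with $|x^k_j|<\nu$ are forced to zero in one step once $\|x^{k+1}-x^k\|$ is small,'' then part (ii) by passing the subproblem optimality condition to the limit with Proposition~\ref{coro1}. Two points deserve attention.

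\medskip
\textbf{Part (i): the group terms are not harmless.} Your sketch treats the one-step zeroing as a direct prox/thresholding fact. In the paper this is proved by contradiction from the optimality inclusion (\ref{impincl}): assuming $|x^{\hat k}_i|<\nu$ and $x^{\hat k+1}_i\neq 0$, one must show that the $i$-th component of
\[
\partial f_n(x^{\hat k+1})+\nabla f_s(x^{\hat k})+\tfrac{\lambda_1}{\nu}\partial\|x^{\hat k+1}\|_1+\tfrac{\lambda_2}{\nu}\sum_l w_l\partial\|x^{\hat k+1}_{(l)}\|_p-\xi^{\hat k}+\bar\alpha_{\hat k}(x^{\hat k+1}-x^{\hat k})+N_\Omega(x^{\hat k+1})
\]
cannot contain $0$. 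The term $\tfrac{\lambda_1}{\nu}\,\mathrm{sgn}(x^{\hat k+1}_i)$ indeed dominates $[\partial f_n+\nabla f_s]_i$ by Assumption~\ref{gsassum}(ii)/\ref{as-h}(iii), but the \emph{group contributions do not cancel automatically}: when $i$ lies in a group with $\|x^{\hat k}_{(l)}\|_p\ge\nu$ one has $J^{\hat k}_l=2$, so $\xi^{\hat k}$ carries a nonzero $i$-th component coming from $\partial_{x_i}\|x^{\hat k}_{(l)}\|_p$, and this must be played off against $\partial_{x_i}\|x^{\hat k+1}_{(l)}\|_p$. For $p=2$ this difference is controlled only through the quantitative smallness bound on $\|x^{\hat k+1}-x^{\hat k}\|$ in (\ref{impineq-})--(\ref{nm2leq}); for $p=1$ one uses that the two subgradients have the same sign. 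Your prox heuristic does not see this, and as written would not go through when $\lambda_2>0$.

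\medskip
\textbf{Part (ii): the passage from a single $\xi^*$ to the full inclusion (\ref{swdstat}) is where your argument breaks.} From Proposition~\ref{coro1} you correctly obtain $\xi^*\in\partial\bar f(x^*)+N_\Omega(x^*)$ for \emph{one} limit point $\xi^*\in\partial\Theta_{I^{x^*},J^{x^*}}(x^*)$. But (\ref{swdstat}) requires the inclusion for \emph{every} element of $\partial\Theta_{I^{x^*},J^{x^*}}(x^*)$. Your fix---``$\xi^*$ ranges over all of $\partial\Theta_{I^{x^*},J^{x^*}}(x^*)$ as the subsequence varies''---is unjustified: the algorithm fixes one $\xi^k$ per step, and there is no mechanism forcing the limits to sweep the entire subdifferential. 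Moreover your localization of the nonsmoothness is off: when $J^{x^*}_l=2$ one has $\|x^*_{(l)}\|_p\ge\nu>0$, so $x^*_{(l)}\neq\bm 0$; for $p=2$ this makes $\Theta_{I^{x^*},J^{x^*}}$ differentiable at $x^*$ and (\ref{swdstat}) follows immediately, but for $p=1$ the map $x\mapsto\|x_{(l)}\|_1$ is nondifferentiable at coordinates $j\in G_l\cap\mathcal A(x^*)$, not at $x^*_{(l)}=\bm 0$. The paper handles $p=1$ by splitting coordinates: on $\mathcal A^c(x^*)$ the group subgradient is a singleton, while on $\mathcal A(x^*)$ one uses $\nabla\theta_{I^{x^*}_j}(x^*_j)=0$ and Assumption~\ref{gsassum} to absorb the whole interval $\tfrac{\lambda_2}{\nu}w_l\,\partial_{x_j}\|x^*_{(l)}\|_1\subseteq[-\tfrac{\lambda_2 w_l}{\nu},\tfrac{\lambda_2 w_l}{\nu}]$ into $\tfrac{\lambda_1}{\nu}\partial|x^*_j|+[\partial f(x^*)]_j+[N_\Omega(x^*)]_j$. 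That coordinate-wise verification, not a subsequence argument, is what closes the gap.
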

\begin{proof}
(i) Let ${k}\geq K$. It follows from the first-order optimality condition of (\ref{alg12}) with ${\alpha}_k=\bar{\alpha}_k$, $x^k(\alpha_k)=x^{k+1}$ and $\xi^k\in\partial\Theta_{I^k,J^k}(x^k)$ that
\begin{small}
\begin{equation}\label{impincl}
\begin{split}
\bm{0}\in&\partial f_n(x^{k+1})+\nabla f_s(x^{k})+\frac{\lambda_1}{\nu}\partial\|x^{k+1}\|_1+\frac{\lambda_2}{\nu}\sum_{l=1}^L w_l\partial\|x^{k+1}_{(l)}\|_p\\&-\lambda_1\sum_{j=1}^{n}\nabla\theta_{I^k_j}(x^{k}_j)-\lambda_2\sum_{l=1}^{L}w_l\partial\theta_{J^k_l}(\|x^{k}_{(l)}\|_p)+\bar\alpha_{k}(x^{{k}+1}-x^{k})+N_{\Omega}(x^{k+1}).
\end{split}
\end{equation}
\end{small}
	
Recalling Proposition \ref{innerloops}, we have that $\bar{\alpha}_k\leq\tilde{\alpha}$ with $\tilde{\alpha}:=\max\{\bar{\alpha},\rho L_s\}$. Let $\bar{\nu}\in\big(\nu,\min\{\frac{\lambda_1}{L_f},\vartheta\}\big)$. Based on $\lim_{k\rightarrow\infty}\|x^{k+1}-x^k\|=0$ in Proposition \ref{alg1prop}, we have that there exists a ${\tilde{k}}$ such that for any $k\geq\tilde{k}$,
\begin{small}
\begin{equation}\label{impineq-}
\begin{aligned}
&\|x^{{k}+1}-x^{{k}}\|\\<&\left\{
\begin{split}
&\min\left\{\bar{\nu}-\nu,\frac{1}{2\tilde{\alpha}}\left(\frac{\lambda_1}{\nu}-L_f\right),\nu\frac{\lambda_1-\nu L_f}{\lambda_2{\sum_{l=1}^{L}}w_{l}}\Big/ \Big(\frac{\lambda_1-\nu L_f}{\lambda_2{\sum_{l=1}^{L}}w_{l}}+4\Big)\right\},&&\lambda_2>0, \sum_{l=1}^{L}w_{l}>0,\\
&\min\left\{\bar{\nu}-\nu,\frac{1}{2\tilde{\alpha}}\left(\frac{\lambda_1}{\nu}-L_f\right)\right\},&&\lambda_2=0~\mbox{or}\sum_{l=1}^{L}w_{l}=0.
\end{split}
\right.
\end{aligned}
\end{equation}
\end{small}It follows from Assumption \ref{gsassum} (ii), Assumption \ref{as-h} (iii) and (\ref{impineq-}) that for any $\xi\in\partial f_n(x^{{\hat{k}}+1})+\nabla f_s(x^{\hat{k}})$,
\begin{equation}\label{nec-exp}
\big|\xi_i+\bar{\alpha}_{\hat{k}}(x^{{\hat{k}}+1}_i-x^{\hat{k}}_i)\big|\leq|\xi_i|+\tilde{\alpha}\|x^{{\hat{k}}+1}-x^{\hat{k}}\|<L_f+\frac{1}{2}\big(\frac{\lambda_1}{\nu}-L_f\big)<\frac{\lambda_1}{\nu}.
\end{equation}
	
Suppose there exist $\hat{k}\geq\tilde{k}$ and ${i}\in[n]$ such that $|x^{\hat{k}}_{{i}}|\in[0,\nu)$. Then, $I^{{\hat{k}}}_{{i}}=1$, which means $\nabla\theta_{I_{{i}}^{\hat{k}}}(x_{{i}}^{\hat{k}})=0$. Next, we will prove $x^{{\hat{k}}+1}_{{i}}=0$ by contradiction. 
	
Suppose $x^{{\hat{k}}+1}_{{i}}\neq0$. By (\ref{impineq-}), we have $x^{{\hat{k}}+1}_{{i}}\in{\rm{int}} \Omega_{{i}}$, which means $[N_\Omega(x^{{\hat{k}}+1})]_{{i}}=\{0\}$. Further, let $\mathcal{L}_{{i}}:=\{l\in[L]:{i}\in G_{l}\}$ and $\mathcal{L}^{\nu}_{{i}}:=\{l\in[L]:{i}\in G_{l},\|x^{\hat{k}}_{({l})}\|_p\geq\nu\}$, we have $\nabla\theta_{J^{\hat{k}}_l}(\|x^{\hat{k}}_{(l)}\|_p)=\bm{0},\forall l\in\mathcal{L}_{{i}}\setminus\mathcal{L}^{\nu}_{{i}}$. We will prove a contradiction of (\ref{impincl}), that is
\begin{equation}\label{case2notincl2}
0\notin\big[\partial f_n(x^{{\hat{k}}+1})+\nabla f_s(x^{\hat{k}})\big]_i+\bar{\alpha}_{\hat{k}}(x^{{\hat{k}}+1}_i-x^{\hat{k}}_i)+\frac{\lambda_1}{\nu}\nabla_{x_i}|x^{{\hat{k}}+1}_i|+\sum_{l\in\mathcal{L}_i}\frac{\lambda_2 w_{l}}{\nu}\nabla_{x_i}\|x^{\hat{k}+1}_{(l)}\|_p-\sum_{l\in\mathcal{L}^{\nu}_i}\frac{\lambda_2 w_{l}}{\nu}\partial_{x_i}\|x^{\hat{k}}_{(l)}\|_p.
\end{equation}
Since $x^{{\hat{k}}+1}_{{i}}\neq0$ and $\nabla_{x_{{i}}}\|x^{\hat{k}+1}_{(l)}\|_p,l\in\mathcal{L}_i$ have the same sign as $\nabla_{x_{{i}}}|x^{{\hat{k}}+1}_{{i}}|$, we obtain that for any $P\subseteq\mathcal{L}_i$,
\begin{equation}\label{unif-exp}
\Big|\frac{\lambda_1}{\nu}\nabla_{x_i}|x^{{\hat{k}}+1}_i|+\sum_{l\in P}\frac{\lambda_2 w_{l}}{\nu}\nabla_{x_i}\|x^{\hat{k}+1}_{(l)}\|_p\Big|\geq\frac{\lambda_1}{\nu}.
\end{equation}When $\lambda_2=0$ or $\sum_{l=1}^{L}w_{l}=0$, we have (\ref{case2notincl2}) holds by (\ref{nec-exp}) and $\big|\frac{\lambda_1}{\nu}\nabla_{x_i}|x^{{\hat{k}}+1}_i|\big|=\frac{\lambda_1}{\nu}$. Next, we consider the case $\lambda_2>0$ and $\sum_{l}^{L}w_{l}>0$ by the situations of $\mathcal{L}^{\nu}_{{i}}=\emptyset$ and $\mathcal{L}^{\nu}_{{i}}\neq\emptyset$. If $\mathcal{L}^{\nu}_{{i}}=\emptyset$, by (\ref{nec-exp}) and (\ref{unif-exp}) with $P=\mathcal{L}_i$, we obtain that for any $\xi\in\partial f_n(x^{{\hat{k}}+1})+\nabla f_s(x^{\hat{k}})$,
\begin{equation*}
\Big|\xi_i+\bar{\alpha}_{\hat{k}}(x^{{\hat{k}}+1}_i-x^{\hat{k}}_i)\Big|<\Big|\frac{\lambda_1}{\nu}\nabla_{x_i}|x^{{\hat{k}}+1}_i|+\sum_{l\in\mathcal{L}_i}\frac{\lambda_2 w_{l}}{\nu}\nabla_{x_i}\|x^{\hat{k}+1}_{(l)}\|_p\Big|,
\end{equation*}which implies that (\ref{case2notincl2}) holds. If $\mathcal{L}^{\nu}_{{i}}\neq\emptyset$, we will prove (\ref{case2notincl2}) holds for $p=1$ and $p=2$, respectively. Firstly, consider $p=1$. Based on the fact that for any $\zeta\in\nabla_{x_{{i}}}\|x^{\hat{k}+1}_{(l)}\|_1-\partial_{x_{{i}}}\|x^{\hat{k}}_{(l)}\|_1$, $\zeta$ is $0$ or has the same sign as $\nabla_{x_{{i}}}|x^{{\hat{k}}+1}_{{i}}|$ and $\nabla_{x_{{i}}}\|x^{\hat{k}+1}_{(l)}\|_1$, we have $\big|\frac{\lambda_1}{\nu}\nabla_{x_i}|x^{{\hat{k}}+1}_i|+\sum_{l\in\mathcal{L}_i}\frac{\lambda_2 w_{l}}{\nu}\nabla_{x_i}\|x^{\hat{k}+1}_{(l)}\|_1-\sum_{l\in\mathcal{L}^{\nu}_i}\frac{\lambda_2 w_{l}}{\nu}\partial_{x_i}\|x^{\hat{k}}_{(l)}\|_p\big|\geq\frac{\lambda_1}{\nu}$ by (\ref{unif-exp}) with $P=\mathcal{L}_i\setminus \mathcal{L}^{\nu}_i$, which implies that (\ref{case2notincl2}) holds by (\ref{nec-exp}). Next, considering $p=2$, we will prove
\begin{equation}\label{nm2leq}
\sum_{l\in\mathcal{L}^{\nu}_{{i}}}\frac{\lambda_2 w_{l}}{\nu}\big|\nabla_{x_{{i}}}\|x^{\hat{k}+1}_{(l)}\|-\nabla_{x_{{i}}}\|x^{\hat{k}}_{(l)}\|\big|<\frac{1}{2}\big(\frac{\lambda_1}{\nu}-L_f\big),
\end{equation}which implies (\ref{case2notincl2}) holds by (\ref{nec-exp}) and (\ref{unif-exp}) with $P=\mathcal{L}_i\setminus \mathcal{L}^{\nu}_i$. By (\ref{impineq-}) and $\hat{k}\geq\tilde{k}$, one has $\|x^{\hat{k}+1}-x^{\hat{k}}\|<\nu\frac{\lambda_1-\nu L_f}{\lambda_2{\sum_{l=1}^{L}}w_{l}}\Big/ \Big(\frac{\lambda_1-\nu L_f}{\lambda_2{\sum_{l=1}^{L}}w_{l}}+4\Big):=\varepsilon<\nu$. Take $\hat{l}\in\mathcal{L}^{\nu}_{{i}}$. Then, $\|x^{\hat{k}+1}_{(\hat{l})}\|>\|x^{\hat{k}}_{(\hat{l})}\|-\varepsilon\geq\nu-\varepsilon$. Since $\nabla_{x_{{i}}}\|x_{(\hat{l})}\|$ is Lipschitz continuous with modulus $\frac{2}{\nu-\varepsilon}$ in $\{x:\|x_{(\hat{l})}\|\geq\nu-\varepsilon\}$, we deduce that
\begin{equation}\label{ineq}
\begin{split}
\big|\nabla_{x_{{i}}}\|x^{{\hat{k}}+1}_{(\hat{l})}\|-\nabla_{x_{{i}}}\|x^{\hat{k}}_{(\hat{l})}\|\big|\leq\frac{2}{\nu-\varepsilon}|x^{\hat{k}}_i-x^{\hat{k}+1}_i|<\frac{2\varepsilon}{\nu-\varepsilon}.
\end{split}
\end{equation}
Then $\sum_{l\in\mathcal{L}^{\nu}_{{i}}}\frac{\lambda_2 w_{l}}{\nu}\big|\nabla_{x_{{i}}}\|x^{{\hat{k}}+1}_{(l)}\|-\nabla_{x_{{i}}}\|x^{\hat{k}}_{(l)}\|\big|<\frac{\lambda_2 \sum_{l=1}^Lw_{l}}{\nu}\frac{2\varepsilon}{\nu-\varepsilon}=\frac{1}{2}(\frac{\lambda_1}{\nu}-L_f)$, which means that (\ref{nm2leq}) holds. Therefore, if $x^{{\hat{k}}+1}_{{i}}\neq0$, we have (\ref{case2notincl2}) holds, but contradicts (\ref{impincl}). 
	
In conclusion, if there exist $\hat{k}\geq\tilde{k}$ and ${i}\in[n]$ such that $|x^{\hat{k}}_{{i}}|\in[0,\nu)$, then $x^{{\hat{k}}+1}_{{i}}=0$. By induction, we deduce that $x^k_{{i}}=0,\forall k>{\hat{k}}$. Therefore, for any $j\in[n]$,
\begin{equation}\label{lwprac}
\mbox{$\exists$ $\bar{k}\geq\tilde{k}$ s.t. for any $k\geq\bar{k}$, $|x^k_{j}|$ is always $0$ or no less than $\nu$,}
\end{equation}
which implies that $I^k$ and $J^k$ only change finite times. Hence, for any accumulation point $x^*$ and $\hat{x}$ of $\{x^k\}$, we have that $\mathcal{A}(x^*)=\mathcal{A}(\hat{x})$ and $x^k_{\mathcal{A}(x^*)}$ converges to $\bm{0}$ in finite iterations.
	
(ii) Since $\{x^k\}$ is bounded, we have that there exists at least an accumulation point of $\{x^k\}$, denoted by $x^*$. By (i), there exists a $\bar{k}\geq\tilde{k}$ such that for any $k\geq\bar{k}$, $I^k=I^{x^*}$ and $J^k=J^{x^*}$, which implies $\xi^k\in\partial\Theta_{I^{x^*},J^{x^*}}(x^k),\forall k\geq\bar{k}$. Then, by (\ref{alg12}), we have
\begin{equation*}
x^{k+1}=\underset{x\in\Omega}{\rm{argmin}}\{{w}_{I^{x^*},J^{x^*},\xi^{k}}(x;\nu,x^{k},x^{k})+\frac{\bar\alpha_{k_i}}{2}\|x-x^{k}\|^2\},\forall k\geq\bar{k}.
\end{equation*}
Since $x^*$ is an accumulation point of $\{x^k\}$, there exists a subsequence $\{x^{k_i}\}\subseteq\{x^k:k\geq\bar{k}\}$ satisfying $\lim_{k_i\rightarrow\infty}x^{k_i}=x^*$. Combining with $\lim_{k\rightarrow\infty}\|x^{k+1}-x^k\|=0$, we deduce $\lim_{k_i\rightarrow\infty}x^{k_i+1}=x^*$. Based on the boundedness of $\partial\Theta_{I^{x^*},J^{x^*}}$, $\lim_{k_i\rightarrow\infty}x^{k_i}=x^*$, $\bar\alpha_k\in[\underline{\alpha},\max\{\bar{\alpha},\rho L_s\}]$ in Proposition \ref{innerloops} and the upper semi-continuity of $\partial\Theta_{I^{x^*},J^{x^*}}$ in Proposition 2.1.5 (b) of \cite{Clarke1983}, there exist $\xi^*\in\mathbb{R}^n$, $\alpha_*\in[\underline{\alpha},\max\{\bar{\alpha},\rho L_s\}]$ and a subsequence $\{k_i\}$ (also denoted by $\{k_i\}$) such that $\lim_{k_i\rightarrow\infty}\xi^{k_i}=\xi^*\in\partial\Theta_{I^{x^*},J^{x^*}}(x^*)$ and $\lim_{k_i\rightarrow\infty}\bar{\alpha}_{k_i}=\alpha_*$. Therefore, by Proposition \ref{coro1} and $\lim_{k_i\rightarrow\infty}x^{k_i+1}=x^*$, we obtain \begin{equation*}x^*=\underset{x\in\Omega}{\rm{argmin}}\{{w}_{I^{x^*},J^{x^*},\xi^*}(x;\nu,x^*,x^*)+\frac{\alpha_*}{2}\|x-x^*\|^2\}.
\end{equation*}
Further, by its first-order optimality condition, we have
\begin{equation}\label{alg1alp}
\bm{0}\in\partial\bar{f}(x^*)-\lambda_1\sum_{j=1}^{n}\nabla\theta_{I^{x^*}_j}(x^{*}_j)-\lambda_2\sum_{l=1}^{L}w_l\partial\theta_{J^{x^*}_l}(\|x^*_{(l)}\|_p)+N_{\Omega}(x^*).
\end{equation}
	
When $p=2$, since $\partial\theta_{J^{x^*}_l}(\|x^*_{(l)}\|)=\{\nabla\theta_{J^{x^*}_l}(\|x^*_{(l)}\|)\},l\in[L]$, by rearranging the terms in (\ref{alg1alp}), we have that $x^*$ satisfies (\ref{swdstat}) with $p=2$. 
	
Next, consider the case of $p=1$. Let $\mathcal{L}_j:=\{l\in[L]:j\in G_l\},\forall j\in[n]$. Based on (\ref{lwprac}), we deduce that $x^*$ has $\nu$-lower bound property. Then, we have that for any $j\in\mathcal{A}^c(x^*)$, $\partial_{x_j}\theta_{J^{x^*}_l}(\|x^*_{(l)}\|_1)=\{\sgn(x^*_j)/\nu\}=\partial_{x_j}\|x^*_{(l)}\|_1/\nu,\forall l\in\mathcal{L}_j$, which implies $\big[\lambda_2\sum_{l=1}^{L}w_l\partial\theta_{J^{x^*}_l}(\|x^*_{(l)}\|_1)\big]_{\mathcal{A}^c(x^*)}=\big[\frac{\lambda_2}{\nu}\sum_{l=1}^{L}w_l\partial\|x^*_{(l)}\|_1\big]_{\mathcal{A}^c(x^*)}$. Then, by (\ref{alg1alp}), we have that there exists an $\eta^*\in\partial{f}(x^*)$ such that
\begin{equation}\label{incl-stat-pf1}
\Big[\lambda_1\sum_{i=1}^n\nabla\theta_{I^{x^*}_j}(x^*_j)\Big]_{\mathcal{A}^c(x^*)}\in\Big[\eta^*+\frac{\lambda_1}{\nu}\partial\|x^*\|_1+N_{\Omega}(x^*)\Big]_{\mathcal{A}^c(x^*)}.
\end{equation}
By Assumption \ref{as-h} and $\bm{0}\in N_{\Omega}(x^*)$, we have $\bm{0}\in\big[\eta^*+\frac{\lambda_1}{\nu}\partial\|x^*\|_1+N_{\Omega}(x^*)\big]_{\mathcal{A}(x^*)}$. Combining with $\big[\lambda_1\sum_{j=1}^{n}\nabla\theta_{I^{x^*}_j}(x^*_j)\big]_{\mathcal{A}(x^*)}=\bm{0}$, we obtain $\big[\lambda_1\sum_{i=1}^n\nabla\theta_{I^{x^*}_j}(x^*_j)\big]_{\mathcal{A}(x^*)}\in\big[\eta^*+\frac{\lambda_1}{\nu}\partial\|x^*\|_1+N_{\Omega}(x^*)\big]_{\mathcal{A}(x^*)}$. It then follows from (\ref{incl-stat-pf1}), $\partial\|x^*\|_1=[\partial\|x^*\|_1]_{\mathcal{A}(x^*)}\times[\partial\|x^*\|_1]_{\mathcal{A}^c(x^*)}$, $N_{\Omega}(x^*)=[N_{\Omega}(x^*)]_{\mathcal{A}(x^*)}\times[N_{\Omega}(x^*)]_{\mathcal{A}^c(x^*)}$ and $\eta^*\in\partial{f}(x^*)$ that
\begin{equation}\label{reltac}
\lambda_1\sum_{i=1}^n\nabla\theta_{I^{x^*}_j}(x^*_j)\in\partial f(x^*)+\frac{\lambda_1}{\nu}\partial\|x^*\|_1+N_{\Omega}(x^*).
\end{equation}
Considering the $\nu$-lower bound property of $x^*$, we have $\lambda_2\sum_{l=1}^{L}w_l\partial\theta_{J^x_l}(\|x^*_{(l)}\|_1)\subseteq\frac{\lambda_2}{\nu}\sum_{l=1}^{L}w_l\partial\|x^*_{(l)}\|_1$. Adding them to both sides of (\ref{reltac}), respectively, we obtain $\lambda_1\sum_{j=1}^{n}\nabla\theta_{I^{x^*}_j}(x^*_j)+\lambda_2\sum_{l=1}^{L}w_l\partial\theta_{J^{x^*}_l}(\|x^*_{(l)}\|_1)\subseteq\partial\bar{f}(x^*)+N_{\Omega}(x^*)$, which means that $x^*$ satisfies (\ref{swdstat}) with $p=1$.
	
In conclusion, $x^*$ is an sw-d-stationary point of problem (\ref{g0ps}) and also a $\nu$-strong local minimizer of problem (\ref{g0p}) by Proposition \ref{local-stat}.
\end{proof}

\subsection{A DC algorithm with extrapolation}
In this part, we design a DC algorithm with extrapolation for solving (\ref{g0ps}), which is presented in Algorithm \ref{g0palg2}.

\begin{algorithm}
\caption{DC algorithm with extrapolation} \label{g0palg2}
{\bf Initialization:}
Choose $x^0\in\Omega$ and $\beta\in[0,1)$. Set $\{\mu_k\}$ be defined as in (\ref{mukexp}), $x^{-1}=x^0$ and $k=0$.
	
{\bf Step 1:} Choose $\beta_k\in[0,\beta]$ arbitrarily. Set $y^k=x^k+\beta_k(x^k-x^{k-1})$.
	
{\bf Step 2:} Take $\xi^k\in\partial_x\Theta_{I^k,J^k}(x^k;\mu_k)$ and compute $x^{k+1}$ by
\begin{equation}\label{alg21}
{x}^{k+1}=\underset{x\in\Omega}{\rm{argmin}}\Big\{w_{I^k,J^k,\xi^k}(x;\mu_k,y^k,x^k)+\frac{L_s}{2}\|x-y^k\|^2\Big\}.
\end{equation}
	
{\bf Step 3:} Update $k\leftarrow k+1$ and return to Step 1.
\end{algorithm}

The subproblem (\ref{alg21}) in Algorithm \ref{g0palg2} is equivalent to the following form
\begin{equation*}
x^{k+1}={\rm{prox}}_{\frac{1}{L_s}\big(\bar{f}_n(\cdot;\mu_k)+\delta_\Omega(\cdot)\big)}\Big(y^k-\frac{1}{L_s}\big(\nabla f_s(y^k)-\xi^k\big)\Big).
\end{equation*}Its calculation is given in Remark \ref{alg1-sp} with $\zeta^k=y^k-\frac{1}{L_s}\big(\nabla f_s(y^k)-\xi^k\big)$.

Firstly, we give some basic properties of Algorithm \ref{g0palg2}.
\begin{proposition}\label{alg2prop}
Let $\{x^k\}$ be generated by Algorithm \ref{g0palg2}. Then we have that $\{x^k\}$ is bounded, $\lim_{k\rightarrow\infty}\|x^k-x^{k-1}\|=0$ and $\lim_{k\rightarrow\infty}F(x^k)$ exists.
\end{proposition}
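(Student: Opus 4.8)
The plan is to follow the same template used for Algorithm \ref{g0palg1} in Proposition \ref{alg1prop}, but with the sufficient-decrease inequality replaced by a ``Lyapunov'' (potential function) argument appropriate to an extrapolation scheme. First I would establish a basic descent estimate for one step of \eqref{alg21}. Using the first-order optimality of the strongly convex subproblem (modulus $L_s$), evaluating the objective at $x^{k+1}$ versus $x^k$, together with the $L_s$-Lipschitz continuity of $\nabla f_s$ (so that $f_s(x^{k+1})\le f_s(y^k)+\langle\nabla f_s(y^k),x^{k+1}-y^k\rangle+\frac{L_s}{2}\|x^{k+1}-y^k\|^2$) and the convexity of $\Theta_{I^k,J^k}(\cdot;\mu_k)$ (so that $-\Theta_{I^k,J^k}(x^{k+1};\mu_k)\le-\Theta_{I^k,J^k}(x^k;\mu_k)-\langle\xi^k,x^{k+1}-x^k\rangle$), I would derive an inequality of the form
\begin{equation*}
F(x^{k+1};\mu_k)\le F(x^k;\mu_k)-\frac{L_s}{2}\|x^{k+1}-x^k\|^2+\frac{L_s\beta_k^2}{2}\|x^k-x^{k-1}\|^2,
\end{equation*}
arising by completing the square in the terms involving $y^k=x^k+\beta_k(x^k-x^{k-1})$. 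Defining the potential $H_k:=F(x^k;\mu_k)+\frac{L_s\beta^2}{2}\|x^k-x^{k-1}\|^2$ (or the version with $\mu_k$ replaced by $\nu$ for $k\ge K$), this yields $H_{k+1}\le H_k-\frac{L_s(1-\beta^2)}{2}\|x^{k+1}-x^k\|^2$ for $k\ge K$, since $\mu_k\equiv\nu$ and $F(\cdot;\nu)=F(\cdot)$ there and $\beta_k\le\beta<1$.

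Next I would invoke level-boundedness of $f$ from Assumption \ref{as-h}(i) together with the explicit form of the capped-$\ell_1$ relaxation (the subtracted terms are bounded, so $F(x;\nu)$ is also level-bounded on $\Omega$) to conclude that the sublevel set $\{x:F(x;\nu)\le H_K\}$ is bounded; monotonicity of $H_k$ for $k\ge K$ then forces $\{x^k\}$ to remain in this bounded set, giving boundedness of $\{x^k\}$. Consequently $\{H_k\}_{k\ge K}$ is bounded below, and being nonincreasing it converges to some $\eta\in\mathbb{R}$. Telescoping the decrease inequality gives $\sum_{k\ge K}\|x^{k+1}-x^k\|^2<\infty$, hence $\lim_{k\to\infty}\|x^k-x^{k-1}\|=0$. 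Finally, since $\frac{L_s\beta^2}{2}\|x^k-x^{k-1}\|^2\to0$, we get $\lim_{k\to\infty}F(x^k)=\lim_{k\to\infty}H_k=\eta$, which exists.

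The main obstacle I anticipate is pinning down the descent estimate with the correct constants so that the coefficient $1-\beta^2$ (or whatever emerges) is strictly positive for all admissible $\beta_k\in[0,\beta]$ with $\beta<1$; this requires carefully tracking the cross terms $\langle\nabla f_s(y^k)-\xi^k+L_s(x^{k+1}-y^k),\,\cdot\,\rangle$ produced by the optimality condition and handling the fact that $y^k$, not $x^k$, is the point at which $f_s$ is linearized, so the ``$-\frac{L_s}{2}\|x^{k+1}-x^k\|^2$'' term must be recovered from $-\frac{L_s}{2}\|x^{k+1}-y^k\|^2$ via the identity $\|x^{k+1}-y^k\|^2=\|x^{k+1}-x^k\|^2-2\beta_k\langle x^{k+1}-x^k,x^k-x^{k-1}\rangle+\beta_k^2\|x^k-x^{k-1}\|^2$ and a Cauchy--Schwarz/Young bound on the cross term. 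A secondary technical point is the transient regime $k<K$ where $\mu_k$ still varies: there one only needs that $F(x^k;\mu_k)$ stays bounded (finitely many such steps, each a well-defined minimization over the compact feasible region after the first bounded iterates are produced), so the asymptotic analysis is unaffected; I would note this explicitly and then restrict attention to $k\ge K$.
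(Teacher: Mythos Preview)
Your proposal is correct and follows essentially the same approach as the paper's proof: derive a one-step descent inequality from strong convexity of the subproblem, the $L_s$-Lipschitz continuity of $\nabla f_s$, and the convexity of $\Theta_{I^k,J^k}$, then package it into a nonincreasing potential (the paper uses $F(x^k)+\tfrac{L_s}{2}\|x^k-x^{k-1}\|^2$ for $k\ge K$) and conclude via level-boundedness and telescoping. Your anticipated ``main obstacle'' does not actually arise: comparing the subproblem objective at $x^k$ versus its minimizer $x^{k+1}$ via strong convexity yields $+\tfrac{L_s}{2}\|x^{k+1}-x^k\|^2-\tfrac{L_s}{2}\|x^k-y^k\|^2$ directly (the $\tfrac{L_s}{2}\|x^{k+1}-y^k\|^2$ from the descent lemma cancels the quadratic penalty), and since $\|x^k-y^k\|^2=\beta_k^2\|x^k-x^{k-1}\|^2$ no Cauchy--Schwarz/Young step is needed.
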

\begin{proof}
Firstly, we have that for any $k\geq K$,
\begin{equation*}
\begin{split}
&F(x^k)=\bar{f}_n(x^k;\nu)+f_s(x^k)-\Theta_{I^k,J^k}(x^k)\\\geq&\bar{f}_n(x^k;\nu)+f_s(y^k)+\langle\nabla f_s(y^k),x^k-y^k\rangle-\Theta_{I^k,J^k}(x^k)-\langle\xi^k,x^k-x^k\rangle\\=&{w}_{I^k,J^k,\xi^k}(x^k;\nu,y^k,x^k)\\\geq& w_{I^k,J^k,\xi^k}(x^{k+1};\nu,y^k,x^k)+\frac{L_s}{2}\|x^{k+1}-y^k\|^2+\frac{L_s}{2}\|x^k-x^{k+1}\|^2-\frac{L_s}{2}\|x^k-y^k\|^2\\=&\bar{f}_n(x^{k+1};\nu)+f_s(y^k)+\langle\nabla f_s(y^k),x^{k+1}-y^k\rangle+\frac{L_s}{2}\|x^{k+1}-y^k\|^2-\Theta_{I^k,J^k}(x^k)\\&-\langle\xi^k,x^{k+1}-x^k\rangle+\frac{L_s}{2}\|x^k-x^{k+1}\|^2-\frac{L_s}{2}\|x^k-y^k\|^2\\\geq& \bar{f}_n(x^{k+1};\nu)+f_s(x^{k+1})-\Theta_{I^k,J^k}(x^{k+1})+\frac{L_s}{2}\|x^k-x^{k+1}\|^2-\frac{L_s}{2}\|x^k-y^k\|^2\\\geq& F(x^{k+1})+\frac{L_s}{2}\|x^k-x^{k+1}\|^2-\frac{L_s}{2}\|x^k-y^k\|^2,
\end{split}
\end{equation*}
where the first equality and last inequality are due to the definition of $\Theta_{I^k,J^k}$, the first inequality follows from the convexity of $f_s$, the second and third equalities follow from the definition of ${w}_{I,J,\xi}$, the second inequality holds because the objective function in (\ref{alg21}) is strongly convex with modulus $L_s$ and $x^{k+1}$ is a minimizer of (\ref{alg21}), and the third inequality follows from the Lipschitz continuity of $\nabla f_s$ and convexity of $\Theta_{I^k,J^k}$. Then by $y^k=x^k+\beta_k(x^k-x^{k-1})$ and $\beta\in[0,1)$, we obtain that
\begin{equation*}
F(x^{k+1})+\frac{L_s}{2}\|x^{k+1}-x^{k}\|^2\leq F(x^k)+\frac{L_s}{2}\|x^k-y^k\|^2\leq F(x^k)+\frac{L_s\beta^2}{2}\|x^{k}-x^{k-1}\|^2,
\end{equation*}
and hence
\begin{equation}\label{al2ieq2}
0\leq\frac{L_s(1-\beta^2)}{2}\|x^k-x^{k-1}\|^2\leq F(x^k)+\frac{L_s}{2}\|x^k-x^{k-1}\|^2-\big(F(x^{k+1})+\frac{L_s}{2}\|x^{k+1}-x^{k}\|^2\big).
\end{equation}
Therefore $F(x^{k})+\frac{L_s}{2}\|x^{k}-x^{k-1}\|^2$ is decreasing with respect to $k$ when $k\geq K$ and hence $F(x^{k})\leq F(x^{k})+\frac{L_s}{2}\|x^{k}-x^{k-1}\|^2\leq F(x^{K})+\frac{L_s}{2}\|x^{K}-x^{K-1}\|^2$ when $k\geq K$. Further, by Assumption \ref{as-h} (i) that $f$ is level-bounded, and the form of $\theta$, we have that $\{x^k\}$ is bounded, which implies that $F(x^k)+\frac{L_s}{2}\|x^{k}-x^{k-1}\|^2$ is bounded from below. Then, there exists an $\eta\in\mathbb{R}$ such that $\lim_{k\rightarrow\infty}F(x^k)+\frac{L_s}{2}\|x^k-x^{k-1}\|^2=\eta$. Further, considering (\ref{al2ieq2}), we deduce that $\lim_{k\rightarrow\infty}\|x^k-x^{k-1}\|=0$ and hence $\lim_{k\rightarrow\infty}F(x^k)=\eta$.
\end{proof}

Similar to the convergence analysis of Algorithm \ref{g0palg1}, we show that all accumulation points of $\{x^k\}$ generated by Algorithm \ref{g0palg2} have a common support set and their zero entries can be converged within finite iterations. Moreover, any accumulation point of $\{x^k\}$ is a $\nu$-strong local minimizer of (\ref{g0p}).
\begin{theorem}\label{g0pthm2}
Let $\{x^k\}$ be generated by Algorithm \ref{g0palg2}. Then, the statements (i) and (ii) in Theorem 3.5 hold.
\end{theorem}
\begin{proof}
(i) Based on $\lim_{k\rightarrow\infty}\|x^k-x^{k-1}\|=0$ in Proposition \ref{alg2prop} and $y^k=x^k+\beta_k(x^k-x^{k-1})$, we deduce that $\lim_{k\rightarrow\infty}\|y^k-x^{k}\|=0$ and hence $\lim_{k\rightarrow\infty}\|x^{k+1}-y^{k}\|=\lim_{k\rightarrow\infty}\|x^{k+1}-x^k+(x^k-y^{k})\|=0$.

Let $\bar{\nu}\in\big(\nu,\min\{\frac{\lambda_1}{L_f},\vartheta\}\big)$. By $\lim_{k\rightarrow\infty}\|x^{k}-x^{k-1}\|=0$ and $\lim_{k\rightarrow\infty}\|x^{k+1}-y^{k}\|=0$, we have that there exists a ${\tilde{k}}\geq K$ such that for any $k\geq\tilde{k}$,
\begin{small}
\begin{equation*}\label{al2impineq-}
\|x^{{k}+1}-x^{{k}}\|<\left\{\begin{split}
&\min\left\{\bar{\nu}-\nu,\nu\frac{\lambda_1-\nu L_f}{\lambda_2\sum_{l}^{L}w_{l}}\Big/ \Big(\frac{\lambda_1-\nu L_f}{\lambda_2\sum_{l}^{L}w_{l}}+4\Big)\right\},&&\lambda_2>0,\sum_{l}^{L}w_{l}>0,\\
&\bar{\nu}-\nu,&&\lambda_2=0~\mbox{or}\sum_{l}^{L}w_{l}=0,
\end{split}
\right.
\end{equation*}
\end{small}
and $\|x^{k+1}-y^{k}\|<\frac{1}{2L_s}\left(\frac{\lambda_1}{\nu}-L_f\right),\forall k\geq\tilde{k}$.
	
Suppose there exist $\hat{k}\geq\tilde{k}$ and $i\in [n]$ such that $|x^{\hat{k}}_{i}|\in[0,\nu)$. Similar to the proof idea in Theorem \ref{gspthm1} (i), we have $x^k_{i}=0,\forall k>{\hat{k}}$. Therefore, there exists a $\bar{k}>0$ such that for any $k\geq\bar{k}$, $|x^k_{j}|$ is always $0$ or not less than $\nu$, $j\in[n]$, which implies that $I^k$ and $J^k$ only change finite times. Then, for any accumulation point $x^*$ and $\hat{x}$ of $\{x^k\}$, we have that $\mathcal{A}(x^*)=\mathcal{A}(\hat{x})$ and $x^k_{\mathcal{A}(x^*)}$ converges to $\bm{0}$ in finite iterations.
	
(ii) In view of the boundedness of $\{x^k\}$, there exists at least an accumulation point in $\{x^k\}$. Let $x^*$ be an accumulation point of $\{x^k\}$. Then there exists a subsequence $\{x^{k_i}\}\subseteq\{x^k\}$ satisfying $\lim_{k_i\rightarrow\infty}x^{k_i}=x^*$. Owning to $\lim_{k\rightarrow\infty}\|x^{k}-x^{k-1}\|=0$ and $y^k=x^k+\beta_k(x^k-x^{k-1})$ again, we have $\lim_{k_i\rightarrow\infty}x^{k_i+1}=\lim_{k_i\rightarrow\infty}y^{k_i}=x^*$. 
	
Similar to the proof idea in Theorem \ref{gspthm1} (ii), we have that $x^*$ is an sw-d-stationary point of problem (\ref{g0ps}) and also a $\nu$-strong local minimizer of problem (\ref{g0p}).
\end{proof}
\begin{remark}
By Theorem \ref{gspthm1} (i) and Theorem \ref{g0pthm2} (i), we have that all accumulation points of the iterates generated by the proposed algorithms satisfy that the absolute values of their nonzero entries have a unified lower bound. This property has many advantages in sparse optimization. For example, it can distinguish zero and nonzero entries of coefficients effectively in sparse high-dimensional regression \cite{Chartrand2008, Huang2008}, and  can also produce closed contours and neat edges for restored images \cite{Chen2012}.
\end{remark}
\section{Global convergence analysis of the algorithms}\label{section4}
In this section, we firstly transform the subproblems of Algorithm \ref{g0palg1} and Algorithm \ref{g0palg2} into an equivalent low-dimensional strong convex problem after  finite iterations. Then, under the assumption of KL property, we prove the global convergence and convergence rate of Algorithm \ref{g0palg1} with $N=0$ and Algorithm \ref{g0palg2} for solving problem (\ref{g0p}). It is worth noting that the proposed KL assumption is naturally satisfied for some common loss functions in sparse regression.

Firstly, we show that the subproblems in Algorithm \ref{g0palg1} and Algorithm \ref{g0palg2} are equivalent to a strongly convex problem in a low dimensional space after finite iterations.

\begin{proposition}\label{tpequt}
Denote $x^*$ as an accumulation point of $\{x^k\}$ in Algorithm \ref{g0palg1} or Algorithm \ref{g0palg2} and $\check{\Omega}:=\{x\in\Omega:x_j=0,\forall~j\in\mathcal{A}(x^*)\}$. There exists a $\check{K}>0$ such that for any $k\geq\check{K}$, $\mathcal{A}(x^k)=\mathcal{A}(x^*)$, $I^{k}=I^{x^*}$, $J^{k}=J^{x^*}$, the subproblem (\ref{alg12}) in Algorithm \ref{g0palg1} with $\alpha_k=\bar{\alpha}_k$ is equivalent to
\begin{equation}\label{alg01}
x^{k+1}=\underset{x\in\check\Omega}{\rm{argmin}}\{{w}_{I^{x^*},J^{x^*},\xi^k}(x;\nu,x^k,x^k)+\frac{\bar\alpha_k}{2}\|x-x^k\|^2\},
\end{equation}
and the subproblem (\ref{alg21}) in Algorithm \ref{g0palg2} is equivalent to 
\begin{equation}\label{alg02}
{x}^{k+1}=\underset{x\in\check\Omega}{\rm{argmin}}\{{w}_{I^{x^*},J^{x^*},\xi^k}(x;\nu,y^k,x^k)+\frac{L_s}{2}\|x-y^k\|^2\}.
\end{equation}
\end{proposition}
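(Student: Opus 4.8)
The plan is to split the proof into a combinatorial stabilization step and a convex-analytic reduction step. First I would pin down the eventual support and the eventual index vectors of the iterates. Fix the accumulation point $x^*$. By Theorem~\ref{gspthm1}(i) for Algorithm~\ref{g0palg1} (resp.\ Theorem~\ref{g0pthm2}(i) for Algorithm~\ref{g0palg2}), $I^k$ and $J^k$ change only finitely many times, so, exactly as extracted inside the proofs of part~(ii) of those theorems, there is $k_1\ge K$ with $I^k=I^{x^*}$ and $J^k=J^{x^*}$ for all $k\ge k_1$; moreover $x^k_{\mathcal{A}(x^*)}$ equals $\bm{0}$ after finitely many iterations, giving $k_2$ with $\mathcal{A}(x^k)\supseteq\mathcal{A}(x^*)$ for $k\ge k_2$. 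For the reverse inclusion I would invoke the ``gap'' obtained in the proof of Theorem~\ref{gspthm1}(i)/\ref{g0pthm2}(i): there is $k_3$ such that, for $k\ge k_3$, each $|x^k_j|$ is either $0$ or at least $\nu$. Since $\theta_1(t)=0>\theta_2(t),\theta_3(t)$ holds precisely when $|t|<\nu$, the defining rule of $I^x$ yields $I^k_j=1\iff|x^k_j|<\nu\iff x^k_j=0$ for $k\ge k_3$; combining this with $I^k=I^{x^*}$ and with the $\nu$-lower bound property of $x^*$ (Proposition~\ref{gslbdp}, applicable since $x^*$ is an sw-d-stationary point by Theorem~\ref{gspthm1}(ii) or Theorem~\ref{g0pthm2}(ii)) gives $x^k_j=0\iff I^{x^*}_j=1\iff x^*_j=0$, i.e.\ $\mathcal{A}(x^k)=\mathcal{A}(x^*)$ for every $k\ge\check K:=\max\{k_1,k_2,k_3\}$. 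This already establishes the first three claims.

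Next I would reduce the subproblems. Fix $k\ge\check K$. Since $k\ge K$, definition~(\ref{mukexp}) gives $\mu_k=\nu$, hence $\bar f_n(\cdot;\mu_k)=\bar f_n(\cdot;\nu)$, $\Theta_{I^k,J^k}(x^k;\mu_k)=\Theta_{I^{x^*},J^{x^*}}(x^k)$, and $\xi^k\in\partial_x\Theta_{I^k,J^k}(x^k;\mu_k)=\partial\Theta_{I^{x^*},J^{x^*}}(x^k)$; therefore the objective of~(\ref{alg12}) with $\alpha_k=\bar\alpha_k$ is literally $w_{I^{x^*},J^{x^*},\xi^k}(x;\nu,x^k,x^k)+\frac{\bar\alpha_k}{2}\|x-x^k\|^2$, which is strongly convex in $x$ with modulus $\bar\alpha_k\ge\underline{\alpha}>0$ (using $\underline{\alpha}\le\bar\alpha_k\le\max\{\bar\alpha,\rho L_s\}$ from Proposition~\ref{innerloops}), so it has a unique minimizer over $\Omega$, namely $x^{k+1}$. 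By the stabilization step applied at index $k+1$ we have $x^{k+1}_j=0$ for $j\in\mathcal{A}(x^*)$, i.e.\ $x^{k+1}\in\check\Omega\subseteq\Omega$; and a point that minimizes a strongly convex function over $\Omega$ and lies in the convex subset $\check\Omega$ is automatically its unique minimizer over $\check\Omega$. Hence $x^{k+1}$ solves~(\ref{alg01}), which is the asserted equivalence. The argument for Algorithm~\ref{g0palg2} is identical, with $\bar\alpha_k$ replaced by the fixed modulus $L_s>0$ and $x^k$ replaced by $y^k$: the objective of~(\ref{alg21}) is $L_s$-strongly convex, its unique minimizer over $\Omega$ is $x^{k+1}\in\check\Omega$, so it coincides with the solution of~(\ref{alg02}).

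The main obstacle is the stabilization step, and within it the reverse inclusion $\mathcal{A}(x^k)\subseteq\mathcal{A}(x^*)$: it is not enough to know that $I^k$, $J^k$ and the zeros on $\mathcal{A}(x^*)$ have settled; one must also rule out that some coordinate outside $\mathcal{A}(x^*)$ drifts to zero, and this requires the ``no value in $(0,\nu)$'' dichotomy for large $k$ together with the $\nu$-lower bound property of $x^*$ itself. Once the support and the fixed index vectors are frozen, the rest is the routine observation that restricting a strongly convex program to a subset that still contains its global minimizer leaves the solution unchanged, so no further work is needed beyond bookkeeping the index $\check K$.
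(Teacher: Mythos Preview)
Your proposal is correct and follows essentially the same approach as the paper: invoke Theorems~\ref{gspthm1}/\ref{g0pthm2} to freeze the support and the index vectors, then observe that the unique minimizer $x^{k+1}$ over $\Omega$ already lies in the convex subset $\check\Omega$, so it is also the unique minimizer over $\check\Omega$. Your stabilization step is in fact more careful than the paper's, which simply asserts $\mathcal{A}(x^k)=\mathcal{A}(x^*)$ ``by Theorem~\ref{gspthm1}(i)''; you correctly spell out the reverse inclusion via the $0$-or-$\ge\nu$ dichotomy and the equivalence $I^k_j=1\iff x^k_j=0$, which is exactly the content hidden inside the proof of that theorem.
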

\begin{proof}
Since the subproblems (\ref{alg12}) and (\ref{alg21}), and problems (\ref{alg01}) and (\ref{alg02}) are all strongly convex, they have a unique minimizer, respectively. By Theorem \ref{gspthm1} (i), we have that there exists a $\check{K}>K$ such that for any $k\geq\check{K}$, $\mathcal{A}(x^k)=\mathcal{A}(x^*)$, $I^{k}=I^{x^*}$ and $J^{k}=J^{x^*}$. Hence, the subproblem (\ref{alg12}) with $\alpha_k=\bar{\alpha}_k$ becomes $x^{k+1}=\underset{x\in\Omega}{\rm{argmin}}\{{w}_{I^{x^*},J^{x^*},\xi^k}(x;\nu,x^k,x^k)+\frac{\bar\alpha_k}{2}\|x-x^k\|^2\}$. Further, by $x^{k+1}\in\check{\Omega}$ and the uniqueness of minimizer to (\ref{alg12}) and (\ref{alg01}), we deduce that (\ref{alg12}) with $\alpha_k=\bar{\alpha}_k$ is equivalent to  (\ref{alg01}) for any $k\geq\check{K}$. Similarly, we have that (\ref{alg21}) is equivalent to  (\ref{alg02}) for any $k\geq\check{K}$.
\end{proof}

Recall the definition of Kurdyka-\L ojasiewicz (KL) property.
\begin{definition}\label{def1}\cite{Attouch2013}
We say that a proper closed function $h:\mathbb{R}^n\rightarrow(-\infty,\infty]$ has KL property at an $\hat{x}\in{\rm{dom}}\partial h$, if there are $a\in(0,\infty]$, a continuous concave function $\varphi:[0,a)\rightarrow[0,\infty)$ and a neighborhood $V$ of $\hat{x}$ such that
\begin{itemize}
\item[(i)] $\varphi$ is continuously differentiable on $(0,a)$, $\varphi(0)=0$, $\varphi'>0$ on $(0,a)$;
\item[(ii)] for any $x\in V$ with $h(\hat{x})<h(x)<h(\hat{x})+a$, the following inequality holds \begin{equation}\label{varphi}
\varphi'(h(x)-h(\hat{x})){\rm{dist}}(\bm{0},\partial h(x))\geq 1.
\end{equation}
\end{itemize}
\end{definition}
If $h$ has KL property at $\hat{x}\in{\rm{dom}}\partial h$ with $\varphi$ in (\ref{varphi}) chosen as $\varphi(t)=a_0t^{1-\alpha}$ for some $a_0>0$ and $\alpha\in[0,1)$, we say that $h$ has the KL property at $\hat{x}$ with exponent $\alpha$. If $h$ has KL property at each point of $\mbox{dom}\partial{h}$ (with exponent $\alpha$), we say that $h$ is a KL function (with exponent $\alpha$). 

Next, we give the global convergence analysis of Algorithm \ref{g0palg1} with $N=0$ and Algorithm \ref{g0palg2}. Define $H(x,y):=f(x)+\frac{L_s}{2}\|x-y\|^2+\delta_{\check{\Omega}}(x)$.
\begin{theorem}\label{convergence-Alg1}
Let $\{x^k\}$ be the iterates generated by Algorithm \ref{g0palg1} with $N=0$ (or Algorithm \ref{g0palg2}). If $f+\delta_{\check\Omega}$ (or $H$) is a KL function, then $x^k$ converges to a $\nu$-strong local minimizer $x^*$ of problem (\ref{g0p}) and $\{x^k\}$ has finite length, i.e. $\sum_{k=0}^{\infty}\|x^{k+1}-x^k\|<\infty$. Further, if $f+\delta_{\check{\Omega}}$ has KL property at $x^*$ (or $H$ has KL property at $(x^*,x^*)$) with exponent $\alpha\in[0,1)$, then we have the following convergence rate results.
\begin{itemize}
\item[(i)] If $\alpha=0$, then $\{x^k\}$ converges in finite number of iterations;
\item[(ii)] If $\alpha\in(0,\frac{1}{2}]$, then $\{x^k\}$ is R-linearly convergent to $x^*$, i.e. there exist $c>0$ and $q\in[0,1)$ such that $\|x^k-x^*\|\leq cq^k,\forall k\geq0$;
\item[(iii)] If $\alpha\in(\frac{1}{2},1)$, then $\{x^k\}$ is R-sublinearly convergent to $x^*$, i.e. there exists $c>0$ such that $\|x^k-x^*\|\leq ck^{-\frac{1-\alpha}{2\alpha-1}},\forall k\geq0$.
\end{itemize}
\end{theorem}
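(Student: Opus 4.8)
The plan is to leverage Proposition~\ref{tpequt}, which after finitely many iterations reduces the iteration to a proximal-type update on the low-dimensional set $\check\Omega$ where the nonsmooth $\ell_0$-related terms have disappeared, so that the iterates effectively follow a descent scheme on the convex function $f+\delta_{\check\Omega}$ (resp. an inertial scheme governed by $H$). Concretely, I would first fix a large iteration index $\check K$ from Proposition~\ref{tpequt} so that for $k\ge\check K$ the subproblems are \eqref{alg01} (resp. \eqref{alg02}), with $I^k\equiv I^{x^*}$, $J^k\equiv J^{x^*}$, and $x^k\in\check\Omega$. Since on $\check\Omega$ the linear terms $-\Theta_{I^{x^*},J^{x^*}}(\cdot)$ and the $\ell_1/\ell_p$ parts of $\bar f_n$ combine with $f_n$ into a fixed convex function plus a fixed linear term, the update \eqref{alg01} becomes a genuine proximal-gradient step for minimizing $f+\delta_{\check\Omega}$, and \eqref{alg02} becomes the corresponding inertial proximal-gradient step. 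I would then verify the standard three ingredients of the abstract convergence framework of Attouch--Bolte--Svaiter type arguments: (a) a sufficient decrease inequality of the form $\Phi(x^{k+1})+\tfrac{c}{2}\|x^{k+1}-x^k\|^2\le\Phi(x^k)$ for an appropriate Lyapunov function $\Phi$ ($\Phi=f+\delta_{\check\Omega}$ for Algorithm~\ref{g0palg1} with $N=0$, obtained from \eqref{alg11}; $\Phi=H$ for Algorithm~\ref{g0palg2}, obtained from \eqref{al2ieq2}); (b) a relative-error bound $\mathrm{dist}(\bm 0,\partial\Phi(x^{k+1}))\le b\,\|x^{k+1}-x^k\|$ (resp. $\le b(\|x^{k+1}-x^k\|+\|x^k-x^{k-1}\|)$), which follows from the first-order optimality condition of \eqref{alg01} (resp. \eqref{alg02}), the Lipschitz continuity of $\nabla f_s$, and the boundedness of $\bar\alpha_k$ from Proposition~\ref{innerloops}; and (c) a continuity/closedness condition linking $\Phi$-values along the sequence, which holds because $f$ is continuous and $\Phi$ is lower semicontinuous.

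With these three properties and the KL hypothesis on $\Phi$ at the accumulation point $x^*$ (resp. at $(x^*,x^*)$, noting $H(x^*,x^*)=f(x^*)+\delta_{\check\Omega}(x^*)$ and that an accumulation point of $\{x^k\}$ gives an accumulation point $(x^*,x^*)$ of $\{(x^k,x^{k-1})\}$ since $\|x^k-x^{k-1}\|\to0$), the standard KL convergence machinery yields $\sum_{k}\|x^{k+1}-x^k\|<\infty$, hence $\{x^k\}$ is Cauchy and converges to a single limit, which must be the accumulation point $x^*$; by Theorem~\ref{gspthm1}(ii) (resp. Theorem~\ref{g0pthm2}(ii)) this $x^*$ is a $\nu$-strong local minimizer of \eqref{g0p}. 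For the rate, I would plug $\varphi(t)=a_0 t^{1-\alpha}$ into the KL inequality and run the usual recursion on the tail sums $S_k:=\sum_{j\ge k}\|x^{j+1}-x^j\|$: the case $\alpha=0$ forces the gradient mapping to vanish in finitely many steps (finite termination); the case $\alpha\in(0,\tfrac12]$ gives $S_k\le\gamma S_{k-1}$ for some $\gamma\in(0,1)$, hence $\|x^k-x^*\|\le S_k$ decays R-linearly; and the case $\alpha\in(\tfrac12,1)$ gives $S_k=O(k^{-(1-\alpha)/(2\alpha-1)})$, hence R-sublinear convergence. For Algorithm~\ref{g0palg2} one carries out the same recursion with the merit function $H$ and the displacement $\|(x^k,x^{k-1})-(x^{k-1},x^{k-2})\|$, then transfers the bound back to $\|x^k-x^*\|$ using $\|x^k-x^*\|\le\sum_{j\ge k}\|x^{j+1}-x^j\|$.

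The main obstacle I anticipate is establishing the relative-error (subgradient) bound cleanly for both algorithms in the reduced coordinates. For Algorithm~\ref{g0palg1} with $N=0$ one must check that $\bm 0\in\partial(f+\delta_{\check\Omega})(x^{k+1})$ up to an error proportional to $\|x^{k+1}-x^k\|$; the delicate point is that the optimality condition of \eqref{alg01} produces a subgradient of $\bar f_n(\cdot;\nu)$ plus $\nabla f_s(x^k)$ plus the fixed linear term $-\lambda_1\sum_j\nabla\theta_{I^{x^*}_j}(x^*_j)-\lambda_2\sum_l w_l\nabla\theta_{J^{x^*}_l}(\|x^*_{(l)}\|_p)$ plus $N_{\check\Omega}(x^{k+1})$ plus $\bar\alpha_k(x^{k+1}-x^k)$, and one must argue, as in the proof of Theorem~\ref{gspthm1}(ii) (for $p=1$ needing the componentwise argument on $\mathcal{A}(x^*)$), that on $\check\Omega$ the $\ell_1/\ell_p$ subgradient terms together with the fixed linear terms cancel so that what remains is exactly an element of $\partial f(x^{k+1})+N_{\check\Omega}(x^{k+1})$, with residual $\bar\alpha_k(x^k-x^{k+1})+(\nabla f_s(x^k)-\nabla f_s(x^{k+1}))$ bounded by $(\tilde\alpha+L_s)\|x^{k+1}-x^k\|$. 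For Algorithm~\ref{g0palg2} the analogous computation must additionally account for the extrapolation term $y^k-x^k$, giving the two-step error bound; here one also has to confirm that the descent inequality \eqref{al2ieq2} indeed certifies $H$ as a valid Lyapunov function with the displacement $\|x^k-x^{k-1}\|$ appearing both in $H$ and in the decrease, which is exactly the form needed by the KL framework. Once these bookkeeping steps are done, the remainder is the standard, essentially mechanical KL argument.
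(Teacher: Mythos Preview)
Your overall plan coincides with the paper's: after invoking Proposition~\ref{tpequt} you verify the three Attouch--Bolte--Svaiter ingredients for the Lyapunov function $f+\delta_{\check\Omega}$ (resp.\ $H$) and then quote the standard KL machinery. The sufficient-decrease and continuity parts are fine and match the paper exactly.

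There is, however, a concrete gap in your relative-error bound. You claim that on $\check\Omega$ ``the $\ell_1/\ell_p$ subgradient terms together with the fixed linear terms cancel'', leaving only the residual $\bar\alpha_k(x^k-x^{k+1})+(\nabla f_s(x^k)-\nabla f_s(x^{k+1}))$. Two points: (i) the linearized term in \eqref{alg01} is $\xi^k\in\partial\Theta_{I^{x^*},J^{x^*}}(x^k)$, not a gradient at $x^*$, so when $p=2$ it is \emph{not} fixed across iterations; (ii) for $p=2$ the cancellation on $\mathcal{A}^c(x^*)$ is not exact. The optimality condition of \eqref{alg01} produces, on $\mathcal{A}^c(x^*)$, the term
\[
\frac{\lambda_2}{\nu}\sum_{l:J^{x^*}_l=2} w_l\Big(\frac{x^{k+1}_{(l)}}{\|x^{k+1}_{(l)}\|}-\frac{x^{k}_{(l)}}{\|x^{k}_{(l)}\|}\Big),
\]
which does not vanish. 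The paper deals with this explicitly: using the same calculation as in \eqref{ineq} (together with $\sgn(x^{k+1}_j)=\sgn(x^k_j)$ and $|x^k_j|\ge\nu$ for large $k$), it shows a local Lipschitz estimate $\|\nabla_{x_{\mathcal{A}^c}}\Theta_{I^{x^*},J^{x^*}}(x^k)-\nabla_{x_{\mathcal{A}^c}}\Theta_{I^{x^*},J^{x^*}}(x^{k+1})\|\le\bar\gamma\|x^{k+1}-x^k\|$. Only then does one obtain $\|\zeta^{k+1}\|\le d_1\|x^{k+1}-x^k\|$ (resp.\ the two-step bound for $\hat\zeta^{k+1}$), with the constant depending on $\bar\gamma$ in addition to $\tilde\alpha$ and $L_s$. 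For $p=1$ the sign argument you mention on $\mathcal{A}^c$ does give exact cancellation, and on $\mathcal{A}(x^*)$ the normal cone $N_{\check\Omega}$ absorbs everything; but for $p=2$ this extra Lipschitz step is essential and is the missing ingredient in your outline. Once you add it, the rest of your argument goes through as written.
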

\begin{proof}
Let $x^*$ be an accumulation point of $\{x^k\}$ generated by Algorithm \ref{g0palg1} with $N=0$ or Algorithm \ref{g0palg2}. In view of $\lim_{k\rightarrow\infty}\|x^k-x^{k-1}\|=0$ in Proposition \ref{alg2prop}, we deduce that $(x^*,x^*)$ is an accumulation point of $\{(x^k,x^{k-1})\}$. Denote $\mathcal{A}:=\mathcal{A}(x^*)$, $\mathcal{A}^c:=\mathcal{A}^c(x^*)$. By Proposition \ref{tpequt}, we have that for any $k>\check{K}$, $I^k=I^{x^*}$, $J^k=J^{x^*}$, $\mathcal{A}(x^k)=\mathcal{A}$ and $x^k\in\check{\Omega}$. Further, considering the $\nu$-lower bound property of $x^*$, we deduce that for any $k>\check{K}$, $|x^k_j|\geq\nu,\forall j\in\mathcal{A}^c$. Then, there exists a constant $\sigma>0$ such that $F(x^k)=f(x^k)+\delta_{\check{\Omega}}(x^k)+\sigma,\forall k>\check{K}$.
Further, for Algorithm \ref{g0palg1}, by (\ref{a1thm2}) and $N=0$, we have
\begin{equation}\label{im1}
f(x^{k+1})+\delta_{\check{\Omega}}(x^{k+1})\leq f(x^{k})+\delta_{\check{\Omega}}(x^k)-\frac{c}{2}\|x^{k+1}-x^k\|^2,k>\check{K},
\end{equation}
and for Algorithm \ref{g0palg2}, by (\ref{al2ieq2}), we have
\begin{equation}\label{im12}
H(x^{k+1},x^k)\leq H(x^k,x^{k-1})-\frac{L_s(1-\beta^2)}{2}\|x^k-x^{k-1}\|^2,k>\check{K}.
\end{equation}
	
Moreover, since $x^*$ is an accumulation point of $\{x^k\}$, for Algorithm \ref{g0palg1}, by Proposition \ref{alg1prop}, we have
\begin{equation}\label{im2}
\lim_{k\rightarrow\infty}\big[f(x^k)+\delta_{\check{\Omega}}(x^k)\big]=f(x^*)+\delta_{\check{\Omega}}(x^*),
\end{equation}
and for Algorithm \ref{g0palg2}, since $(x^*,x^*)$ is an accumulation point of $\{(x^k,x^{k-1})\}$, by Proposition \ref{alg2prop}, we deduce
\begin{equation}\label{im22}
\lim_{k\rightarrow\infty}H(x^k,x^{k-1})=H(x^*,x^*).
\end{equation}
	
For any $x\in\check{\Omega}$, we deduce that\begin{equation*}
\begin{split}
{w}_{I^{x^*},J^{x^*},\xi^k}(x;\nu,x^k,x^k)+\frac{\bar\alpha_k}{2}\|x-x^k\|^2=&f_n(x)+\frac{\lambda_1}{\nu}\|x_{\mathcal{A}^c}\|_1+\frac{\lambda_2}{\nu}\sum_{l=1}^{L}w_l\|{x_{\mathcal{A}^c}}_{(l)}\|_p+f_s(x^k)+\langle\nabla f_s(x^k),x-x^k\rangle\\&-\Theta_{I^{x^*},J^{x^*}}(x^k;\mu)-\langle\nabla_{x_{\mathcal{A}^c}} \Theta_{I^{x^*},J^{x^*}}(x^k),x_{\mathcal{A}^c}-x^k_{\mathcal{A}^c}\rangle+\frac{\bar\alpha_k}{2}\|x_{\mathcal{A}^c}-x^k_{\mathcal{A}^c}\|^2,
\end{split}
\end{equation*}
where ${x_{\mathcal{A}^c}}_{(l)}$ is the restriction of $x_{\mathcal{A}^c}$ to the index set $G_l\cap\mathcal{A}^c$. Then, by the first-order optimal condition of (\ref{alg01}) and the definition of $\Theta_{I,J}$, for any $k>\check{K}$, we have $\bm{0}\in [\partial ({f}_n+\delta_{\check\Omega})(x^{{k}+1})+\nabla f_s(x^{k})]_{\mathcal{A}}$ and
\begin{equation*}
\bm{0}\in[\partial ({f}_n+\delta_{\check\Omega})(x^{{k}+1})+\nabla f_s(x^{k})]_{\mathcal{A}^c}+\nabla_{x_{\mathcal{A}^c}} \Theta_{I^{x^*},J^{x^*}}(x^{k+1})-\nabla_{x_{\mathcal{A}^c}} \Theta_{I^{x^*},J^{x^*}}(x^k)+\bar\alpha_{k}(x^{{k}+1}_{\mathcal{A}^c}-x^{k}_{\mathcal{A}^c}).
\end{equation*}
Then, for Algorithm \ref{g0palg1}, we deduce that for any $k>\check{K}$,
\begin{equation*}
[\partial (f+\delta_{\check\Omega}) (x^{k+1})]_{\mathcal{A}}=[\partial ({f}_n+\delta_{\check\Omega})(x^{{k}+1})]_{{\mathcal{A}}}+[\nabla f_s(x^{k+1})]_{{\mathcal{A}}}\ni-[\nabla f_s(x^{k})]_{{\mathcal{A}}}+[\nabla f_s(x^{k+1})]_{{\mathcal{A}}}:=\zeta_{\mathcal{A}}^{k+1}.
\end{equation*}
and
\begin{equation*}
\begin{split}
&[\partial (f+\delta_{\check\Omega}) (x^{k+1})]_{\mathcal{A}^c}=[\partial ({f}_n+\delta_{\check\Omega})(x^{{k}+1})]_{{\mathcal{A}^c}}+[\nabla f_s(x^{k+1})]_{{\mathcal{A}^c}}\\\ni& -[\nabla f_s(x^{k})]_{{\mathcal{A}^c}}+\nabla_{x_{{\mathcal{A}^c}}} \Theta_{I^{x^*},J^{x^*}}(x^k)-\nabla_{x_{{\mathcal{A}^c}}} \Theta_{I^{x^*},J^{x^*}}(x^{k+1})-\bar\alpha_{k}(x^{{k}+1}_{{\mathcal{A}^c}}-x^{k}_{{\mathcal{A}^c}})+[\nabla f_s(x^{k+1})]_{{\mathcal{A}^c}}:=\zeta_{\mathcal{A}^c}^{k+1}.
\end{split}
\end{equation*}
Then, ${\zeta}^{k+1}\in\partial (f+\delta_{\check\Omega}) (x^{k+1}),\forall k>\check{K}$.
	
Similarly, for Algorithm \ref{g0palg2}, by the first-order optimal condition of (\ref{alg02}), we deduce that for any $k>\check{K}$,
$[\partial (f+\delta_{\check\Omega}) (x^{k+1})]_{\mathcal{A}}\ni-[\nabla f_s(y^{k})]_{{\mathcal{A}}}+[\nabla f_s(x^{k+1})]_{{\mathcal{A}}}:=\zeta_{\mathcal{A}}^{k+1}$
and $[\partial (f+\delta_{\check\Omega}) (x^{k+1})]_{\mathcal{A}^c}\ni-[\nabla f_s(y^{k})]_{{\mathcal{A}^c}}+\nabla_{x_{{\mathcal{A}^c}}} \Theta_{I^{x^*},J^{x^*}}(x^k)-\nabla_{x_{{\mathcal{A}^c}}} \Theta_{I^{x^*},J^{x^*}}(x^{k+1})-L_s(x^{{k}+1}_{{\mathcal{A}^c}}-y^{k}_{{\mathcal{A}^c}})+[\nabla f_s(x^{k+1})]_{{\mathcal{A}^c}}:=\zeta_{\mathcal{A}^c}^{k+1}$. Let
\begin{equation*}
\hat{\zeta}^{k+1}:=({\zeta}^{k+1}+L_s(x^{k+1}-x^k),L_s(x^k-x^{k+1})),\forall k>\check{K},
\end{equation*}then $\hat{\zeta}^{k+1}\in\partial H(x^k,x^{k+1}),\forall k>\check{K}$.
	
When $p=1$, by $\lim_{k\rightarrow\infty}\|x^{k}-x^{k-1}\|=0$, there exists a $\tilde{k}>\check{K}$ such that for any $k\geq\tilde{k}$, $\sgn(x^{k+1}_j)=\sgn(x^k_j),j\in\mathcal{A}^c$, which implies that $\|\nabla_{x_{{\mathcal{A}^c}}} \Theta_{I^{x^*},J^{x^*}}(x^k)-\nabla_{x_{{\mathcal{A}^c}}} \Theta_{I^{x^*},J^{x^*}}(x^{k+1})\|=0,\forall k\geq\tilde{k}$. When $p=2$, since $|x^k_j|\geq\nu,j\in\mathcal{A}^c$ for any $k>\check{K}$ and $\nabla_{x_i}\|x_{(l)}\|$ is global Lipschitz continuous in $\{x:\|x_{(l)}\|\geq\nu\}$, we deduce that there exists a $\bar\gamma>0$ such that $\|\nabla_{x_{{\mathcal{A}^c}}} \Theta_{I^{x^*},J^{x^*}}(x^k)-\nabla_{x_{{\mathcal{A}^c}}} \Theta_{I^{x^*},J^{x^*}}(x^{k+1})\|\leq\bar\gamma\|x^{k+1}-x^k\|,\forall k>\check{K}$. Further, by the Lipschitz continuity of $\nabla f_s$, $y^k=x^k+\beta_k(x^k-x^{k-1})$ and the boundedness of $\bar{\alpha}_k$ and $\beta_k$, there exist $d_1,d_2>0$ such that \begin{equation}\label{im3}
\|\zeta^{k+1}\|\leq d_1\|x^{k+1}-x^k\|,\forall k\geq\tilde{k},
\end{equation}
\begin{equation}\label{im32}
\|\hat\zeta^{k+1}\|\leq d_2(\|x^{k+1}-x^k\|+\|x^k-x^{k-1}\|),\forall k\geq\tilde{k}+1.
\end{equation}
Moreover, by Theorem \ref{gspthm1}, $x^*$ is a $\nu$-strong local minimizer of problem (\ref{g0p}) and hence $x^*$ is a global minimizer of $f$ on $\check\Omega$. Then, by (\ref{im1}), (\ref{im2}) and (\ref{im3}), based on Theorem 2.9 in \cite{Attouch2013}, or by (\ref{im12}), (\ref{im22}) and (\ref{im32}), similar to the proof of Theorem 4.2 (iv) in \cite{Wen2018}, we deduce that $\{x^k\}$ converges to the $\nu$-strong local minimizer $x^*$ of problem (\ref{g0p}) and $\sum_{k=0}^{\infty}\|x^{k+1}-x^k\|<\infty$. Similar to the proof of Theorem 2 in \cite{Attouch2009} or Theorem 4.3 in \cite{Wen2018}, since $f+\delta_{\check{\Omega}}$ has KL property at $x^*$ or $H$ has KL property at $(x^*,x^*)$ with an exponent $\alpha\in[0,1)$, we have results (i) (ii) and (iii) for any $k\geq\tilde{k}+1$. Then, adjusting $c$ by the former $\tilde{k}$ terms, we deduce the convergence rate results (i) (ii) and (iii).
\end{proof}
\begin{remark}
For the general DC minimization $\min_{x\in\mathbb{R}^n}h(x)-g(x)$, the global convergence analysis of DC algorithms usually depends on the Lipschitz continuous differentiability of the subtracted function $g$ \cite{Tang2020,Wen2018}. If $g(x)=\max_i\psi_i(x)$, the global convergence is proved based on Lipschitz continuous differentiability of $\psi_i$ \cite{Lu2019MP}. In this paper, the subtracted function in (\ref{g0ps}) does not own the above properties. Moreover, the proposed KL assumption is only assumed on the loss function $f$, but not on the objective function $F$ in (\ref{g0ps}).
\end{remark}

Indeed, the KL assumptions in Theorem \ref{convergence-Alg1} are not restrictive. We will show that there exist some common loss functions to satisfy that $f+\delta_{\check{\Omega}}$ or $H$ has the KL property with exponent $\frac{1}{2}$.
\begin{proposition}\label{con-Alg1}
Let $\{x^k\}$ be the iterates generated by Algorithm \ref{g0palg1} with $N=0$ or Algorithm \ref{g0palg2}. If $f=l(Ax)$, where $l$ is strongly convex and twice continuously differentiable and $A\in\mathbb{R}^{m\times n}$, then $\{x^k\}$ is R-linearly convergent to a $\nu$-strong local minimizer of problem (\ref{g0p}) and $\{x^k\}$ has finite length.
\end{proposition}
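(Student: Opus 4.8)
The plan is to derive this from Theorem \ref{convergence-Alg1}(ii) by verifying, for the loss $f=l\circ A$, that the relevant potential function has the KL property with exponent $\tfrac12$. Let $x^*$ be an accumulation point of $\{x^k\}$, which exists by Proposition \ref{alg1prop} (resp.\ Proposition \ref{alg2prop}); by Theorem \ref{gspthm1}/Theorem \ref{g0pthm2} together with Proposition \ref{tpequt}, $x^*$ is a $\nu$-strong local minimizer of (\ref{g0p}), hence a global minimizer of $f$ on the polyhedral set $\check\Omega=\{x\in\Omega: x_j=0,\ \forall j\in\mathcal{A}(x^*)\}$, and for $k$ large the iteration is governed by the reduced subproblems (\ref{alg01})/(\ref{alg02}) on $\check\Omega$. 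Thus, by Theorem \ref{convergence-Alg1}, it suffices to show that $f+\delta_{\check\Omega}$ (for Algorithm \ref{g0palg1} with $N=0$) and $H$ (for Algorithm \ref{g0palg2}) are KL functions and have KL property at $x^*$, resp.\ at $(x^*,x^*)$, with exponent $\tfrac12$; the three convergence-rate items then follow, and item (ii) with $\alpha=\tfrac12$ yields R-linear convergence and finite length.

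The core step will be to certify the exponent $\tfrac12$ for $f+\delta_{\check\Omega}$. Here $\check\Omega$ is the intersection of a box with a coordinate subspace, hence polyhedral, and $f=l\circ A$ is the composition of a strongly convex, twice continuously differentiable $l$ with a linear map. For convex problems of the form $\min_x\{l(Ax)+\delta_X(x)\}$ with $l$ strongly convex and smooth and $X$ polyhedral, the minimizer set $\mathcal{X}^*$ is polyhedral, $A$ is constant on $\mathcal{X}^*$, and the Luo--Tseng error bound holds: there are $\kappa,\epsilon>0$ with $\mathrm{dist}(x,\mathcal{X}^*)\le\kappa\,\mathrm{dist}(\bm 0,\partial(f+\delta_{\check\Omega})(x))$ whenever $x\in\check\Omega$ and $\mathrm{dist}(x,\mathcal{X}^*)\le\epsilon$. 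Combining this with the quadratic growth $f(x)-f(x^*)\ge c\,\mathrm{dist}(x,\mathcal{X}^*)^2$ near $\mathcal{X}^*$ (inherited from the strong convexity of $l$ via a Hoffman-type argument on the polyhedron) gives the KL inequality with $\varphi(t)=a_0t^{1/2}$ at minimizers; at non-critical points $\mathrm{dist}(\bm 0,\partial(f+\delta_{\check\Omega})(\cdot))$ is bounded away from $0$ nearby, so the inequality is immediate. Hence $f+\delta_{\check\Omega}$ is a KL function with exponent $\tfrac12$.

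For the potential $H$ of Algorithm \ref{g0palg2}, I would reduce to the same class by writing $H(x,y)=\Phi(Ax,\,x-y)+\delta_{\check\Omega\times\mathbb{R}^n}(x,y)$ with $\Phi(u,v):=l(u)+\tfrac{L_s}{2}\|v\|^2$; then $\Phi$ is strongly convex and twice continuously differentiable, $(x,y)\mapsto(Ax,x-y)$ is linear, and $\check\Omega\times\mathbb{R}^n$ is polyhedral, so the argument of the previous paragraph applies verbatim and shows $H$ is a KL function with exponent $\tfrac12$, in particular at $(x^*,x^*)$. Plugging this into Theorem \ref{convergence-Alg1}(ii) finishes the proof: $\{x^k\}$ converges R-linearly to the $\nu$-strong local minimizer $x^*$ of (\ref{g0p}) and $\sum_{k\ge0}\|x^{k+1}-x^k\|<\infty$.

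The hard part is the second paragraph, i.e.\ establishing the KL exponent $\tfrac12$ for the composite-plus-polyhedral-indicator function. The delicate points are that $A$ may be rank deficient, so $f$ itself is not strongly convex and one cannot simply invoke ``strong convexity $\Rightarrow$ exponent $\tfrac12$'', and that the exponent must be certified relative to the entire (polyhedral) solution set rather than an isolated minimizer; both are handled by the Luo--Tseng error bound for $\min_x\{l(Ax)+\delta_X(x)\}$ and the attendant exponent-calculus/Hoffman-bound machinery. Everything else is bookkeeping on top of the reductions already established in Proposition \ref{tpequt} and Theorems \ref{gspthm1}--\ref{g0pthm2}.
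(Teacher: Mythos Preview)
Your proposal is correct and follows essentially the same route as the paper: verify that $f+\delta_{\check\Omega}$ and $H$ have KL exponent $\tfrac12$, then invoke Theorem \ref{convergence-Alg1}(ii). The paper simply cites Corollary 5.1 of Li--Pong (2018) for $f+\delta_{\check\Omega}$ (exactly the composite-plus-polyhedral-indicator result whose Luo--Tseng/Hoffman underpinning you sketch), and then Theorem 3.6 of the same reference to lift the exponent from $f+\delta_{\check\Omega}$ to $H(x,y)=f(x)+\tfrac{L_s}{2}\|x-y\|^2+\delta_{\check\Omega}(x)$ at diagonal points. Your treatment of $H$ is a mild but nice variation: instead of appealing to a lifting lemma, you recast $H$ as $\Phi(Ax,x-y)+\delta_{\check\Omega\times\mathbb{R}^n}(x,y)$ with $\Phi$ strongly convex and $C^2$ and the constraint polyhedral, so the same composite argument applies directly. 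Both approaches land in the same place with comparable effort; yours is slightly more self-contained, the paper's slightly more modular.
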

\begin{proof}
Since $\delta_{\check{\Omega}}$ is a proper closed polyhedral function, by Corollary 5.1 in \cite{Li2018}, we obtain that $f+\delta_{\check{\Omega}}$ is a KL function with exponent $\frac{1}{2}$. Further, by Theorem 3.6 in \cite{Li2018}, $H$ has KL property at each point in $\{(x,x):x\in\check{\Omega}\}$ with exponent $\frac{1}{2}$. Then, by Theorem \ref{convergence-Alg1}, the results in this proposition holds.
\end{proof}
\begin{remark}
In Proposition \ref{con-Alg1}, the assumptions on $l$ are general enough to cover the loss functions for linear, logistic and Poisson regression, where $l(y)=\|y-b\|^2$ with $b\in\mathbb{R}^n$, $l(y)=\sum_{i=1}^m\log(1+\exp(-b_iy_i))$ with $b\in\{-1,1\}^m$ and $l(y)=\sum_{i=1}^m(-b_iy_i+\exp(y_i))$ with $b\in\mathbb{N}^m$.
\end{remark}

\begin{remark}
For the simplicity of parameters, we use the capped-$\ell_1$ function with the same parameter $\nu$ to relax both the element-wise sparsity term $\mathcal{I}(|x_i|)$ and group-wise sparsity term $\mathcal{I}(\|x_{(l)}\|)$ in (\ref{g0p}). Based on the theoretical analysis throughout this paper, for $\mathcal{I}(|x_i|)$, the parameter $\nu$ in its capped-$\ell_1$ relaxation needs to satisfy Assumption \ref{gsassum}, and for $\mathcal{I}(\|x_{(l)}\|)$, the parameter in its relaxation can be any positive constant no larger than that for $\mathcal{I}(|x_i|)$. In particular, for the sw-d-stationary point of (\ref{g0ps}), the value of $\nu$ in its $\nu$-lower bound property is decided by the parameter $\nu$ in capped-$\ell_1$ relaxation for $\mathcal{I}(|x_i|)$.
\end{remark}

\section{Numerical Experiments}\label{section6}
In this section, we show some numerical performance of Algorithm \ref{g0palg1} and Algorithm \ref{g0palg2} under the theoretical satisfaction of parameters in the algorithms. All our computational results are obtained by running MATLAB 2016b on a MacBook Pro (2.30 GHz, 8.00 GB RAM). In the following, Algorithm \ref{g0palg1} and Algorithm \ref{g0palg2} are denoted as Alg.\ref{g0palg1} and Alg.\ref{g0palg2}. For the original vector $x\in\mathbb{R}^n$ and recovered vector $y\in\mathbb{R}^n$, let MSE$=\|y-x\|^2/n$ denote the mean squared error of $y$ to $x$ and PSNR$=-10\lg$MSE denote the peak signal-to-noise ratio of $y$ to $x$.

The stopping criterion is set as $|F(x^k)-F(x^{k-1})|\leq10^{-15}$ and $10^{-7}$ in Subsections \ref{num-exp1} and \ref{num-exp2}, respectively. In Alg.\ref{g0palg1} and Alg.\ref{g0palg2}, we set $\bar{\alpha}=10^4,\underline{\alpha}=10^{-4},N=1,\alpha_k^B=c=L_s/4,\rho=2,\beta=0$, $\nu=0.99\cdot\min\{\lambda_1/L_f,\vartheta\}$ and $\mu_k=\max\{\bar{\mu}_k,\nu\}$ with the subsequent given sequence $\{\bar{\mu}_k\}$. When $f=\|Ax-b\|^2$, we let $L_s=2\|A^\top A\|$ and use the following MATLAB codes to generate $L_f$.\\
{\tt{t=abs(A)'*(abs(A)*ones(n,1));\\
Lf=2*max([norm($\vartheta$*t-A'*b,inf),norm(-$\vartheta$*t-A'*b,inf)]);}}

\subsection{Signal Recovery}\label{num-exp1}
In this experiment, we test the effectiveness of Alg.\ref{g0palg1} and Alg.\ref{g0palg2} in restoring the signals with noise. We use the same settings as in Example 2 of \cite{Bian2020} to randomly generate the original signal $x^\star$ with $\|x^\star\|_0=s$, sensing matrix $A\in\mathbb{R}^{m\times n}$, and observation $b\in\mathbb{R}^m$ for positive integers $n$, $m$ and $s$. The MATLAB codes to generate the data are as follows.\\
{\tt{index=randperm(n); index=index(1:s); $x^\star$=zeros(n,1); B=randn(n,m);\\
$x^\star$(index)=unifrnd(2,10,[s,1]); A=orth(B)'; b=A*$x^\star$+0.01*randn(m,1);}}

For restoring the signals with noise, we use Alg.\ref{g0palg1} with $\bar\mu_k=4-k/4$ and Alg.\ref{g0palg2} with $\bar\mu_k=4-k/10$ to solve the following model with $\lambda_1=1$,
\begin{equation}\label{g0pep1}
\min_{x\in[0,10]^n}~\|Ax-b\|^2+\lambda_1\|x\|_0.
\end{equation}

Firstly, we use the dimensions $n=160$, $m=80$, $s=16$ and initial point $x^0=1.97\cdot\textbf{1}$ (same as in Example 2 of \cite{Bian2020}) to generate $A$, $b$, $x^\star$ and $x^0$. Further, we test the effectiveness of Alg.\ref{g0palg1} and Alg.\ref{g0palg2} in high dimensions. We keep the above ratios of $n,m,s$ with $n=1600, 16000$ and use the above initial point. 

In all numerical experiments of this subsection, we randomly generate ten sets of data for numerical comparison and show the average values in the following tables. For Alg.\ref{g0palg1} and Alg.\ref{g0palg2}, denote $k_1$ and $k_2$ as the average numbers of outer loop iterations before termination, respectively. Let $m_k$ be the average number of inner loop iterations of Alg.\ref{g0palg1} in the $k$th test and define $\underline{k}_1:=(\sum_{k=1}^{10}m_k)/10$. Denote Time(s) as the average CPU runtime, MSE as the average MSE of the output solutions, dist as the average Euclidean distance between the output solutions and their previous iterates, nnz as the average number of the non-zero entries in the output solutions, respectively.

From Table \ref{g0ptb1}, it can be seen that Alg.\ref{g0palg1} and Alg.\ref{g0palg2} have similar experimental results regardless of the values of $n$ and the output iterates of the two algorithms have the same number of non-zero entries. For comparing the proposed two algorithms, since the average iterations of the inner loop of Alg.\ref{g0palg1} is close to 2, we regard the output solution by the $k_1$th outer loop as the $k$th output with $k=2k_1$. Fig.\ref{g0pdF} presents the convergence of of $|F(x^{k})-F(\bar{x}^1)|$ by Alg.\ref{g0palg1} and $|F(x^{k})-F(\bar{x}^2)|$ by Alg.\ref{g0palg2} in the cases of $n=160$, $n=1600$ and $n=16000$, where $\bar{x}^1$ and $\bar{x}^2$ are the output solutions of Alg.\ref{g0palg1} and Alg.\ref{g0palg2}, respectively, where the curves of $k^{-1}$, $k^{-2}$ and $k^{-3}$ are used to evaluate the convergence rates on the objective function values. From Fig.\ref{g0psls}, we see that the output signals of Alg.\ref{g0palg1} and Alg.\ref{g0palg2} almost coincide with the original signals and satisfy the $\nu$-lower bound property. Moreover, based on Definition \ref{nustronglocal} and Proposition \ref{g0plocalcondition}, by Fig.\ref{g0pdFx}, we can see that the iterates of Alg.\ref{g0palg1} and Alg.\ref{g0palg2} tend to the $\nu$-strong local minimizer of (\ref{g0p}).
\begin{table}
	\renewcommand{\arraystretch}{1}
	\caption{Numerical results under different dimensions of problem (\ref{g0pep1})}
	\label{g0ptb1}
	\centering \begin{tabular}{|c||c|c|c|c|c|}
		\hline
		Alg.\ref{g0palg1}&$k_1(\underline{k}_1)$& Time(s) & MSE & dist & nnz\\
		\hline
		$n=160$ &$42.0(1.9)$& $0.01$& $1.87\times10^{-5}$ & $6.14\times10^{-8}$& $16$\\
		\hline
		$n=1600$ &$42.5(1.9)$& $0.09$& $2.18\times10^{-5}$ & $2.00\times10^{-7}$& $160$\\
		\hline
		$n=16000$ &$42.8(1.9)$& $9.51$& $2.20\times10^{-5}$ & $5.27\times10^{-7}$& $1600$\\
		\hline\hline
		Alg.\ref{g0palg2}&$k_2$& Time(s) & MSE & dist & nnz\\
		\hline
		$n=160$&$83.0$& $0.01$& $1.87\times10^{-5}$ & $3.21\times10^{-8}$& $16$\\
		\hline
		$n=1600$&$89.3$& $0.13$& $2.18\times10^{-5}$ & $8.81\times10^{-8}$& $160$\\
		\hline
		$n=16000$&$90.8$& $13.77$& $2.20\times10^{-5}$ & $2.27\times10^{-7}$& $1600$\\
		\hline
	\end{tabular}
\end{table}
\begin{figure}
	\centering
	\subfigure[]{\includegraphics[width=2in]{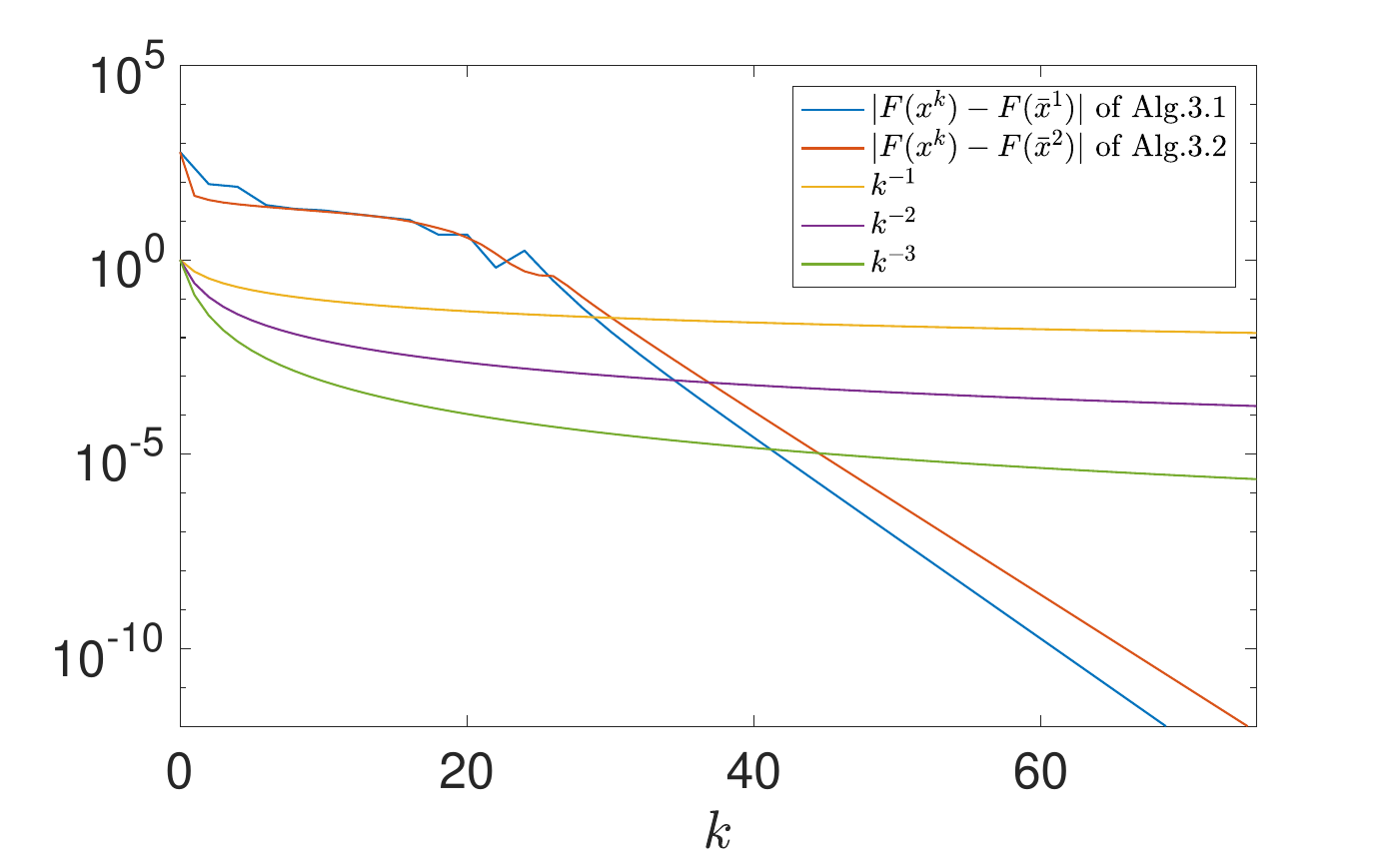}
		\label{fig_n160dF}}
	\hfil
	\subfigure[]{\includegraphics[width=2in]{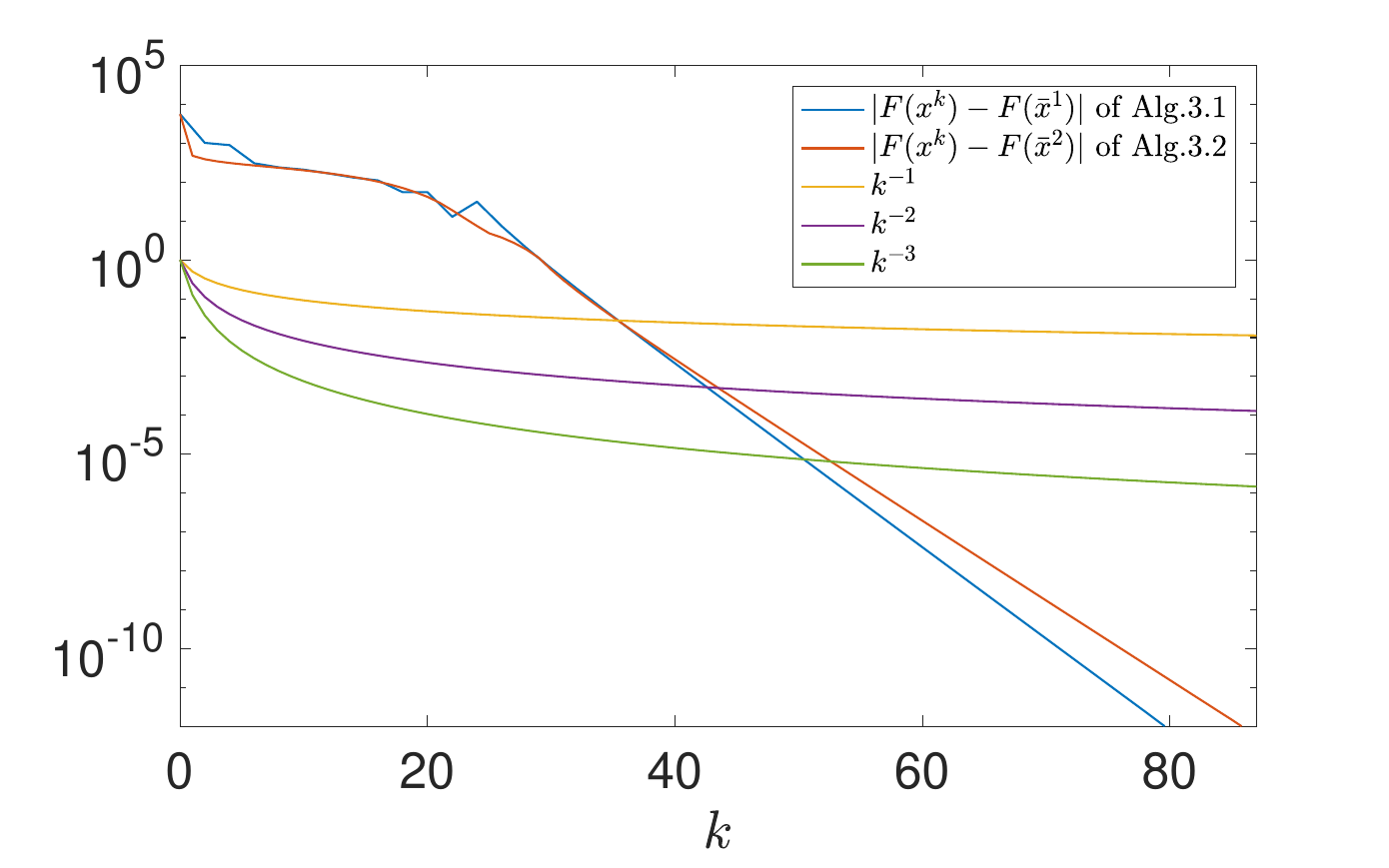}
		\label{fig_n1600dF}}
	\hfil
	\subfigure[]{\includegraphics[width=2in]{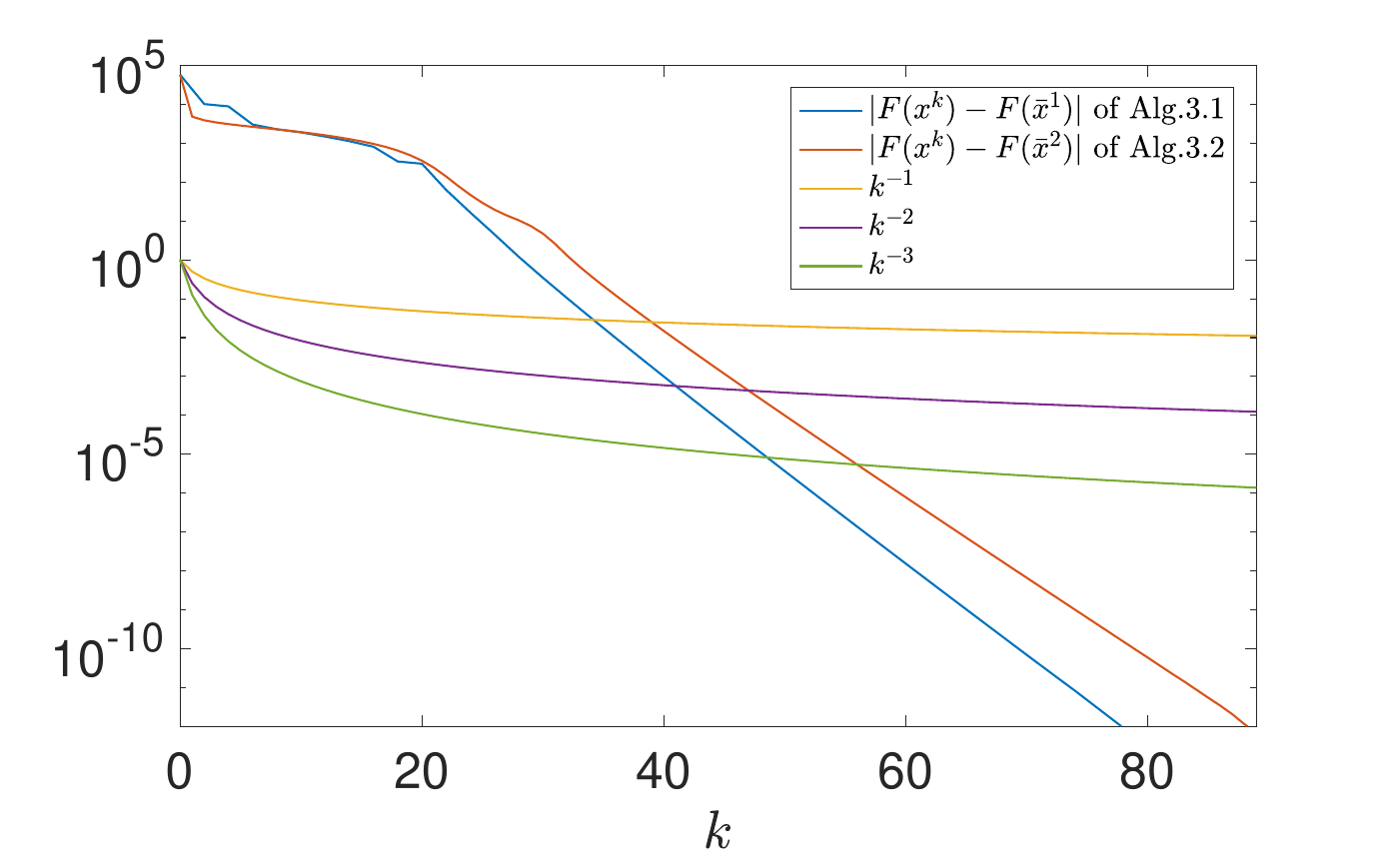}
		\label{fig_n16000dF}}
	\caption{$|F(x^{k})-F(\bar{x}^1)|$ by Alg.\ref{g0palg1}, $|F(x^{k})-F(\bar{x}^2)|$ by Alg.\ref{g0palg2}, $k^{-1}$, $k^{-2}$ and $k^{-3}$ (using logarithmic scale) against iteration $k$ in the cases of (a) $n=160$, (b) $n=1600$ and (c) $n=16000$.}\label{g0pdF}
\end{figure}
\begin{figure}
	\centering
	\subfigure[]{\includegraphics[width=2in]{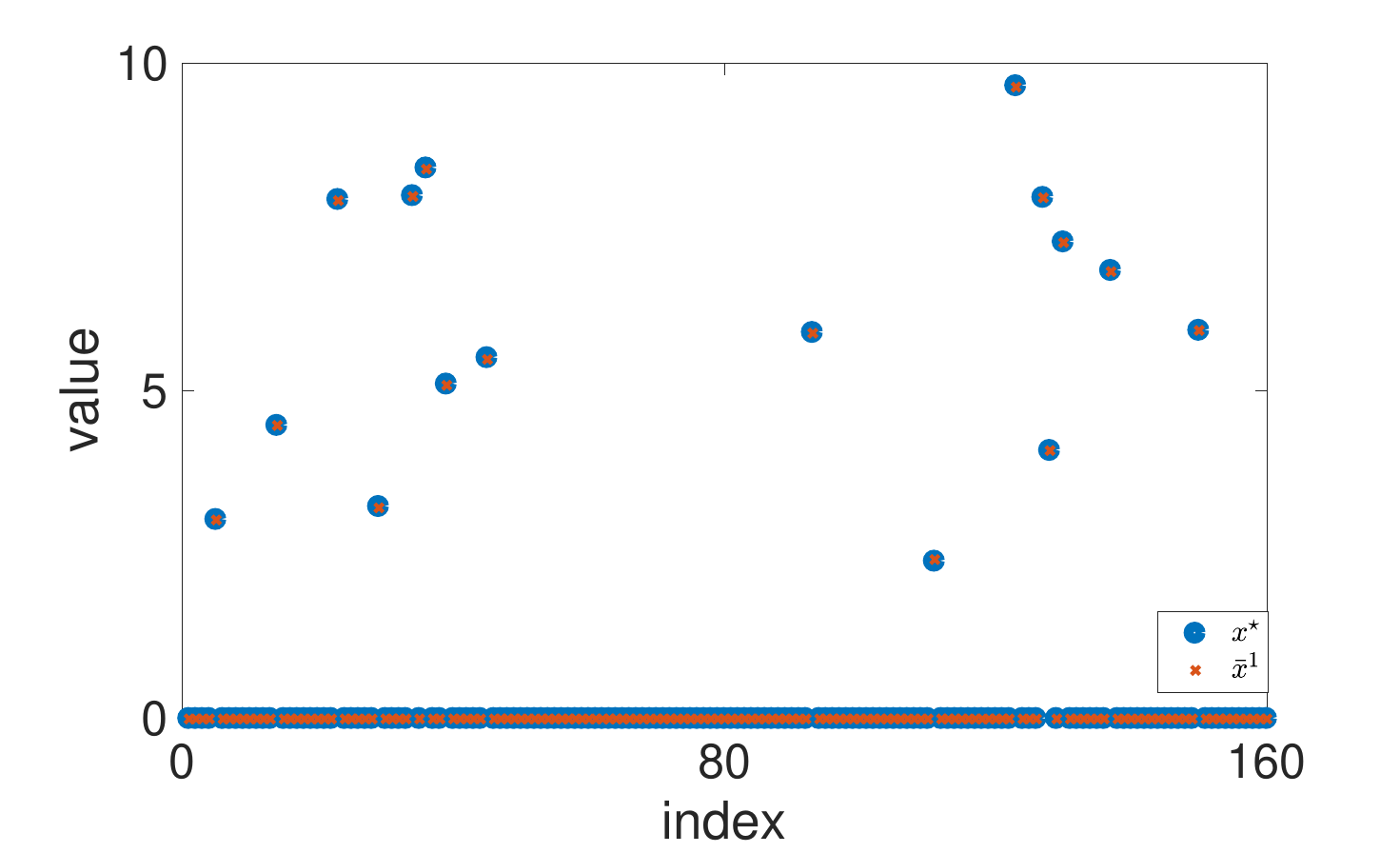}
		\label{fig_first_case}}
	\hfil
	\subfigure[]{\includegraphics[width=2in]{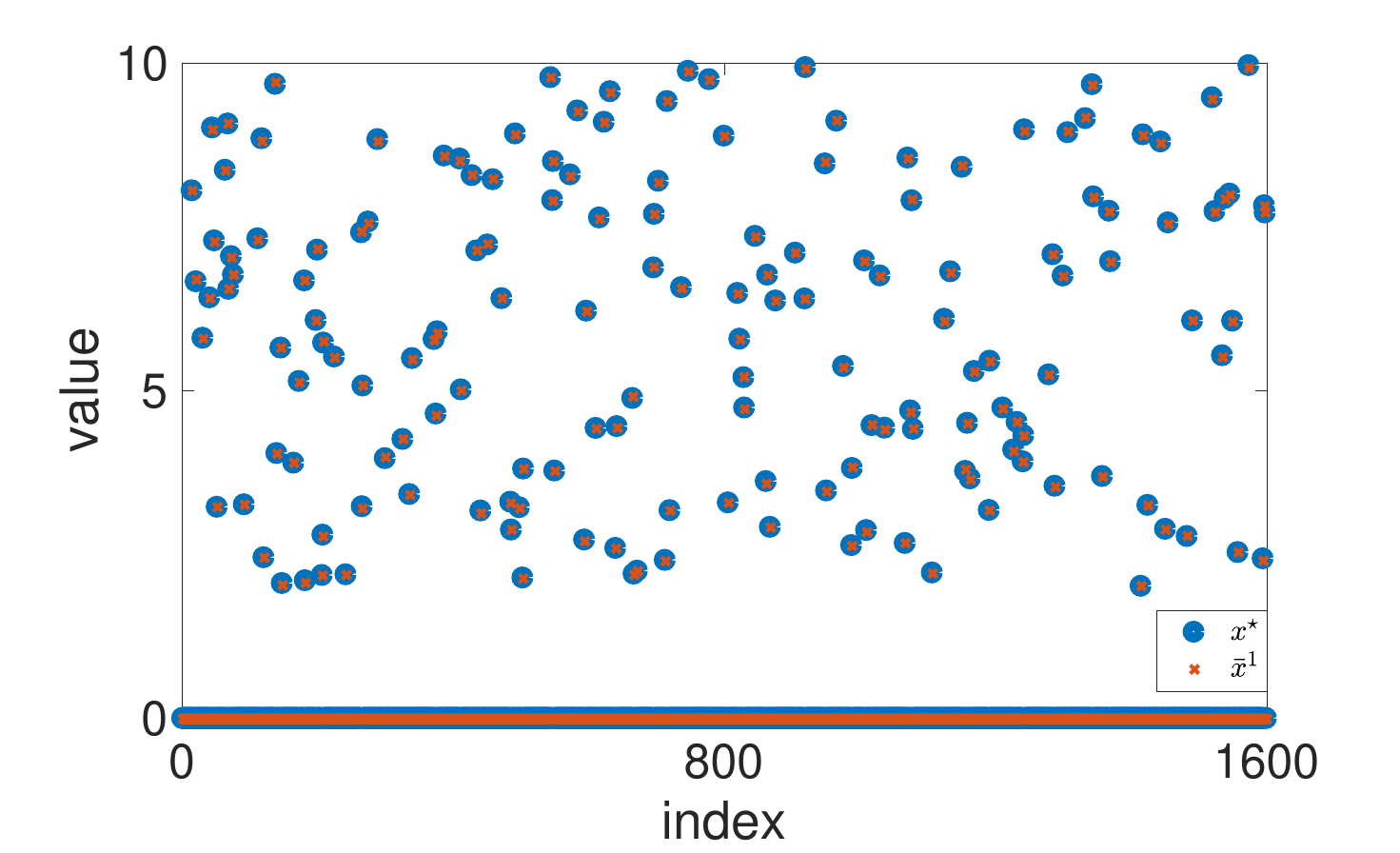}
		\label{fig_first_case2}}
	\hfil
	\subfigure[]{\includegraphics[width=2in]{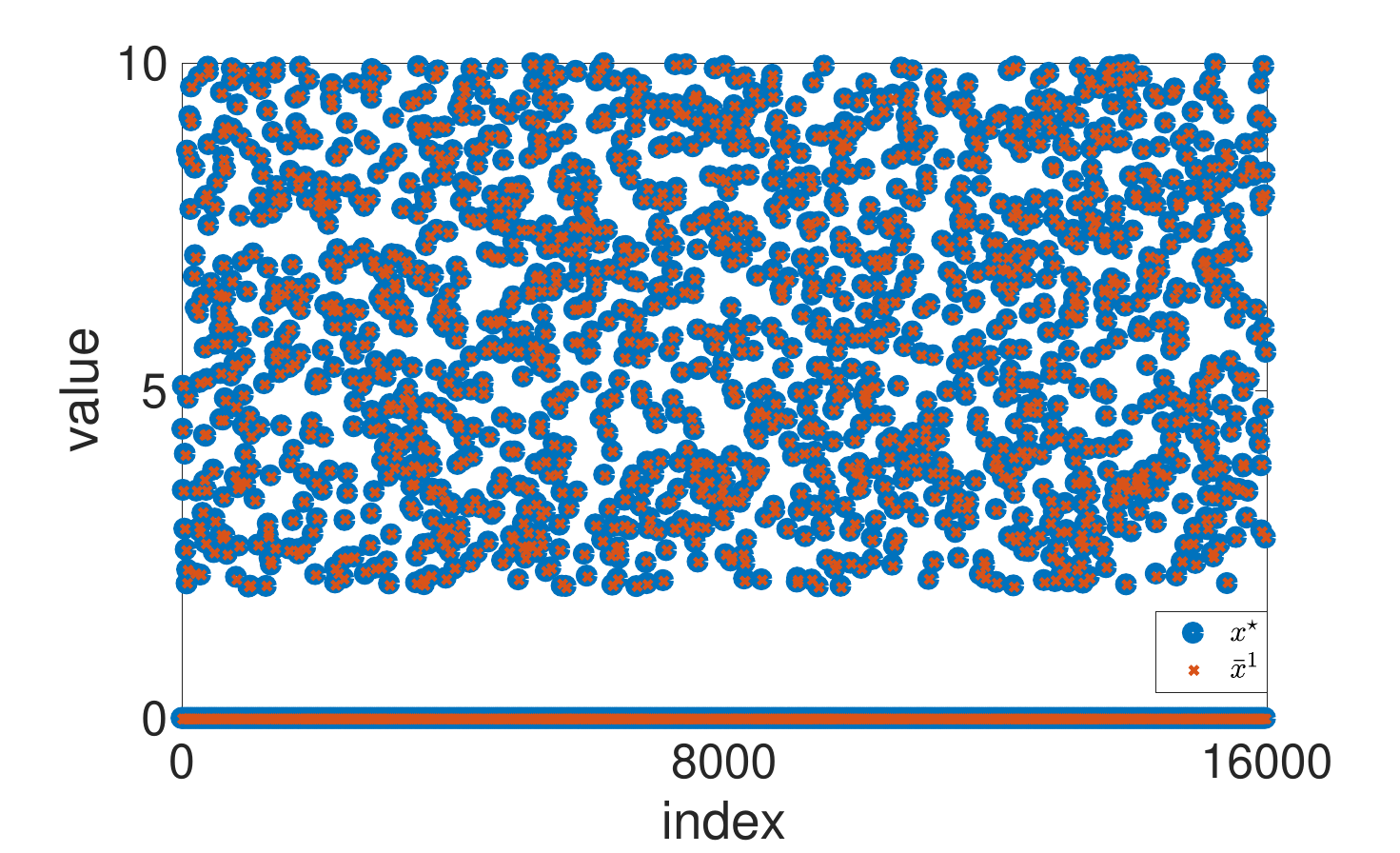}
		\label{fig_first_case3}}
	\hfil
	\subfigure[]{\includegraphics[width=2in]{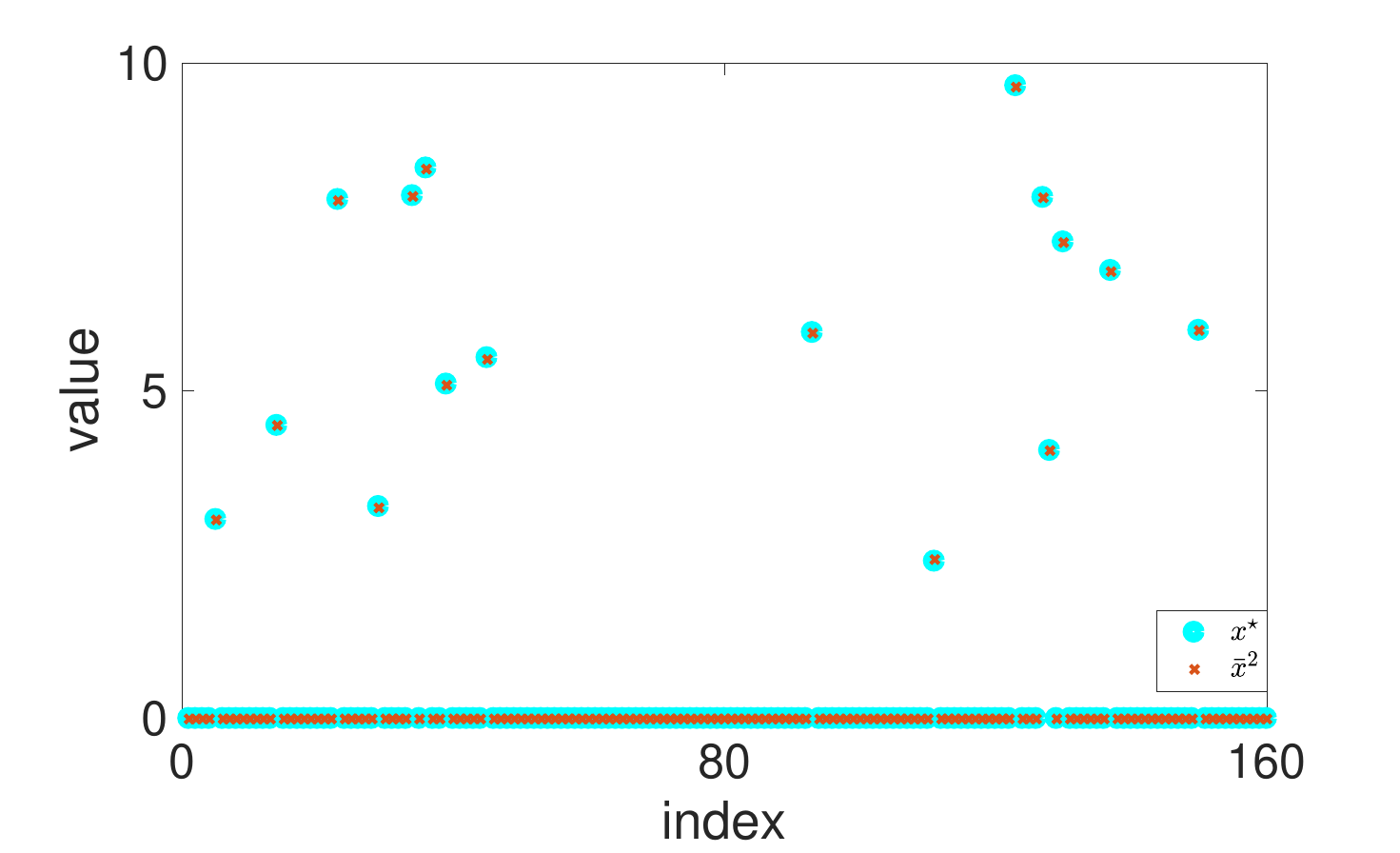}
		\label{fig_second_case}}	
	\hfil
	\subfigure[]{\includegraphics[width=2in]{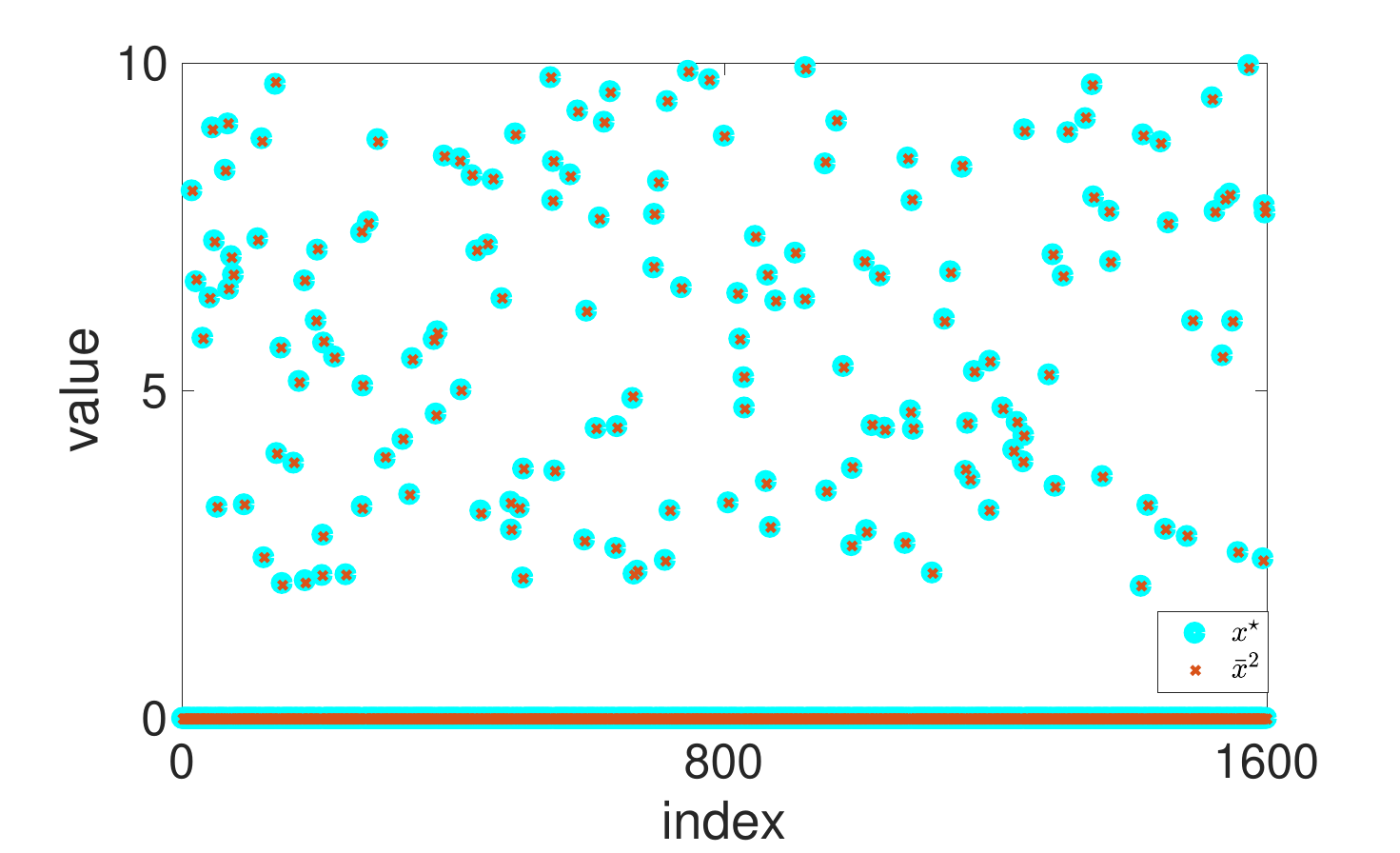}
		\label{fig_second_case2}}
	\hfil
	\subfigure[]{\includegraphics[width=2in]{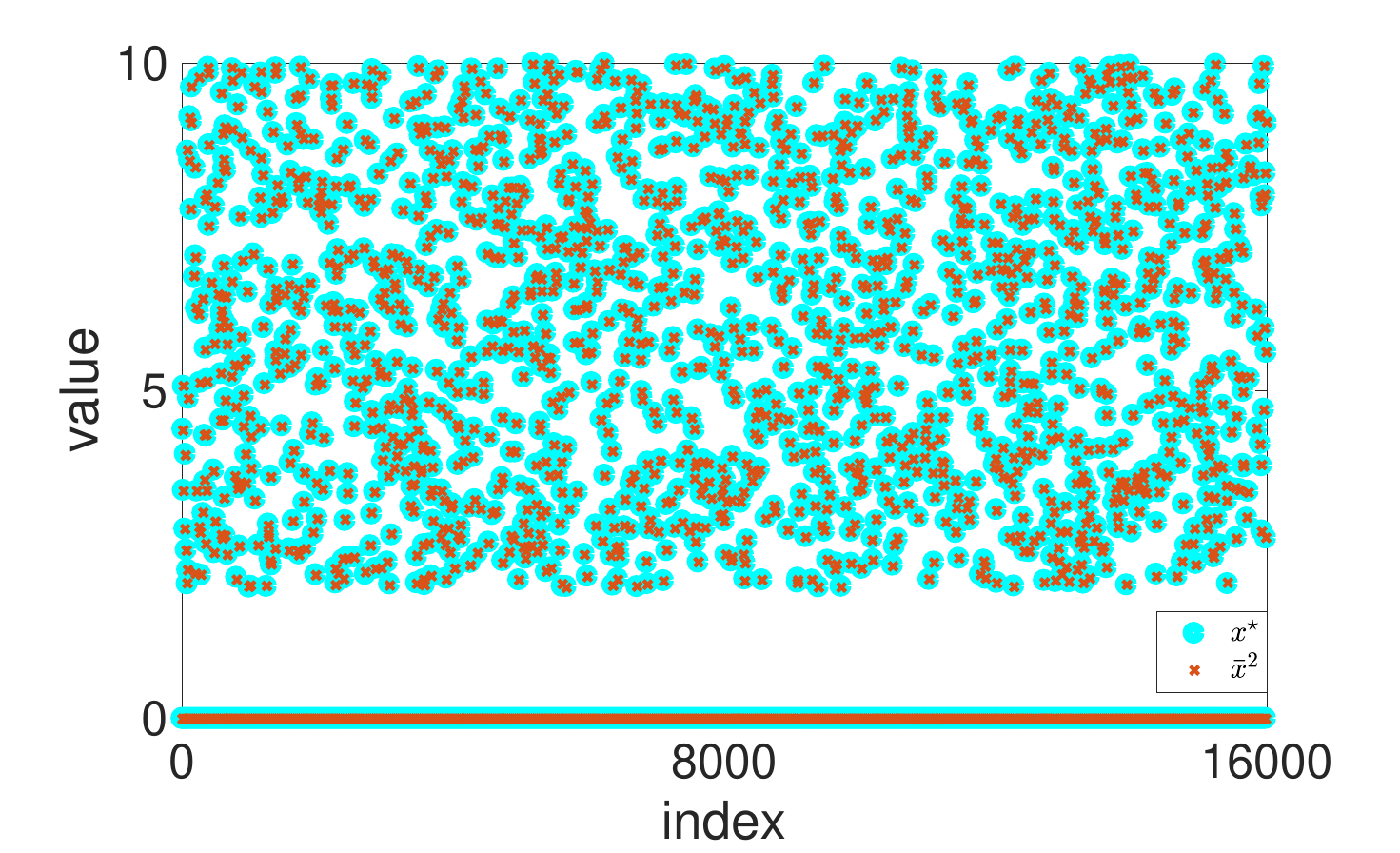}
		\label{fig_second_case3}}
	\caption{(a) (b) (c): Original signal $x^\star$ and restored signal $\bar{x}^1$ output by Alg.\ref{g0palg1}; (d) (e) (f): original signal $x^\star$ and restored signal $\bar{x}^2$ output by Alg.\ref{g0palg2} with $n=160,1600$ and $16000$, respectively.}\label{g0psls}
\end{figure}
\begin{figure}[h!]
	\centering
	\subfigure[]{\includegraphics[width=2in]{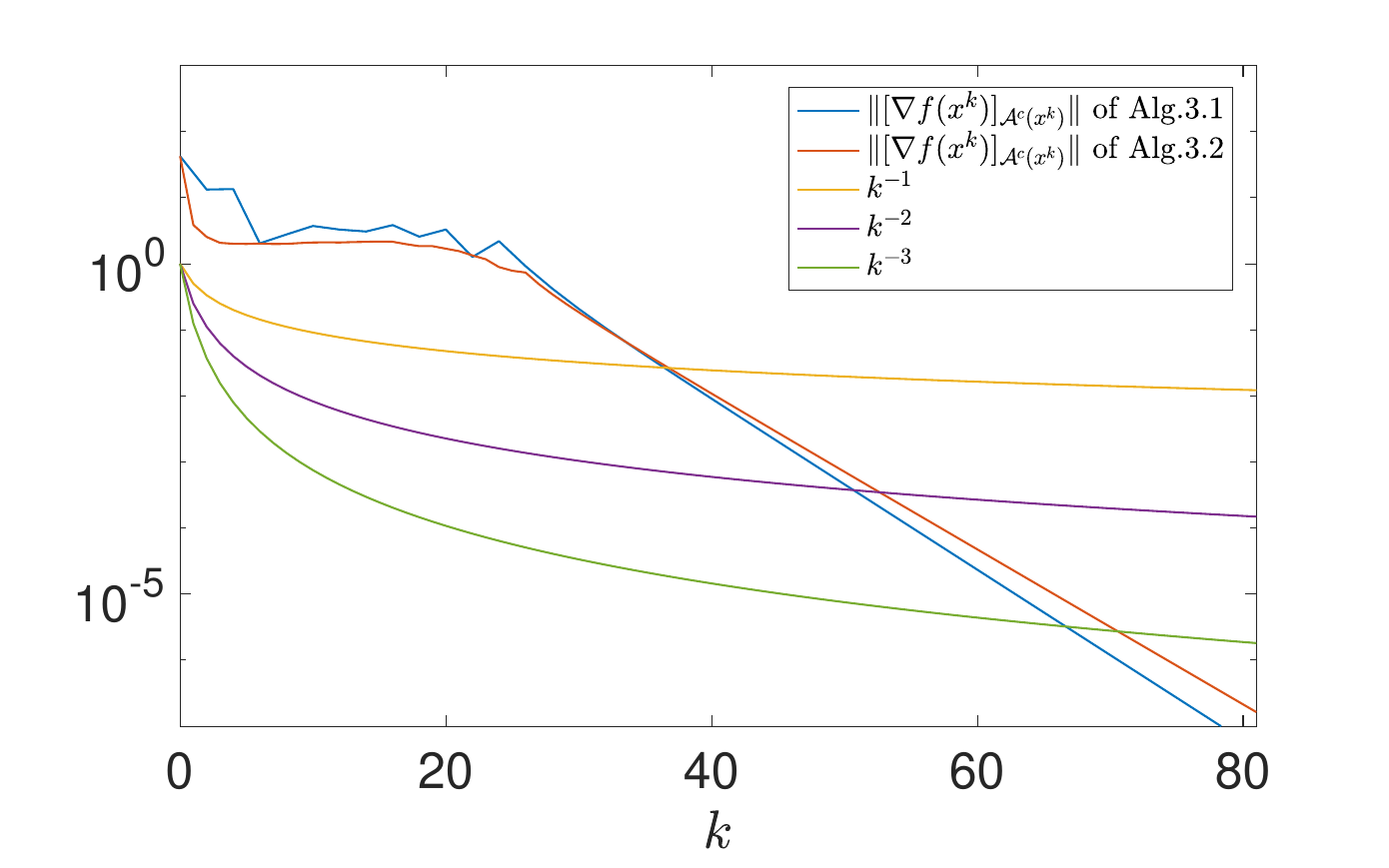}
		\label{fig_n160dFx}}
	\hfil
	\subfigure[]{\includegraphics[width=2in]{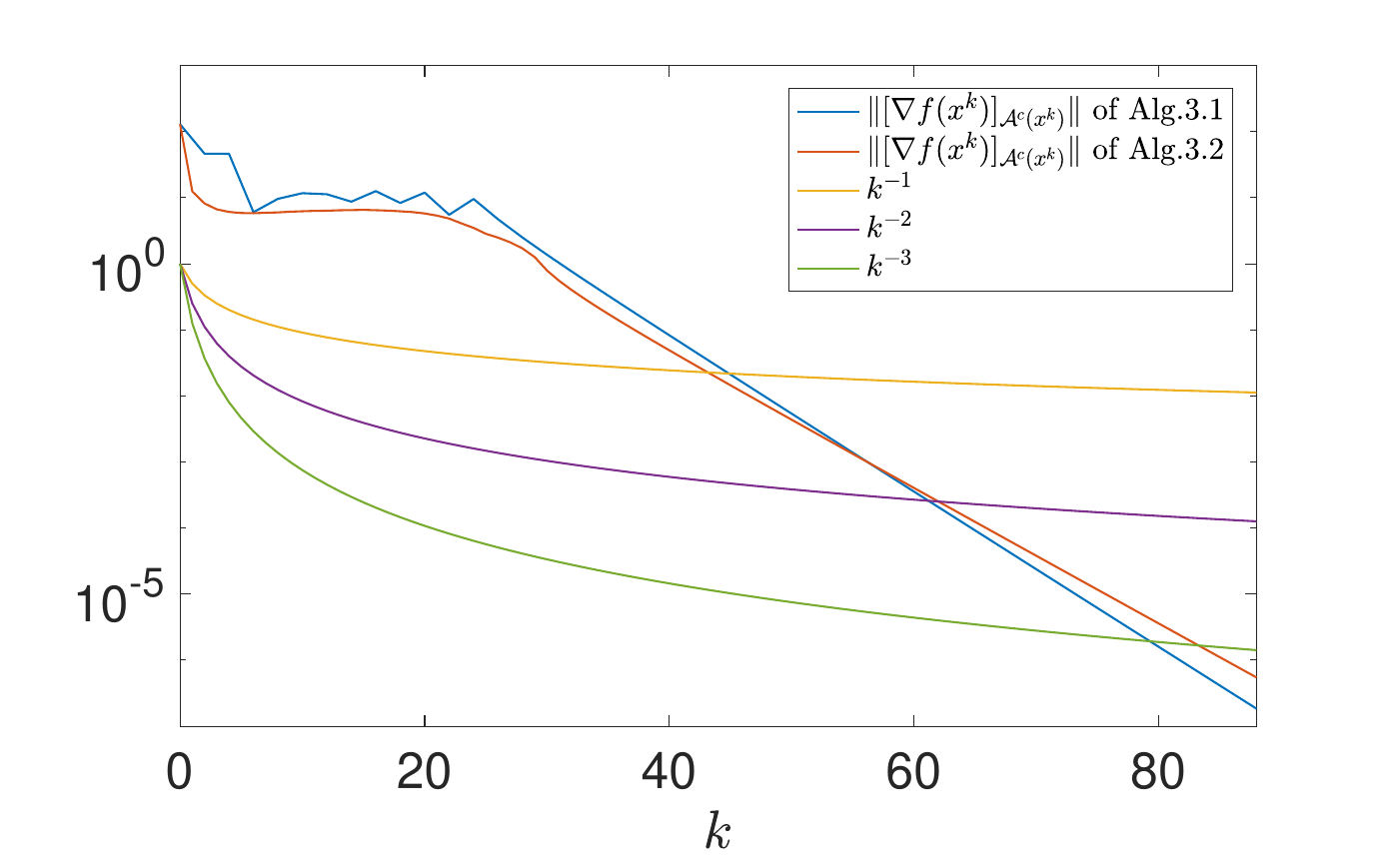}
		\label{fig_n1600dFx}}
	\hfil
	\subfigure[]{\includegraphics[width=2in]{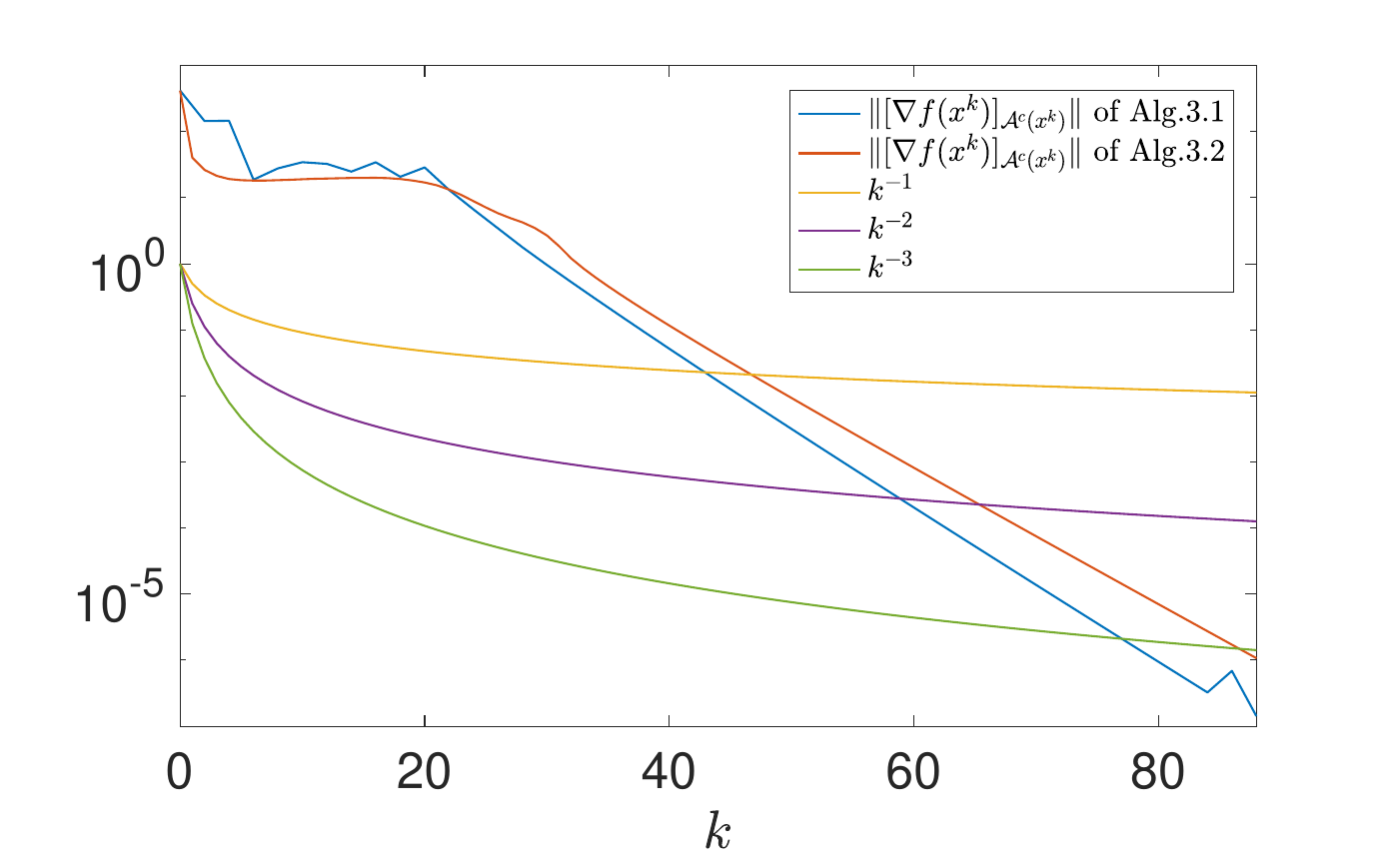}
		\label{fig_n16000dFx}}
	\caption{$\|[\nabla f(x^{k})]_{\mathcal{A}^c(x^{k})}\|$ by Alg.\ref{g0palg1} and Alg.\ref{g0palg2}, $k^{-1}$, $k^{-2}$ and $k^{-3}$ (using logarithmic scale) against iteration $k$ in the cases of (a) $n=160$, (b) $n=1600$ and (c) $n=16000$.}\label{g0pdFx}
\end{figure}

Considering the similar results of Alg.\ref{g0palg1} and Alg.\ref{g0palg2} shown in Table \ref{g0ptb1}, we will keep $n=1600,m=800,s=160$ and use $\alpha_k^B=L_s/8$, $L_s/15$ to further analyze the numerical performance of Alg.\ref{g0palg1} with different expressions of $\mu_k$, initial points, noise types and noise levels, respectively. In the following subsections, the parameters not mentioned are the same as those set above.

\subsubsection{Different expressions of $\mu_k$}\label{munes}
Set $\bar\mu_k=M-k/5$ with $M=0,4,20$ and $50$. The magnitude of $L_f$ is $10^4$. When $M=0$, we have $\bar\mu_k=-k/5$ and hence $\mu_k=\nu,\forall k\geq0$. Alg.\ref{g0palg1} takes $20,100,250$ outer loop iterations to reach $\bar{\mu}_k\leq\nu$ when $M=4,20,50$, respectively. From Table \ref{g0ptb2}, we can see that when $M=0$, Alg.\ref{g0palg1} with both $\alpha_k^B=L_s/8$ and $L_s/15$ fails in the signal recovery, but when $M=4,20$ and $50$, the algorithm has consistent good recovery results. Therefore, for the case with a small $\nu$, the finite dynamic update of $\mu_k$ is conducive for the iterates generated by Alg.\ref{g0palg1} with $\alpha_k^B=L_s/8$ and $L_s/15$ to converge to a $\nu$-strong local minimizer of (\ref{g0p}) that is close to the original signal. Moreover, we can see that the increase of $M$ for the values of $4, 20, 50$ leads to the growth in the number of outer loop iterations.
\begin{table}
	\renewcommand{\arraystretch}{1}
	\caption{MSEs and iterations of Alg.\ref{g0palg1} with $\alpha_k^B=L_s/8$ and $L_s/15$ for different $M$ in $\bar\mu_k$}
	\label{g0ptb2}
	\centering \begin{tabular}{|c||c||c|c|c|c|}
		\hline
		$\alpha_k^B$&$M$& $0$ & $4$ &  $20$ & $50$\\
		\hline\hline
		$L_s/8$&$k_1(\underline{k}_1)$ &$839.7(2.9)$& $44.5(2.9)$&  $123.5(2.9)$& $273.5(2.9)$\\
		\hline
		&MSE &$0.75$& $2.24\times10^{-5}$&  $2.24\times10^{-5}$& $2.24\times10^{-5}$\\
		\hline\hline
		$L_s/15$&$k_1(\underline{k}_1)$ &$159.6(3.9)$& $43.0(3.8)$&  $120.7(3.8)$& $270.7(3.7)$\\
		\hline
		&MSE &$1.11$& $2.24\times10^{-5}$&  $2.24\times10^{-5}$& $2.24\times10^{-5}$\\
		\hline
	\end{tabular}
\end{table}
\subsubsection{Different initialization}
Set initial point $x^0=\bm{-1},\bm{0},\bm{2}$ and a random point in $[-1,2]^n$. From Table \ref{g0ptb3}, we can see that Alg.\ref{g0palg1} with $\alpha_k^B=L_s/8$ and $L_s/15$ are insensitive to initializations, even the case that $x^0=\bm{-1}$, which is not in the feasible region. Moreover, by Table \ref{g0ptb2} and Table \ref{g0ptb3}, it can be seen that the inner loop of Alg.\ref{g0palg1} stops when $\alpha_k$ is close to $L_s/2$, which is consistent with Proposition \ref{innerloops}, and smaller than the lower bound of $L_s$ in the proof of Proposition \ref{innerloops}.
\begin{table}
	\renewcommand{\arraystretch}{1}
	\caption{MSEs and iterations of Alg.\ref{g0palg1} with $\alpha_k^B=L_s/8$ and $L_s/15$ for different initial points}
	\label{g0ptb3}
	\centering \begin{tabular}{|c||c||c|c|c|c|}
		\hline
		$\alpha_k^B$&$x^0$ & $\bm{0}$ &  $\bm{2}$ & Random& $\bm{-1}$\\
		\hline\hline
		$L_s/8$&$k_1(\underline{k}_1)$ &$40.1(2.9)$&  $42.1(2.9)$ & $41.6(2.9)$& $40.7(2.9)$\\
		\hline
		&MSE & $2.26\times10^{-5}$&  $2.26\times10^{-5}$& $2.26\times10^{-5}$&$2.26\times10^{-5}$\\
		\hline\hline
		$L_s/15$&$k_1(\underline{k}_1)$ &$37.9(3.8)$&  $41.3(3.9)$ & $59.1(3.8)$& $38.8(3.8)$\\
		\hline
		&MSE & $2.26\times10^{-5}$&  $2.26\times10^{-5}$& $2.26\times10^{-5}$&$2.26\times10^{-5}$\\
		\hline
	\end{tabular}
\end{table}
\subsubsection{Different noise types and noise levels}
In this part, we compare the numerical performance of Alg.\ref{g0palg1} with $\alpha_k^B=L_s/8$ and $L_s/15$ for problem (\ref{g0pep1}) under different noise types (Gaussian, Rayleigh, Gamma, Exponent, Uniform) and levels defined by $\sigma=0, 10^{-4}, 10^{-2}$. In generating $b=Ax^\star+\textsf{noise}$, the MATLAB codes of these noises are as follows.\\
{\tt{noise=$\sigma$*randn(m,1)(Gaussian);noise=$\sigma$*raylrnd(1,m,1)(Rayleigh);}}\\
{\tt{noise=$\sigma$*gamrnd(1,2,[m,1])(Gamma);noise=$\sigma$*exprnd(2,[m,1])(Exponent);}}\\
{\tt{noise=$\sigma$*unifrnd(0,2,[m,1])(Uniform);
}}

As can be seen from Table \ref{g0ptb6}, the less the noise level, the better the recovery effect of Alg.\ref{g0palg1}.
\begin{table}
	\renewcommand{\arraystretch}{1}
	\caption{MSEs of Alg.\ref{g0palg1} with $\alpha_k^B=L_s/8$ and $L_s/15$ for different noise types and noise levels}
	\label{g0ptb6}
	\centering \begin{tabular}{|c||c|c|c|c|c|}
		\hline
		$\alpha_k^B=\frac{L_s}{8}$ & Gaussian & Rayleigh & Gamma & Exponent & Uniform\\
		\hline
		$\sigma=0$ &$2.3\times10^{-18}$& $2.3\times10^{-18}$& $2.3\times10^{-18}$ & $2.3\times10^{-18}$& $2.3\times10^{-18}$\\
		\hline
		$\sigma=10^{-4}$ &$2.2\times10^{-9}$& $4.2\times10^{-9}$& $1.8\times10^{-8}$ & $1.8\times10^{-8}$& $2.9\times10^{-9}$\\
		\hline
		$\sigma=10^{-2}$ &$2.3\times10^{-5}$& $4.6\times10^{-5}$& $1.9\times10^{-4}$& $1.8\times10^{-4}$& $2.9\times10^{-5}$\\
		\hline\hline
		$\alpha_k^B=\frac{L_s}{15}$ &Gaussian & Rayleigh & Gamma & Exponent & Uniform\\
		\hline
		$\sigma=0$&$5.7\times10^{-18}$& $5.7\times10^{-18}$& $5.7\times10^{-18}$ & $5.7\times10^{-18}$& $5.7\times10^{-18}$ \\
		\hline
		$\sigma=10^{-4}$ &$2.2\times10^{-9}$& $4.2\times10^{-9}$& $1.8\times10^{-8}$ & $1.8\times10^{-8}$& $2.9\times10^{-9}$\\
		\hline
		$\sigma=10^{-2}$ &$2.3\times10^{-5}$& $4.6\times10^{-5}$& $1.9\times10^{-4}$& $1.8\times10^{-4}$& $2.9\times10^{-5}$\\
		\hline
	\end{tabular}
\end{table}
\subsection{Multichannel Image Reconstruction}\label{num-exp2}
In this experiment, we test the effectiveness of Alg.\ref{g0palg1} with different $N$ in recovering 2D images from compressive and noisy measurement \cite{Huang2009,Jiao2017}.

Consider the following group sparse $\ell_0$ regularized problem
\begin{equation}\label{g0pep2}
	\min_{x\in[-10,10]^n}~\|Ax-b\|^2+\lambda_1\|x\|_0+\lambda_2\sum_{l=1}^{n/3}\mathcal{I}(\|x_{(l)}\|_1).
\end{equation}

We adopt the same images and experimental setting of \cite{Jiao2017}. The original images have three channels. Each image is transformed into an $n$-dimensional vector $x^*$ and the pixels are grouped at the same position from three channels together. The observational data $b$ is generated by $b=Ax^*+\eta$, where all entries of $\eta$ follow an i.i.d. Gaussian distribution $N(0,\sigma^2)$. In the first image reconstruction, $A\in\mathbb{R}^{1152\times 6912}$ is a random Gaussian matrix, $\lambda_1=\lambda_2=10^{-1}$, $n=6912$ in (\ref{g0pep2}) and $\bar\mu_k=1-k/200$. In the second image reconstruction, $A\in\mathbb{R}^{3771\times 12288}$ is a composition of a partial FFT with an inverse wavelet transform and 6 levels of Daubechies wavelet, $\lambda_1=4\cdot10^{-4}$, $\lambda_2=4\cdot10^{-3}$, $n=12288$ in (\ref{g0pep2}) and $\bar\mu_k=1-k/1200$. We set the initial point $x^0=\bm{0}$ in Alg.\ref{g0palg1} for all cases.

We use group sparse $\ell_0$ regularized model (\ref{g0p}) and Alg.\ref{g0palg1} with $N=1,2,3$ to compare with four group sparse recovery models and algorithms, which are (i) least squares $\ell_{2,0}$ regularized model and GPDASC algorithm with the same continuation strategy along $\lambda$ in \cite{Jiao2017}; (ii) group OMP model and GOMP algorithm in \cite{Eldar2010}; (iii) group MCP model and GCD algorithm in \cite{Huang2012}; (iv) group Lasso model and SPGl1 algorithm in \cite{Van2008}. These four algorithms all rely on a reliable estimate of the noise level and the matrix $A$ needs to be normalized for the GPDASC algorithm. There is no regularization parameter in the group OMP, MCP and Lasso models. The MATLAB codes of SPGl1 algorithm are obtained from http://www.cs.ubc.ca/$\sim$mpf/spgl1/ and the MATLAB codes of the other algorithms, used in \cite{Jiao2017}, are obtained from http://www0.cs.ucl.ac.uk/staff/b.jin/software/gpdasc.zip. From Table \ref{g0ptb21}, it can be seen that Alg.\ref{g0palg1} with $N=1,2,3$ gives larger PSNR values than GPDASC, GOMP, GCD and SPGl1 for different noise levels, which implies that Alg.\ref{g0palg1} has better recovery performance. While GPDASC and SPGl1 give shorter runtimes, the PSNR values of their recovered images are clearly worse than those obtained by Alg.\ref{g0palg1}. Moreover, Alg.\ref{g0palg1} with $N=2$ has better recovery performance and less runtime than that with $N=1$ and $3$. The original images and restored images for the noisy case of $\sigma=0.02$ are presented in Fig.\ref{fig103} and Fig.\ref{fig02}.

\begin{table}
	\renewcommand{\arraystretch}{1}
	\caption{PSNRs and CPU time of Alg.\ref{g0palg1} with $N=1,2,3$, GCD, GPDASC, GOMP and SPGl1 for different noise levels}
	\label{g0ptb21}
	\centering \begin{tabular}{|c|c||c|c|c|c|c|}
		\hline
		$\sigma$&Test 1&Alg.\ref{g0palg1} with $N=1,2,3$& GCD & GPDASC &GOMP &SPGl1 \\
		\hline
		0.01&PSNR&$31.3~~~~31.3~~~~31.3$& $28.5$ & $28.5$& $28.5$& $22.9$\\
		&Time(s)&$1.3~~~~~1.3~~~~~1.3$& $21.0$ & $3.6$& $6.4$& $0.4$\\
		\hline
		0.02&PSNR&$31.2~~~~31.2~~~~31.2$& $22.5$ & $22.5$& $22.4$& $22.8$\\
		&Time(s)&$1.3~~~~~1.3~~~~~1.3$& $21.5$ & $3.7$& $6.7$& $0.6$\\
		\hline
		0.03&PSNR &$31.3~~~~31.3~~~~31.3$& $19.0$ & $19.0$& $18.5$& $22.5$\\
		&Time(s) &$1.1~~~~~1.1~~~~~1.1$& $20.9$ & $3.9$& $6.8$& $0.4$\\
		\hline\hline
		$\sigma$&Test 2&Alg.\ref{g0palg1} with $N=1,2,3$& GCD & GPDASC &GOMP &SPGl1 \\
		\hline
		0.01&PSNR&$42.8~~~~43.2~~~~42.9$& $42.6$& $42.1$ & $39.8$& $24.7$\\
		&Time(s)&$76.1~~~~75.3~~~~76.5$& $182.1$ & $41.7$& $168.0$& $5.1$\\
		\hline
		0.02&PSNR&$36.8~~~~37.1~~~~36.7$& $36.1$& $33.8$ & $33.7$& $23.7$\\
		&Time(s)&$77.9~~~~77.0~~~~77.8$& $160.6$ & $35.7$& $158.8$& $4.5$\\
		\hline
		0.03&PSNR &$32.0~~~~32.3~~~~32.2$& $31.9$ & $30.7$& $30.1$& $22.6$\\
		&Time(s) &$77.2~~~~76.0~~~~77.2$& $149.1$ & $36.0$& $145.1$ & $3.0$\\
		\hline
	\end{tabular}
\end{table}
\begin{figure}
	\centering
	\includegraphics[width=6.5in]{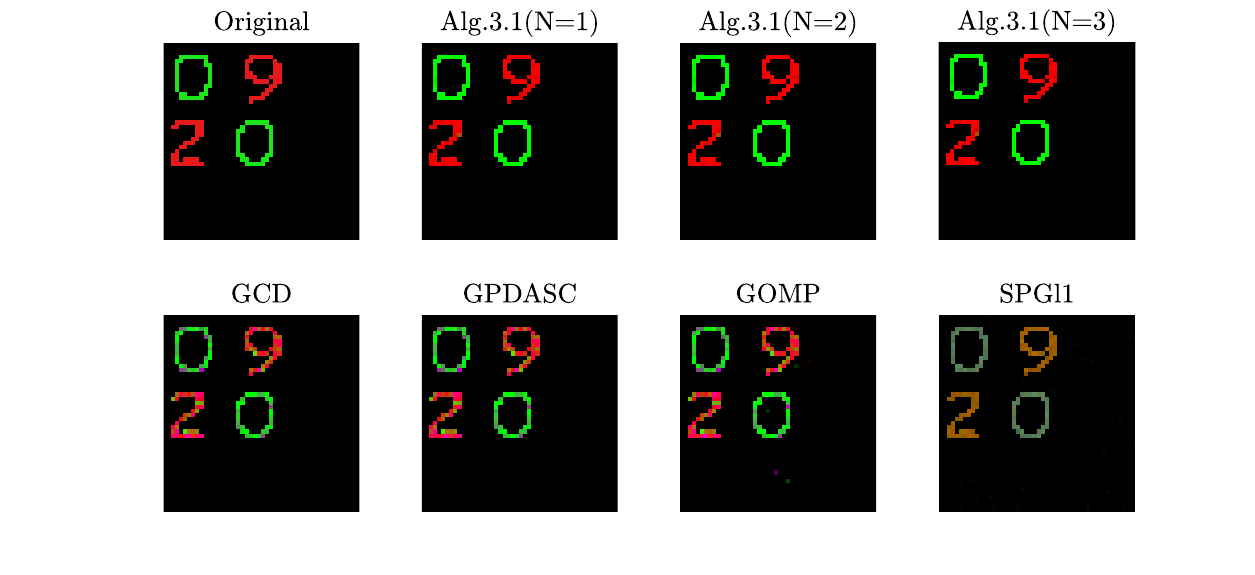}
	\caption{The first original image and restored images by Alg.\ref{g0palg1} with $N=1,2,3$, GCD, GPDASC, GOMP and SPGl1 when $\sigma=0.02$}\label{fig103}
\end{figure}
\begin{figure}
	\centering
	\includegraphics[width=6.5in]{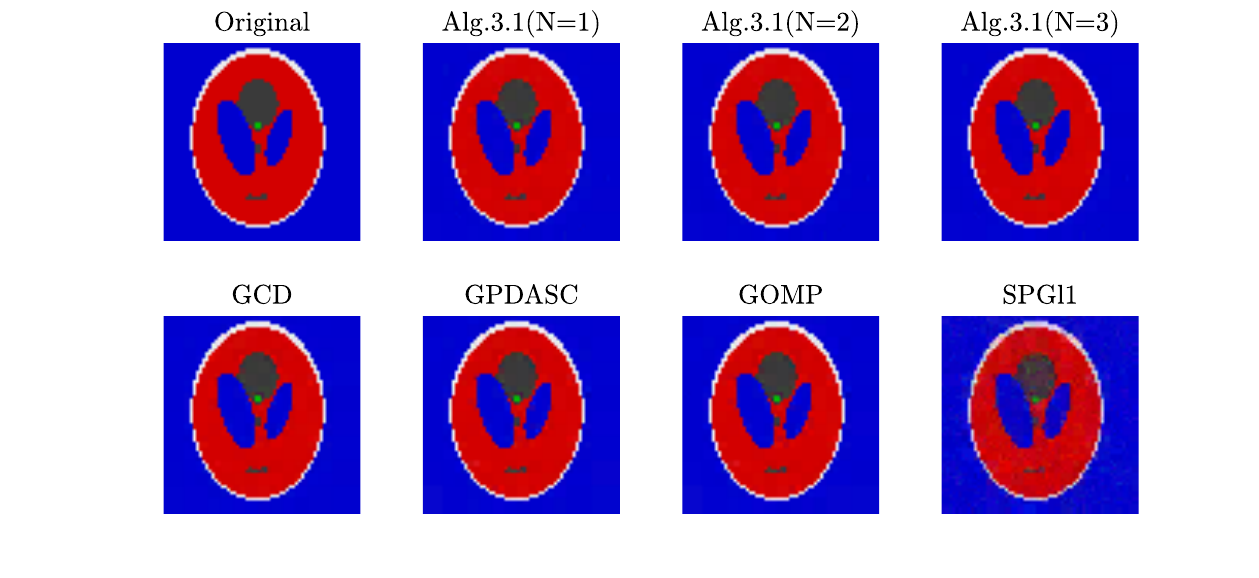}
	\caption{The second original image and restored images by Alg.\ref{g0palg1} with $N=1,2,3$, GCD, GPDASC, GOMP and SPGl1 when $\sigma=0.02$}\label{fig02}
\end{figure}
\section {Conclusions}\vspace{0.5ex}
In this paper, we designed two kinds of DC algorithms to solve a class of sparse group $\ell_0$ optimization problems modeled by (\ref{g0p}). We gave the relaxation model (\ref{g0ps}) of problem (\ref{g0p}) and proved their equivalence. Based on the DC structure of the relaxation problem, some of the existing DC algorithms can solve it but only get its critical point. We defined the sw-d-stationary points of the relaxation problem (\ref{g0ps}), which have stronger optimality conditions than its critical points and weak d-stationary points, and designed two DC algorithms with convergence to the defined sw-d-stationary points. Comparing with the existing DC algorithms for the sparse group optimization problems, we established the relationship between the obtained solution by the proposed DC algorithms and the considered problem (\ref{g0p}). We proved that any accumulation point of the iterates generated by the proposed algorithms is a local minimizer of problem (\ref{g0p}) and satisfies a lower bound property of its global minimizers. Moreover, we proved that all accumulation points have a common support set, their nonzero entries have a common lower bound and their zero entries can be attained within finite iterations. Further, we proved the global convergence and fast convergence rate of the proposed algorithms under the mild condition. Finally, we illustrated the theoretical results and showed the good performance of the proposed DC algorithms by some numerical experiments.

\section*{Acknowledgments}
The authors are grateful to the associate editor and the two anonymous referees for their comments and suggestions that substantially improved the quality of the paper.

\bibliographystyle{plain}
\bibliography{references}
\end{document}